\DeclareMathOperator{\transverse}{\cap\kern-7.75pt\top}
\DeclareMathOperator{\area}{Area}
\DeclareMathOperator{\argmax}{arg max}
\newcommand{\D}{\operatorname{D}}
\newcommand{\tc}[2]{T_{#1}(#2)}
\newtheorem{thm}{Theorem}
\newtheorem{cor}[thm]{Corollary}
\newtheorem{lem}[thm]{Lemma}
\newtheorem{rem}[thm]{Remark}
\newtheorem{clm}{Claim}
\newtheorem{dfn}[thm]{Definition}
\newcommand{\pd}[2]{\frac{\partial #1}{\partial #2}}
\newcommand{\disk}[2]{D(#1, #2)}
\newcommand{\cir}[2]{C(#1, #2)}
\newcommand{\shape}{\Omega}
\newcommand{\bd}{\partial \shape}
\newcommand{\tgl}{tangentially graph-like}
\newcommand{\tcgl}{tangent-cone graph-like}
\newcommand{\gm}[1]{\gamma (#1)}
\newcommand{\dgm}[1]{{\gamma'}(#1)}
\newcommand{\dgmm}{{\gamma'}}
\newcommand{\delep}{\delta_{\epsilon}}
\newcommand{\enu}{\epsilon_\nu}
\newcommand{\rhh}{\hat{\rho}}
\newcommand{\prff}{\noindent {\bf Proof: }\setcounter{clm}{0}}
\newcommand{\prfclm}[1]{\noindent {\bf Proof of Claim \ref{#1}: }}
\newcommand{\epf}{$\blacksquare \;\;$}
\newcommand{\epfclm}{$\Box \;\;$}
\title{Nonasymptotic densities for shape reconstruction}
\author{Sharif Ibrahim\footnote{Department of Mathematics, Washington State University, Pullman, WA 99164-3113, USA} \and Kevin Sonnanburg\footnote{Department of Mathematics, University of Tennessee Knoxville, Knoxville, TN 37996-1320}
\and Thomas J. Asaki\footnotemark[1]
\and Kevin R. Vixie\footnotemark[1]
}
\begin{document}

\maketitle
\begin{abstract}
In this work, we study the problem of reconstructing shapes from 
simple nonasymptotic densities measured only along shape boundaries.
The particular density we study is also known as the integral area invariant and corresponds to the 
area of a disk centered on the boundary that is also inside the shape.
It is easy to show uniqueness when these densities 
are known for all radii in a neighborhood of $r = 0$, but much 
less straightforward when we assume that we only know the area invariant and its derivatives for only one $r > 0$.
We present variations of uniqueness results for reconstruction (modulo translation and rotation) of polygons and (a dense set of) smooth curves under certain regularity conditions.
\end{abstract}

\section{Introduction}
\label{sec:intro}

This work discusses the integral area invariant introduced by Manay et al.\cite{manay-2004-1}, particularly with regard to reconstructability of shapes.
This topic has been considered previously by Fidler et al.\cite{fidler-2008-1}\cite{fidler-2012-1} for the case of star-shaped regions.
Recent results have shown local injectivity in the neighborhood of a circle \cite{bauer-2012-1} and for graphs in a neighborhood of constant functions \cite{calder-2012-1}.

The present work does not assume a star-shaped condition but does make use of a \tcgl{} condition which is local to the integral area circle.
We also present an interpretation of the integral area invariant as a nonasymptotic density.
This is based on a poster presented by the authors\cite{ibrahim-poster-2010}.

Our \tgl{} and \tcgl{} conditions (definitions \ref{def:tgl} and \ref{def:tcgl} in section \ref{sec:notation}) restrict our attention to shapes with boundaries that can locally (i.e., within radius $r$) be viewed as graphs of functions in a Cartesian plane in one particular orientation (in the case of \tgl{}) or a particular set of orientations (for \tcgl{}).
Intuitively, these conditions guarantee that the boundary does not turn too sharply within the given radius and that working locally in Euclidean space is the same as working locally on the boundary of our shapes (i.e., the shape boundary does not pass through any given invariant circle multiple times, section \ref{sec:tgl_2arc}).
These simplifying assumptions allow us to explicitly analyze what happens when we move along the boundary and to work locally without worrying about global effects.

We show that the \tcgl{} property can be preserved when approximating a shape with a polygon (section \ref{sec:tgl-ncgl}) and discuss what the derivatives of these nonasymptotic densities represent (section \ref{sec:derivatives}) and show that all \tgl{} boundaries can be reconstructed (modulo translations and rotations) given sufficient information about the nonasymptotic density and its derivatives (section \ref{sec:reconstruct-T} and appendix \ref{sec:appen}).

The main contribution of this paper is to show (under our \tcgl{} condition) that all polygons (theorem \ref{thm:tcgl-poly-reconstruct} in section \ref{sec:polygon-no-tail}) and a $C^1$-dense set of $C^2$ boundaries (theorem \ref{thm:tcgl-reconstruct} in section \ref{sec:generic}) are reconstructible (modulo translations and rotations).
We briefly discuss and sketch the proofs of these two theorems.

{
\renewcommand{\thethm}{\ref{thm:tcgl-poly-reconstruct}}
\begin{thm}
For a polygon $\shape$ which is \tcgl{} with radius $r$, suppose that we have the integral area invariant $g(s,r)$ where $s$ is parameterized by arc length.
Suppose that for all $s$ we know $g(s,r)$ and its first derivatives with respect to $r$ (disk radius) and $s$ (position along the boundary).
This information is sufficient to completely determine $\shape$ up to translation and rotation; that is, we can recover the side lengths and angles of $\shape$.
\end{thm}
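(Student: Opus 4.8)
The plan is to read the ordered list of interior angles and side lengths of $\shape$ directly off $g(\cdot,r)$ and its first derivatives, exploiting that for a polygon the invariant is ``flat'' along the interior of each side and only ``active'' near the vertices.

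First I would fix a unit-speed (say counterclockwise) parameterization $p(s)$ of $\bd$ and record the local structure. Since $\shape$ is \tcgl{} with radius $r$, the circle $\cir{p(s)}{r}$ meets $\bd$ in a single arc (section~\ref{sec:tgl_2arc}), so $\disk{p(s)}{r}\cap\shape$ is the disk intersected with the local picture of $\shape$ along that arc. If $p(s)$ lies on the interior of a side and $\disk{p(s)}{r}$ reaches neither endpoint of that side, this local picture is a half-plane, so $\disk{p(s)}{r}\cap\shape$ is a half-disk and $g(s,r)=\tfrac12\pi r^2$, $\pd{g}{r}=\pi r$, $\pd{g}{s}=0$. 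Otherwise $p(s)$ is within arc-distance $r$ of one or more consecutive vertices, the local picture is the union of the tangent wedges at those vertices, and $g$ deviates from $\tfrac12\pi r^2$. Hence $\{s:g(s,r)\ne\tfrac12\pi r^2\}$ is a finite union of ``active'' intervals, each the union of the radius-$r$ arc-neighborhoods of a cluster of consecutive vertices; when every side is longer than $2r$ the clusters are singletons and each active interval has length exactly $2r$, but in general several wedges may interact inside one disk.

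Next I would pin down the vertices exactly. Set $F(q)=\area(\disk{q}{r}\cap\shape)$, so $g(s,r)=F(p(s))$. By the first-variation/transport formula for a moving region (cf.\ section~\ref{sec:derivatives}), $\nabla F(q)=\int_{\cir{q}{r}\cap\shape}\nu\,d\ell$, where $\nu$ is the outward unit normal of the disk. Away from the vertices $p(s)$ is differentiable, and one checks that $F$ varies in a $C^1$ manner along it even as $\cir{p(s)}{r}$ sweeps over far-away vertices or becomes tangent to other sides (the area exchanged in such an event is $O(\delta^{3/2})$ in the penetration depth $\delta$), so $\pd{g}{s}(\cdot,r)$ is continuous there. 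At a vertex $v$ with interior angle $\theta_v$, however, the one-sided $s$-derivatives of $g$ are $\nabla F(v)\cdot e^{-}$ and $\nabla F(v)\cdot e^{+}$ for the incoming and outgoing traversal directions $e^{-},e^{+}$, and by the \tcgl{} property $\cir{v}{r}\cap\shape$ is exactly the arc of the interior wedge at $v$, giving $\nabla F(v)=2r\sin(\theta_v/2)\,\hat b$ with $\hat b$ the interior bisector direction; a short computation ($e^{\mp}\cdot\hat b=\mp\cos(\theta_v/2)$) turns this into a jump of $\pd{g}{s}$ across $s=s_v$ of magnitude $2r|\sin\theta_v|$, nonzero precisely because $\theta_v\ne\pi$. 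Consequently the vertices of $\shape$ are exactly the points at which $\pd{g}{s}(\cdot,r)$ jumps --- a characterization insensitive to whether the active intervals overlap --- and at such a point the value $g(v,r)=\tfrac12\theta_v r^2$ (equivalently $\pd{g}{r}(v,r)=\theta_v r$) recovers $\theta_v$. Finally the arc-length gaps between consecutive vertices, together with the known perimeter, recover all the side lengths; since the ordered list of side lengths and interior angles determines the polygon up to translation and rotation, this completes the reconstruction.

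The main obstacle I anticipate is the regularity analysis underlying the second step: showing that $g(\cdot,r)$ really is $C^1$ off the vertices --- in particular, controlling the area exchanged as the invariant circle sweeps over other vertices or runs tangent to other sides --- and then making the corner of $g$ at each vertex, and its exact size $2r|\sin\theta_v|$, rigorous when vertices cluster (very short sides), where the clean ``active interval of length $2r$ centered at the vertex'' picture fails; here the single-arc structure of $\cir{q}{r}\cap\bd$ guaranteed by the \tcgl{} hypothesis is exactly what keeps the transport formula and the wedge computation applicable. A minor but necessary check is that every quantity used --- the corner locations and jump sizes of $\pd{g}{s}$, and the vertex values of $g$ and $\pd{g}{r}$ --- is local to a single parameter value, hence genuinely determined by the given data.
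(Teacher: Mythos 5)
Your vertex-location step (vertices are exactly the points where $\pd{g}{s}(\cdot,r)$ jumps, hence side lengths are recovered) matches the paper, and in the regime where every side is longer than the disk size your angle formulas are correct and even simpler than the paper's. But the angle-recovery step has a genuine gap: you read $\theta_v$ off $g(v,r)=\tfrac12\theta_v r^2$, equivalently $\pd{g}{r}(v,r)=\theta_v r$, and you get the jump size $2r|\sin\theta_v|$ from the identity $\nabla F(v)=2r\sin(\theta_v/2)\,\hat b$ --- all of which presuppose that $\cir{v}{r}\cap\shape$ is exactly the arc cut out by the interior wedge at $v$, i.e.\ that the boundary leaves $\disk{v}{r}$ along the two sides incident to $v$. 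The \tcgl{} hypothesis does not give you this. It gives two transverse intersections and a single arc of $\bd$ through the disk, but that single arc may pass through several vertices (a \tcgl{} polygon can have sides much shorter than $r$; think of a fine polygonal approximation of a circle with $r$ comparable to several side lengths). In that case the entry and exit points lie on far sides, $\pd{g}{r}(v,r)=r(\theta_2-\theta_1)$ is the angular extent of the circle inside $\shape$, which is not $\theta_v$, and the jump of $\pd{g}{s}$ depends on where the boundary happens to cross the circle, not just on $\theta_v$. You flag vertex clustering as the ``main obstacle,'' but your proposed cure --- that the two-arc property ``keeps the wedge computation applicable'' --- is exactly what fails; so the proof as written covers only polygons whose sides are long relative to $r$, not the full statement.

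The paper's proof is built precisely around this difficulty. At a vertex it uses $g_r$ together with \emph{each} one-sided $s$-derivative to solve (via a sliding/monotonicity argument that pins down a unique solution) for the entry and exit angles $(\theta_1,\theta_2)$ in the frame of the incoming side and $(\phi_1,\phi_2)$ in the frame of the outgoing side. These are the same two geometric crossing points expressed in two coordinate systems rotated by the turning angle, so $\psi=\theta_1-\phi_1$ is recovered and the interior angle is $\pi-\psi$ --- no matter which sides the crossing points actually lie on. To repair your argument you would either have to restrict to the long-side case or replace the wedge formulas by some such frame-comparison at the vertex; as it stands, the step ``$g(v,r)$ (or $g_r(v,r)$) recovers $\theta_v$'' is false for general \tcgl{} polygons.
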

\addtocounter{thm}{-1}
}

The proof of this theorem uses the discontinuities in the $s$ derivative to determine the locations of vertices (and thus the side lengths between them).
We combine the $r$ derivative and the one-sided $s$ derivative information when centered on a vertex to recover the angles at which the polygon enters and exits the circle (which might not be the polygon vertex angle if the circle contains another vertex).
Doing this with the other one-sided $s$ derivative gives the same thing but using the orientation determined by the other polygon side incident to the vertex.
The combination of these yields the polygon's angle at each vertex.

{
\renewcommand{\thethm}{\ref{thm:tcgl-reconstruct}}
\begin{thm}
  Define $\Bbb{G} \equiv \{ \gamma| \gamma$ is a $C^2$ simple closed
  curve and \tgl{} for $r=\hat{r}\}$.  Suppose that, for $r=\hat{r}$, for
  all $s\in[0,L]$, and for each $\gamma\in\Bbb{G}$, we know the first-,
  second-, and third-order partial derivatives of $g_{\gamma}(s,r)$.
  Then the set of reconstructible $\gamma\in\Bbb{G}$ is
  $C^1$ dense in $\Bbb{G}$ where reconstructability is modulo reparametrization, translation, and rotation.
\end{thm}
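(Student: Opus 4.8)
The plan is to show that, for a $C^1$-dense set of $\gamma\in\Bbb{G}$, the prescribed derivative data at $r=\hat r$ determine the curvature function $\kappa_\gamma$ along $\gamma$; by the fundamental theorem of plane curves this recovers $\gamma$ up to the stated equivalences (the arc-length parametrization itself being defined only up to basepoint and orientation). Throughout we use the \tgl{} hypothesis which, as in section~\ref{sec:tgl_2arc}, forces $\cir{\gm{s}}{\hat r}$ to meet $\bd$ in exactly two points varying smoothly with $s$; write their arc-length parameters as $\sigma_-(s)$ and $\sigma_+(s)$ on the two sides of $s$. These are mutually inverse degree-one circle maps with $|\gm{\sigma_\pm(s)}-\gm{s}|=\hat r$, and $\gm{s}$ is itself a crossing point for the circle centred at $\gm{\sigma_+(s)}$.

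The first step is to read off local and chord data from the derivatives. By the formulas of section~\ref{sec:derivatives}, $\partial_r g_\gamma(s,\hat r)$ is $\hat r$ times the angular aperture of the arc of $\cir{\gm{s}}{\hat r}$ inside $\shape$, and $\partial_s g_\gamma(s,\hat r)$ is an explicit trigonometric function of the two angles the chord endpoints make with $\dgm{s}$; solving this pair at each $s$ recovers those two angles up to a reflection about $\dgm{s}$ and a relabelling. Feeding in $\partial_r^2 g_\gamma$, $\partial_r^3 g_\gamma$ and the mixed second- and third-order derivatives then yields the tangent directions and curvatures of $\gamma$ at the two points $\gm{\sigma_\pm(s)}$, again up to finitely many sign choices. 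These discrete ambiguities can be resolved by continuity in $s$ together with simplicity of $\gamma$ and a consistent orientation of $\bd$, \emph{except} where the governing discriminants vanish; demanding that no such discriminant vanish on a whole subinterval is the first genericity requirement.

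The second step assembles this into a global determination of $\kappa_\gamma$. Since $\gm{s}$ and $\gm{\sigma_+(s)}$ are joined by the same chord, matching the absolute chord direction computed from each end gives $\tau(\sigma_+(s))-\tau(s)=\int_s^{\sigma_+(s)}\kappa_\gamma$, whose right-hand side (after resolving signs) is known; differentiating in $s$ produces a transfer relation $\kappa_\gamma(\sigma_+(s))\,\sigma_+'(s)-\kappa_\gamma(s)=h(s)$ with $h$ known, i.e.\ the statement that $\sigma_+^\ast(\kappa_\gamma\,ds)-\kappa_\gamma\,ds$ equals a prescribed coboundary. The maps $\sigma_\pm$ enter the data too and must be recovered jointly with $\kappa_\gamma$; I would do this by a continuation argument seeded by the global turning constraint $\int_0^L\kappa_\gamma\,ds=2\pi$ and the periodicity of $\sigma_\pm$, which also kills the residual scalar freedom in the above cohomological equation. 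The solution is unique provided $\sigma_+$ carries no spurious invariant $1$-form, which holds whenever $\sigma_+$ is non-resonant (irrational rotation number); resonant $\sigma_+$, occurring for highly symmetric shapes, is exactly the case the theorem must exclude, and is the second genericity requirement.

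It remains to establish that the two genericity requirements hold on a $C^1$-dense subset of $\Bbb{G}$, and this is the step I expect to be the main obstacle. Given $\gamma\in\Bbb{G}$ and $\varepsilon>0$, first dilate $\gamma$ slightly: the result is $C^1$-close to $\gamma$ and is \tgl{} for some radius strictly exceeding $\hat r$, which leaves quantitative room to perturb. Then add to its curvature a perturbation small in $C^0$ --- hence moving the curve by a $C^1$-small amount that keeps it $C^2$ and still \tgl{} for $\hat r$ --- chosen so that $\sigma_+$ acquires irrational rotation number and the finitely many discriminants are not identically zero on any subinterval. The delicate points are that $\sigma_+$ is a \emph{nonlocal} functional of the curve (it depends on the whole portion within radius $\hat r$), so one must show a small curvature change can steer it to non-resonance; and that a $C^0$-small curvature change never leaves $\Bbb{G}$, which is where the margin from the initial dilation is used. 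I would handle these by a transversality/perturbation argument exploiting that the resonance and discriminant conditions depend real-analytically on a finite-dimensional perturbation parameter, so their bad loci are nowhere dense. This makes the reconstructible set $C^1$-dense in $\Bbb{G}$, as claimed.
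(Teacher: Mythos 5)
Your first step — solving the first derivatives for $\theta_1,\theta_2$, the second derivatives for $\nu_1,\nu_2$, and the third derivatives for the curvatures at the two crossing points — is essentially the paper's Claims 1--3, and that part is fine. The genuine gap is in your second step, which is the heart of the theorem: recovering the unknown correspondence $s\mapsto s^{+}(s)$ (equivalently the arc-length location of the crossing points), without which the curvatures you have computed are indexed by the wrong parameter. The paper does this by observing that the two \emph{known} functions $A(s)=\theta_1(s)+\nu_1(s)$ and $B(s)=\pi-\theta_2(s)-\nu_2(s)$ satisfy $A=B\circ s^{+}$, and then perturbing the curve ($C^1$-small, not $C^2$-small) so that these turning-angle functions have isolated critical points and a unique global maximum; matching the maxima and using monotonicity between critical points pins down the homeomorphism $s^{+}$ explicitly. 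You instead differentiate the turning-angle identity to get a transfer relation $\kappa(\,s^{+}(s)\,)\,\frac{d s^{+}}{ds}-\kappa(s)=h(s)$ and assert uniqueness of the joint solution $(\kappa, s^{+})$ whenever $s^{+}$ is ``non-resonant.'' But the cohomological-equation uniqueness you invoke (no nonconstant solutions of $u\circ\sigma-u=0$ for minimal $\sigma$) applies to a \emph{known} circle map $\sigma$ and an unknown function $u$; here $s^{+}$ is itself one of the unknowns, the problem is nonlinear in the pair $(\kappa,s^{+})$, and no argument is given that irrationality of the rotation number forces uniqueness of the pair. The vague ``continuation argument seeded by $\int_0^L\kappa=2\pi$'' does not substitute for this; as it stands the key identification step is asserted, not proved.

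The density step has a related problem. You claim the resonance locus is ``nowhere dense'' because it ``depends real-analytically on a finite-dimensional parameter,'' but rational rotation number is not an analytic equation: for families of circle maps the mode-locked (rational rotation number) parameter sets are typically unions of intervals with nonempty interior (Arnold tongues), so the bad locus is generally \emph{not} nowhere dense, and producing an irrational rotation number by a controlled $C^1$-small perturbation of the curve — through the nonlocal functional $\gamma\mapsto s^{+}$ — would itself require a careful argument that the rotation number actually moves. Contrast this with the paper, where genericity is achieved constructively: alternating oscillatory perturbations (staggering bounds on the second and third derivatives around the curve) make the turning-angle curves monotone between isolated critical points, and a local ``twist'' perturbation creates a unique global maximum; both are manifestly $C^1$-small and keep the curve in $\Bbb{G}$. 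So while your step 1 reproduces the paper, steps 2 and 3 rest on an unproved uniqueness claim and an incorrect genericity mechanism, and would need to be replaced by (or reduced to) a direct matching of the two known turning-angle functions as in the paper's proof.
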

\addtocounter{thm}{-1}
}

The first part of the proof shows that the derivative information can be used to obtain the curvature.
However, it is not the curvature at the boundary point where the circle is centered but rather the curvature at each of the points where the boundary enters and exits the circle.
Although the Euclidean distance to these points is known, the arc length distances are not and can vary from point to point.
Thus the sequences of curvatures we obtain also lose the arc length parameterization of our area invariant.
The rest of the proof is concerned with finding the arc length distance from the center to the entry and exit points which effectively recovers the curvature for all points.
This relies on matching up the unique features of exit angle sequences with each other which in turn relies on the existence of unique maxima and minima in these sequences.
While this is not true in general, it can be arranged to be so by a suitable small perturbation of the boundary (which is why our result is one of density rather than for all shapes).

This is a theoretical paper about a measure that is useful in applications: we do
not pretend that the reconstruction techniques in our proofs are practically
useful. In fact, the reconstructions we use to show uniqueness would be
seriously disturbed by the noise that any practical application would
encounter.
We do, however, comment on some possible approaches to
reconstruction (section \ref{sec:numerics}) using the \textsc{OrthoMads} direct search algorithm\cite{AbAuDeLe2009} to successfully reconstruct shapes which are not predicted by our theory.

\section{Notation and Preliminaries}
\label{sec:notation}

\begin{figure}[!]
\centering
\input{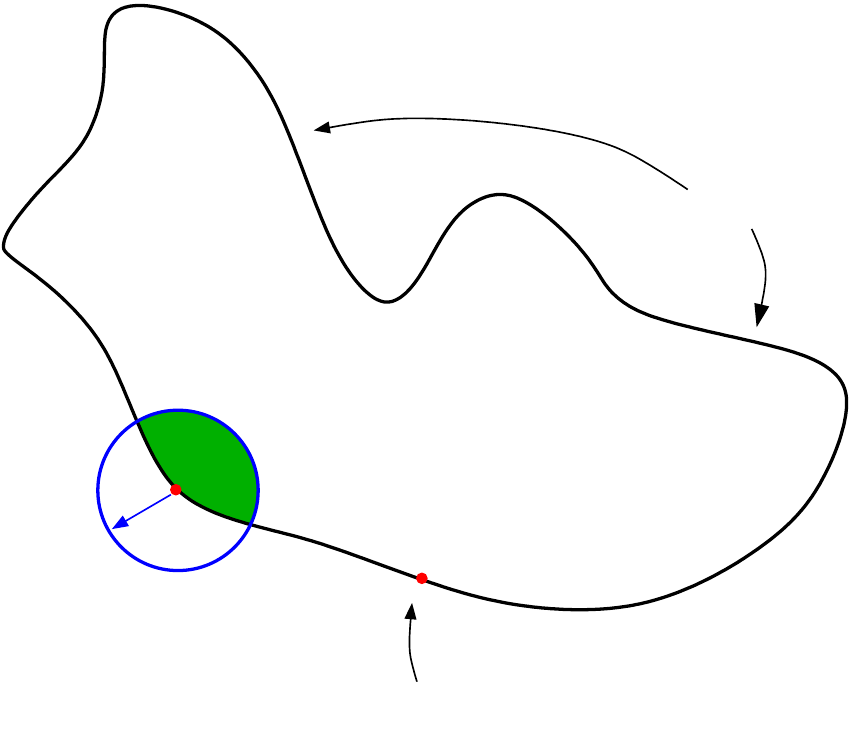_t}
\caption{Notation and basic setup}
\label{fig:shape-notation}
\end{figure}

Unless otherwise specified, we will be assuming throughout this paper that $\shape \subset \mathbb{R}^2$ is a compact set with simple closed, piecewise continuously differentiable boundary $\bd$ of length $L$.
Let $\gamma : [0, L] \rightarrow \bd$ be a continuous arclength parameterization of $\bd$ (see Figure~\ref{fig:shape-notation}).
We will adopt the convention that $\gamma$ traverses $\bd$ in a counterclockwise direction so it always keeps the interior of $\shape$ on the left (there is no compelling reason for this particular choice, but adopting a consistent convention allows us to avoid some ambiguities later).
Note that $\gamma(0) = \gamma(L)$ and that $\gamma$ restricted to $[0,L)$ is a bijection.
Denote by $\disk{p}{r}$ the closed disk and $\cir{p}{r}$ the circle of radius $r$ centered at the point $p \in \mathbb{R}^2$.

In geometric measure theory, the $m$-dimensional density of a set $A \subseteq \mathbb{R}^n$ at a point $p \in \mathbb{R}^n$ is given by
\[
\Theta^m(A, p) = \lim_{r\downarrow 0}\frac{\mathcal{H}^m(A \cap \disk{p}{r})}{\alpha_m r^m}
\]
where $\mathcal{H}^m$ is the $m$-dimensional Hausdorff measure
and $\alpha_m$ is the volume of the unit ball in $\mathbb{R}^m$\cite{morgan-gmt}.
In the current context, the $2$-dimensional density of $\shape$ at $\gamma(s)$ is simply
\[
\Theta^2(\shape, \gamma(s)) = \lim_{r\downarrow 0}\frac{\area(\shape \cap \disk{\gamma(s)}{r})}{\pi r^2}.
\]

While we can evaluate this for all $s \in [0,L)$, just knowing the density at every point along the boundary is generally insufficient to reconstruct the original shape.
If $\gamma'(s)$ exists, then $\area(\shape \cap \disk{\gamma(s)}{r})$ is approximated arbitrarily well for sufficiently small $r$ by replacing $\bd$ with its tangent line (which gives us an area of exactly $\frac{\pi r^2}{2}$).
Hence, we have $\Theta^2(\shape, \gamma(s)) = \frac{1}{2}$ at any point where $\gamma$ is differentiable.
That is, just knowing $\Theta^2$ (i.e., the limit) is insufficient to distinguish any two shapes with $C^1$ boundary.

Contrast this with the situation where we know $\area(\shape \cap \disk{\gamma(s)}{r})$ for every $s \in [0,L)$ and $r > 0$ (i.e., we have all of the values needed to compute the limit as well).
This added information is sufficient to uniquely identify $C^2$ curves by recovering their curvature at every point (see Appendix \ref{sec:appen}).

One natural question to ask (and the focus of the present work) is whether failing to pass to the limit (i.e., using some fixed radius $r$ instead of the limit or all $r > 0$) and collecting the values for all points along the boundary preserves enough information to reconstruct the original shape.
That is, can a nonasymptotic density (perhaps along with information about its derivatives) be used as a signature for shapes?

\subsection{Definitions}

\begin{dfn}
\label{def:geomeasure}
In the current context, the integral area invariant\cite{manay-2004-1} is denoted by $g : [0, L) \times \mathbb{R}^+ \rightarrow \mathbb{R}^+$ and given by
\[
g(s, r) = \int_{\disk{\gamma(s)}{r}\cap\shape} \,dx = \area(\shape \cap \disk{\gamma(s)}{r}).
\]
\end{dfn}

\begin{rem}
Note the lack of the normalizing factor $\pi r^2$ in the definition of $g(s,r)$.
Since we presume that $r$ is fixed and known for the situations we study, it's trivial to convert data between the forms $g(s,r)$ and $\frac{g(s,r)}{\pi r^2}$;
we choose to leave out the normalizing factor in the definition of $g(s,r)$ as it is the integral area invariant of Manay et al.\cite{manay-2004-1} and this form proves useful when computing derivatives in section \ref{sec:derivatives}.
\end{rem}

We introduce the \tgl{} condition as a simplifying assumption for the shapes we consider.

\begin{dfn}
\label{def:gl}
\label{def:tgl}
For a fixed radius $r$, we say that $\bd$ is graph-like (GL) at a point $p \in \bd$ (or graph-like on $\disk{p}{r}$) if it is possible to impose a Cartesian coordinate system such that the set of points $\bd \cap \disk{p}{r}$ is the graph of some function $f$ in this coordinate system.
Without loss of generality, we adopt the convention that $p$ is the origin so that $f(0) = 0$.
We define \tgl{} (TGL) in the same way but further require that $\bd$ be continuously differentiable and $f'(0) = 0$ (noting that $f$ is $C^1$ because $\bd$ is).
This is illustrated in figure \ref{fig:tgl}.
Without loss of generality (and in keeping with our convention that $\gamma$ traverses $\bd$ counterclockwise), we assume that the interior of $\shape$ is ``up'' in the circle (i.e., that $(0,\epsilon) \in \shape$ for sufficiently small $\epsilon > 0$).
If $\bd$ is (tangentially) graph-like on $\disk{p}{r}$ for all $p \in \bd$, we say that $\bd$ is (tangentially) graph-like for radius $r$.
\end{dfn}
\begin{figure}
\makebox[\textwidth][c]{
\subfigure[]{
\input{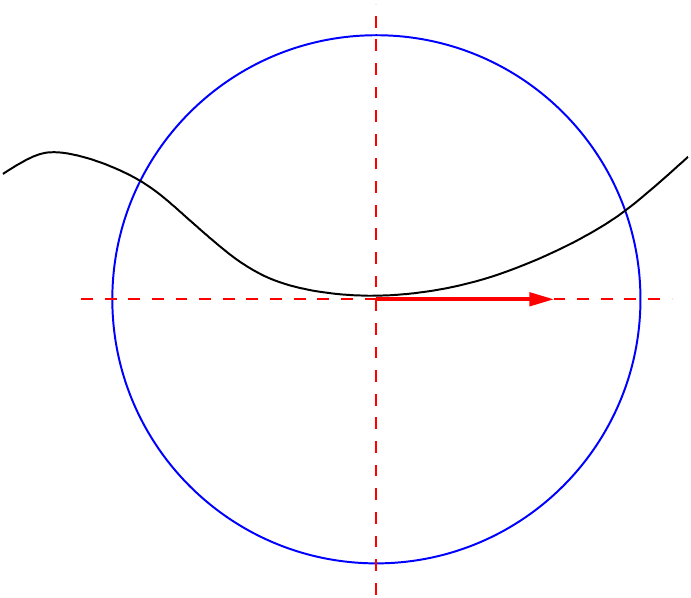_t}
\label{fig:tgl}
}
\subfigure[]{
\input{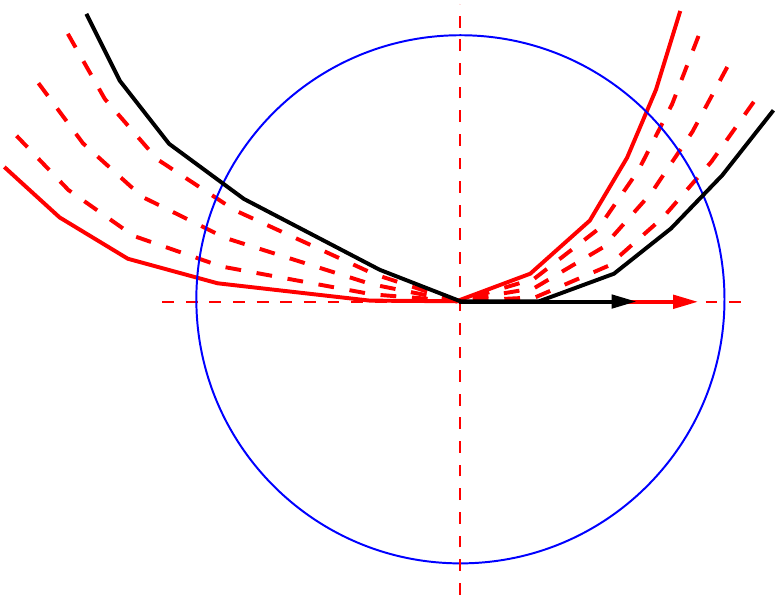_t}
\label{fig:tcgl}
}}
\caption{\subref{fig:tgl} Tangentially and \subref{fig:tcgl} tangent cone graph-like}
\end{figure}
\begin{figure}
\subfigure[]{
\label{fig:glfail}
\begin{tikzpicture}[scale=.9]
\draw[thick=2pt,fill=teal!40] (-2, 4) -- (-2,0) -- (2,0) -- (2,4) -- cycle;
\draw[color=blue] (0,0) circle (3);
\node[label=below:{$\gamma(s)$},style=circle,fill=red,minimum size=4pt,inner sep=0pt] () at (0,0) {};
\end{tikzpicture}}
\subfigure[]{
\label{fig:tglfail}
\begin{tikzpicture}[scale=.9]
\draw [thick=2pt,rounded corners=5mm,fill=teal!40] (-4,0)--(4,0)--(4,2)--(-4,2)--cycle;
\draw[color=blue] (0,0) circle (3);
\node[label=below:{$\gamma(s)$},style=circle,fill=red,minimum size=4pt,inner sep=0pt] () at (0,0) {};
\end{tikzpicture}}
\caption{\subref{fig:glfail} The square is not graph-like with the indicated radius (no orientation makes it a graph).
\subref{fig:tglfail} The rounded rectangle is graph-like but not \tgl{} with the indicated center and radius.}
\end{figure}

It is instructive to consider what is {\em not} graph-like or \tgl{}.
Violations of the graph-like condition are generally due to a radius that is too large (certainly, choosing a radius so large that all of $\shape$ is in the disk will do it).
For example, a unit side length square is not graph-like with radius $\frac{1}{2} + \epsilon$ for any $\epsilon > 0$ (position the circle at the center of a side; see figure \ref{fig:glfail}).
Notice that the same square is graph-like with any radius $\frac{1}{2}$ or below.
A shape can fail to be \tgl{} while still being graph-like if it fails to be a graph in the required orientation but works in some other (see figure \ref{fig:tglfail}).

We would like to consider shapes with corners but our \tgl{} condition requires that the boundary be differentiable everywhere.
The following definitions allow us to generalize the \tgl{} condition to this situation by using one-sided derivatives.

\begin{dfn}
\label{def:tcone}
Given a piecewise $C^1$ function $\gamma : [0,L] \rightarrow \mathbb{R}^2$, we define the tangent cone of $\gamma$ at a point $s$ (which we denote by $\tc{\gamma}{s}$) in terms of the one-sided derivatives.
In particular, we let $\tc{\gamma}{s} = \{\alpha \Gamma^- + \beta \Gamma^+ \mid \alpha,\beta \geq 0, \alpha + \beta > 0\}$ where $\Gamma^- = \lim_{t\uparrow s} \gamma'(t)$ and $\Gamma^+ = \lim_{t\downarrow s} \gamma'(t)$.
\end{dfn}

\begin{dfn}
\label{def:tcgl}
We extend the \tgl{} notion to boundaries that are piecewise $C^1$ by defining $\bd$ to be \tcgl{} (TCGL) at a point $\gamma(s) \in \bd$ if it is graph-like at $\gamma(s)$ for every orientation in the tangent cone of $\bd$ at $s$.
More precisely, for every 
$w \in \tc{\gamma}{s}$ and every pair of distinct points $u,v\in\bd\cap\disk{p}{r}$, we have $\langle w, u-v \rangle \neq 0$ (see figure \ref{fig:tcgl}).
\end{dfn}

\begin{rem}
It is clear that $\tc{\gamma}{s}$ in definition \ref{def:tcone} is a convex cone.
The tangent cone is dependent on the direction in which $\gamma$ traverses $\bd$ (which by convention was counterclockwise) since an arc-length traversal $\hat{\gamma}(s,r) = \gamma(L-s,r)$ would have different tangent cones (namely, $w \in \tc{\gamma}{s}$ iff $-w \in \tc{\hat{\gamma}}{s}$).
However, these differences are irrelevant to the application of definition \ref{def:tcgl}.
\end{rem}

\begin{rem}
Note that when $\bd$ is $C^1$, there is only one direction in $\tc{\gamma}{s}$ for each $s$ (i.e., the tangent to $\bd$ at $\gamma(s)$).
Thus, the definitions of \tgl{} and \tcgl{} coincide when $\bd$ is $C^1$ and every \tgl{} boundary is \tcgl{}.
\end{rem}

\subsection{Two-Arc Property}
\label{sec:tgl_2arc}

The graph-like condition implies (in proof of the following lemma) that $\shape$ will never be entirely contained in the disk, no matter where on the boundary we center it.
That is, some part of $\shape$ lies outside of $\disk{p}{r}$ for every $p \in \bd$.

\begin{lem}
\label{2intlemma}
Let $r \in \mathbb{R}^+$ and $p \in \bd$. If $\bd$ is graph-like on $\disk{p}{r}$, then 
$|\bd \cap \cir{p}{r}| \geq 2$.
\begin{proof}
Suppose by way of contradiction that $|\bd \cap \cir{p}{r}| < 2$.
Since $\bd$ is a simple closed curve, we have $\bd \subseteq \disk{p}{r}$.
As $\bd$ is graph-like at $p$ with radius $r$, there exists some orientation for which $\bd \cap \disk{p}{r} = \bd$ is the graph of a well-defined function.
However, $\bd$ is a simple closed curve so it is not the graph of a function in any orientation, yielding a contradiction.
\end{proof}
\end{lem}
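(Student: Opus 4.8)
The plan is a proof by contradiction. Suppose $|\bd \cap \cir{p}{r}| \le 1$; I would first deduce that $\bd \subseteq \disk{p}{r}$, and then contradict the graph-like hypothesis.

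For the containment, observe that $p \in \bd$ lies in the interior of $\disk{p}{r}$ since $r > 0$, while $\mathbb{R}^2 \setminus \cir{p}{r}$ has exactly two connected components, the open disk and the unbounded exterior. If $\bd \cap \cir{p}{r} = \emptyset$, then $\bd$ is a connected subset of $\mathbb{R}^2 \setminus \cir{p}{r}$ meeting the open-disk component (it contains $p$), so $\bd$ lies in the open disk. If instead $\bd \cap \cir{p}{r} = \{x_0\}$, I would use that $\bd$ is a \emph{simple} closed curve, hence homeomorphic to $S^1$: removing the single point $x_0$ leaves $\bd \setminus \{x_0\}$ connected, and this set misses $\cir{p}{r}$ while still containing $p$, so by the same component argument $\bd \setminus \{x_0\} \subseteq \disk{p}{r}$, and therefore $\bd \subseteq \disk{p}{r}$.

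Granting the containment, $\bd \cap \disk{p}{r} = \bd$, so graph-likeness supplies a Cartesian coordinate system in which $\bd$ is the graph of a function $f$; in particular every ``vertical'' line meets $\bd$ in at most one point. But a simple closed curve cannot be a graph: its projection to the ``horizontal'' axis is a nondegenerate compact interval $[a,b]$ (nondegenerate since $\bd$ is not contained in a line), and applying the intermediate value theorem to each of the two subarcs of $\bd$ joining a leftmost point to a rightmost point produces a vertical line $x = c$, $a < c < b$, meeting $\bd$ in at least two points. This contradiction finishes the proof.

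The step that deserves the most care is the containment $\bd \subseteq \disk{p}{r}$, and specifically the case of exactly one intersection point: it genuinely relies on $\bd$ being a simple closed curve rather than merely connected, since deleting an arbitrary point of a connected set can disconnect it and then nothing prevents part of $\bd$ from escaping the disk. The remaining ingredients are routine, and the graph-versus-closed-curve contradiction is exactly the phenomenon illustrated by figure \ref{fig:glfail}.
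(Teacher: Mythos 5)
Your proof is correct and follows essentially the same route as the paper's: assume fewer than two intersection points, conclude $\bd \subseteq \disk{p}{r}$, and then contradict graph-likeness because a simple closed curve cannot be the graph of a function. You simply supply the connectivity and intermediate-value details that the paper leaves implicit, which is a welcome but not substantively different elaboration.
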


The next result is the reason we find the \tcgl{} condition useful.
It says that if $\bd$ is \tcgl{} with radius $r$, then, for every $p \in \bd$, the disk $\disk{p}{r}$ has only two points of intersection with $\bd$ and these are transverse.
In other words, this means that when working locally in the disk $\disk{p}{r}$ we need only consider a single piece of $\bd$.

\begin{thm}
\label{thm:two-intersections}
If $\bd$ is \tcgl{} with radius $r \in \mathbb{R}^+$ at $p \in \bd$,
then $|\bd \cap \cir{p}{r}| = 2$ and $\bd$ crosses $\cir{p}{r}$ transversely at these points.
As a result, for every $q_1, q_2 \in \bd \cap \disk{p}{r}$, there is a unique arc along $\bd$ between them in $\disk{p}{r}$.
\begin{proof}
By Lemma \ref{2intlemma}, we have that $|\bd \cap \cir{p}{r}| \geq 2$.
Note that $\bd$ contains an interior point ($p$) and at least two boundary points of the disk $\disk{p}{r}$ (since $|\bd \cap \cir{p}{r}| \geq 2$).
As $\bd$ is connected and simply closed, there must exist an arc of $\bd$ within the disk going from some point on $\cir{p}{r}$ through $p$ to another point on $\cir{p}{r}$.

Suppose $|\bd \cap \cir{p}{r}| > 2$; that is, there are other points of intersection.
Letting $q$ denote one of these, there are two cases to consider (illustrated in Figure \ref{fig:nointersect}).
\begin{figure}
\centering
\subfigure{
\input{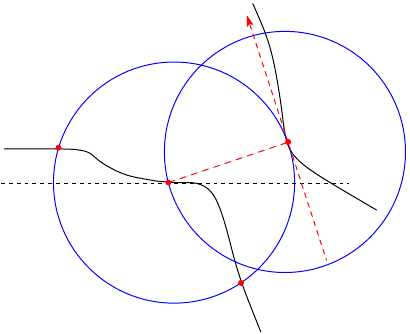_t}
}
\input{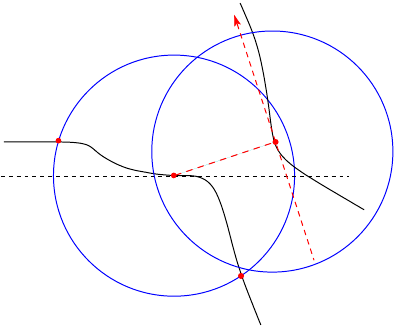_t}
\input{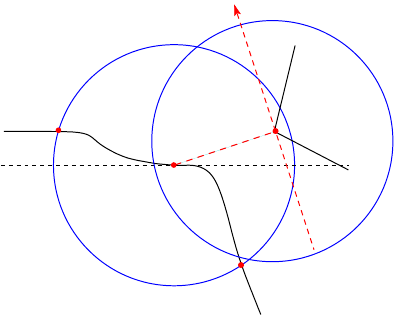_t}
\caption{Additional points of intersection violate the TCGL condition.}
\label{fig:nointersect}
\end{figure}
\begin{enumerate}
\item [(a)]
\label{case:nocross}
$\bd$ does not cross $\cir{p}{r}$ at $q$.

As $\bd$ is \tcgl{} at $q$, then $\bd\cap\cir{q}{r}$ is a graph in every orientation in the tangent cone of $\bd$ at $q$.
In particular, note that the tangent line to $\cir{p}{r}$ at $q$ is in this cone.
However, the line from $p$ to $q$ is normal to this line and thus $\bd\cap\cir{q}{r}$ is not graph-like in this orientation, a contradiction.
Therefore, this case cannot occur.
This argument applies to all points in $\bd \cap \cir{p}{r}$ so we immediately have the result that $\bd$ always crosses $\cir{p}{r}$ transversely.

\item [(b)]
$\bd$ crosses $\cir{p}{r}$ at $q$.

There exists $q' \in \bd \cap \cir{p}{r}$ such that there is a path along $\bd$ in $\disk{p}{r}$ from $q$ to $q'$.
That is, there exist $s_1, s_2 \in [0,L)$ (without loss of generality, $s_1<s_2$) such that $\gamma(s_1) = q$, $\gamma(s_2) = q'$ and the image of $[s_1,s_2]$ under $\gamma$ is contained in $\disk{p}{r}$ (but does not include $p$, since it is on another arc and $\bd$ is simple).
Thus $\gamma$ enters $\cir{p}{r}$ at $s_1$ and exits at $s_2$.

If we can find $s \in [s_1, s_2]$ and $w$ in the tangent cone of $\bd$ at $\gamma(s)$ satisfying $\langle w, p - \gamma(s) \rangle  = 0$, we will contradict that $\bd$ is \tcgl{}.

Define $v : [s_1,s_2] \rightarrow \mathbb{R}^2$ by
\[
v(s) = \begin{cases}
\lim_{t\downarrow s_1} \gamma'(s),& s=s_1,\\
\lim_{t\uparrow s} \gamma'(s),& s \in (s_1, s_2].
\end{cases}
\]
Note that $v(s)$ is in the tangent cone of $\bd$ at $\gamma(s)$ so that $\bd\cap\disk{\gamma(s)}{r}$ is graph-like using the orientation given by $v(s)$.

Define $\phi(s) : [s_1, s_2] \rightarrow \mathbb{R}$ by $\phi(s) = \langle v(s), p - \gamma(s)\rangle$.
Note that from $\gamma(s_1)$ both $v(s_1)$ and $p - \gamma(s_1)$ are directions pointing into the circle so $\phi(s_1) > 0$.
Similarly, $v(s_2)$ points out and $p - \gamma(s_2)$ points in so that $\phi(s_2) < 0$.

Observe that $v$ (and therefore $\phi$) is piecewise continuous since $\gamma$ is piecewise $C^1$.
By a piecewise continuous analogue of the intermediate value theorem, there exists $\bar{s} \in [s_1, s_2]$ such that
\[
\lim_{t\rightarrow \bar{s}^-} \phi(t) \leq 0 \leq \lim_{t\rightarrow \bar{s}^+} \phi(t).
\]

By continuity of the inner product and $\gamma$, we have
\[
\lim_{t\rightarrow \bar{s}^-} \phi(t) %
= \langle \lim_{t\rightarrow \bar{s}^-} \gamma'(t), p-\gamma(\bar{s})\rangle.
\]
Similarly, $\lim_{t\rightarrow \bar{s}^+} \phi(t) = \langle \lim_{t\rightarrow \bar{s}^+} \gamma'(t), p-\gamma(\bar{s})\rangle$

If $\gamma$ is differentiable at $\bar{s}$, then $\phi(\bar{s}) = \lim_{t \rightarrow\bar{s}}\phi(t) = 0$ and we have our contradiction.
Otherwise, let $w_1 = \lim_{t\rightarrow \bar{s}^-} \gamma'(t)$ and $w_2 = \lim_{t\rightarrow \bar{s}^+} \gamma'(t)$.
As both $w_1$ and $w_2$ are in the convex tangent cone of $\bd$ at $\gamma(\bar{s})$, any positive linear combination of them is as well.
Letting $\psi(\lambda) = \lambda w_1 + (1-\lambda)w_2$, we have 
\[
\langle \psi(0), p - \gamma(\bar{s})\rangle \leq 0 \leq \langle \psi(1), p-\gamma(\bar{s})\rangle.
\]
Noting that $\psi$ is continuous in $\lambda$, we apply the intermediate value theorem to obtain $\bar{\lambda} \in (0,1)$ such that $\langle \psi(\bar{\lambda}), p-\gamma(\bar{s})\rangle = 0$.
Letting $w = \psi(\bar{\lambda})$, we obtain our contradiction.
\end{enumerate}
Therefore, there are no other points of intersection and $|\bd \cap \cir{p}{r}| = 2$.
\end{proof}
\end{thm}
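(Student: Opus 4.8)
The plan is to extract the exact count $|\bd \cap \cir{p}{r}| = 2$, the transversality, and the unique-arc statement entirely from the \tcgl{} hypothesis, by showing that any ``extra'' behaviour of $\bd$ inside $\disk{p}{r}$ would force some tangent-cone direction to be orthogonal to a chord of $\bd \cap \disk{p}{r}$, contradicting Definition \ref{def:tcgl}. Two preliminary observations get us started. Since \tcgl{} at $p$ in particular makes $\bd$ graph-like on $\disk{p}{r}$, Lemma \ref{2intlemma} gives $|\bd \cap \cir{p}{r}| \ge 2$. And since $\bd$ is connected, contains the interior point $p$, and meets $\cir{p}{r}$, it contains at least one sub-arc lying in $\disk{p}{r}$ running from a point of $\cir{p}{r}$ through $p$ to another point of $\cir{p}{r}$.

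Next I would establish transversality at \emph{every} $q \in \bd \cap \cir{p}{r}$. If $\bd$ failed to cross $\cir{p}{r}$ at $q = \gamma(s_q)$, then $t \mapsto |\gamma(s_q+t)-p|^2$ would have a local extremum at $t = 0$. When $\gamma$ is differentiable there, this forces $\langle \gamma'(s_q),\, q-p\rangle = 0$; when $q$ is a corner, the two one-sided derivatives yield inner products of opposite sign, so the intermediate value theorem applied along the convex cone $\tc{\gamma}{s_q}$ produces $w \in \tc{\gamma}{s_q}$ with $\langle w,\, q-p\rangle = 0$. Either way, $p$ and $q$ are distinct points of $\bd \cap \disk{q}{r}$ (note $|p-q| = r$, so $p \in \cir{q}{r}$), and $w$ annihilates $p - q$, contradicting \tcgl{} at $q$. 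Hence every intersection is a transverse crossing.

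Then I would rule out a third crossing. If $|\bd \cap \cir{p}{r}| > 2$, transversality forces a second maximal sub-arc $\gamma([s_1,s_2])$ of $\bd$ lying in $\disk{p}{r}$ whose interior misses $p$, with $\gamma(s_1),\gamma(s_2) \in \cir{p}{r}$. Set $\phi(s) = \langle v(s),\, p-\gamma(s)\rangle$, where $v(s) \in \tc{\gamma}{s}$ is the one-sided tangent pointing along the arc; since this arc enters $\disk{p}{r}$ at $s_1$ and exits at $s_2$, we get $\phi(s_1) > 0$ and $\phi(s_2) < 0$, and a piecewise-continuous intermediate value argument (again spreading across $\tc{\gamma}{\bar s}$ at a corner) yields $\bar s \in [s_1,s_2]$ and $w \in \tc{\gamma}{\bar s}$ with $\langle w,\, p-\gamma(\bar s)\rangle = 0$, contradicting \tcgl{} at $\bar s$ via the pair $p, \gamma(\bar s)$. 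So $|\bd \cap \cir{p}{r}| = 2$. For the corollary, two transverse crossings split the simple closed curve $\bd$ into exactly two arcs; since no further crossing (and no tangency) is possible, each arc lies wholly inside or wholly outside $\disk{p}{r}$, the arc through $p$ is the inside one, and both cannot be inside by Lemma \ref{2intlemma}. Thus $\bd \cap \disk{p}{r}$ is a single arc, and the sub-arc of it joining any two given points $q_1, q_2$ is the unique arc of $\bd$ between them lying in $\disk{p}{r}$.

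I expect the main obstacle to be the corner (non-differentiable) cases in the two intermediate-value steps: turning ``$\gamma$ is not differentiable at the critical parameter'' into an actual direction $w \in \tc{\gamma}{s}$ that annihilates the relevant chord requires combining a piecewise version of the intermediate value theorem with convexity of the tangent cone, together with small sanity checks that the resulting $w$ is nonzero and that $p$ genuinely lies in $\bd \cap \disk{\gamma(s)}{r}$ so that Definition \ref{def:tcgl} applies.
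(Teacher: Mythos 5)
Your proposal is correct and follows essentially the same route as the paper: Lemma \ref{2intlemma} for the lower bound, a contradiction at any non-transverse intersection $q$ via a tangent-cone direction orthogonal to the chord $p-q$ (using $p,q\in\bd\cap\disk{q}{r}$), and the piecewise intermediate-value argument on $\phi(s)=\langle v(s),p-\gamma(s)\rangle$ with convexity of the tangent cone handling corners to rule out extra crossings. Your derivation of the tangential case from one-sided derivatives of $|\gamma(t)-p|^2$ and your explicit unique-arc wrap-up are only minor presentational variants of the paper's argument.
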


\begin{dfn}
We say that $\shape$ has the two-arc property for a given radius $r$ if for every point $p \in \bd$, we have that $\disk{p}{r}$ divides $\bd$ into two connected arcs: $\bd \cap \disk{p}{r}$ and $\bd \backslash \disk{p}{r}$.
Instead of considering how $\disk{p}{r}$ divides $\bd$, we can equivalently frame the definition in terms of how $\bd$ divides $\cir{p}{r}$.
That is, $\shape$ has the two-arc property if the circle $\cir{p}{r}$ is divided into two connected arcs by $\bd$ for every $p \in \bd$.  \end{dfn}

\begin{cor}
If $\shape$ is \tcgl{} for some radius $r$, then it has the two-arc property.
\label{lem:tcgl-2arc}
\begin{proof}
This is a trivial consequence of Theorem \ref{thm:two-intersections}.
\end{proof}
\end{cor}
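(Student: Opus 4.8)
The plan is to read the conclusion straight off Theorem \ref{thm:two-intersections}, which already does all the work; the corollary is essentially bookkeeping. Fix $p \in \bd$. Theorem \ref{thm:two-intersections} supplies $|\bd \cap \cir{p}{r}| = 2$, say $\bd \cap \cir{p}{r} = \{a,b\}$ with $a \neq b$, together with transversality of the two crossings and the fact that any two points of $\bd \cap \disk{p}{r}$ are joined by an arc of $\bd$ lying in $\disk{p}{r}$.

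For the circle formulation of the two-arc property, I would observe that the two distinct points $a,b$ cut $\cir{p}{r}$ into exactly two open arcs, and since $\bd$ meets $\cir{p}{r}$ only at the endpoints $a$ and $b$ it cannot further subdivide either arc; hence $\cir{p}{r}\setminus\bd$ has exactly two connected components, which is what is asserted.

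For the boundary formulation, I would take $q_1 = p$ in the last clause of Theorem \ref{thm:two-intersections}: every point of $\bd\cap\disk{p}{r}$ lies on an arc of $\bd$ through $p$ contained in $\disk{p}{r}$, so $\bd\cap\disk{p}{r}$ is connected. It is a proper subset of $\bd$, since $\bd\subseteq\disk{p}{r}$ is forbidden by the graph-like hypothesis (this is exactly the contradiction used in the proof of Lemma \ref{2intlemma}: a simple closed curve is not the graph of a function in any orientation). A connected proper subset of the simple closed curve $\bd$ is an arc, and its complement $\bd\setminus\disk{p}{r}$ is therefore also a connected arc. As $p$ was arbitrary, $\shape$ has the two-arc property.

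I do not anticipate any genuine obstacle here; the only steps needing a sentence of care are the elementary topological facts that two distinct points split a circle (and a simple closed curve) into exactly two arcs and that a connected proper subset of $S^1$ has connected complement, together with the reminder that Lemma \ref{2intlemma} (via its proof) already excludes $\bd\subseteq\disk{p}{r}$.
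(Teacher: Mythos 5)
Your proposal is correct and follows the same route as the paper, which simply cites Theorem \ref{thm:two-intersections} and calls the corollary a trivial consequence; you have merely spelled out the elementary topological bookkeeping (two intersection points split the circle and the simple closed curve into two arcs) that the paper leaves implicit.
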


\begin{cor}
If $\shape$ is \tgl{} for some radius $r$, then it has the two-arc property for radius $r$.
\label{cor:tgl-2arc}
\end{cor}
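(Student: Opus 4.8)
The plan is to reduce this statement immediately to the \tcgl{} case already settled in Corollary \ref{lem:tcgl-2arc}, since the only real content lies there. First I would recall the remark following Definition \ref{def:tcgl}: when $\bd$ is $C^1$, the tangent cone $\tc{\gamma}{s}$ at each point $\gm{s}$ consists of a single direction, namely the tangent to $\bd$ at $\gm{s}$. Consequently, the \tgl{} and \tcgl{} conditions coincide on $C^1$ boundaries, and in particular every \tgl{} boundary is \tcgl{} for the same radius.

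Concretely, the one substantive step is: if $\bd$ is \tgl{} for radius $r$, then by definition $\bd$ is continuously differentiable and, for each $p\in\bd$, the set $\bd\cap\disk{p}{r}$ is the graph of a function in the orientation for which the tangent to $\bd$ at $p$ is horizontal. Because $\tc{\gamma}{s}$ degenerates to that single tangent line in the $C^1$ case, $\bd$ is then graph-like on $\disk{p}{r}$ for \emph{every} orientation in the tangent cone at $p$, which is exactly the \tcgl{} condition at $p$; ranging over all $p$ gives that $\bd$ is \tcgl{} for radius $r$. The second step is simply to apply Corollary \ref{lem:tcgl-2arc} to conclude that $\shape$ has the two-arc property for radius $r$.

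I do not expect any genuine obstacle here: the geometric work has all been absorbed into Theorem \ref{thm:two-intersections} and its corollary. The only point requiring a moment's care is that the reduction preserves the radius — and it does, because the tangent-cone reformulation of graph-likeness in Definition \ref{def:tcgl} is radius-for-radius identical to the \tgl{} requirement once the cone collapses to a line. So the proof is essentially a one-line deduction, and I would present it as such.
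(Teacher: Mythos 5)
Your proposal is correct and matches the paper's (implicit) argument exactly: the paper's remark after Definition \ref{def:tcgl} notes that TGL boundaries are TCGL since the tangent cone collapses to the single tangent direction on a $C^1$ boundary, and the corollary then follows from Corollary \ref{lem:tcgl-2arc} (i.e., Theorem \ref{thm:two-intersections}). Nothing further is needed.
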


\begin{figure}[htp!]
  \centering
  \input{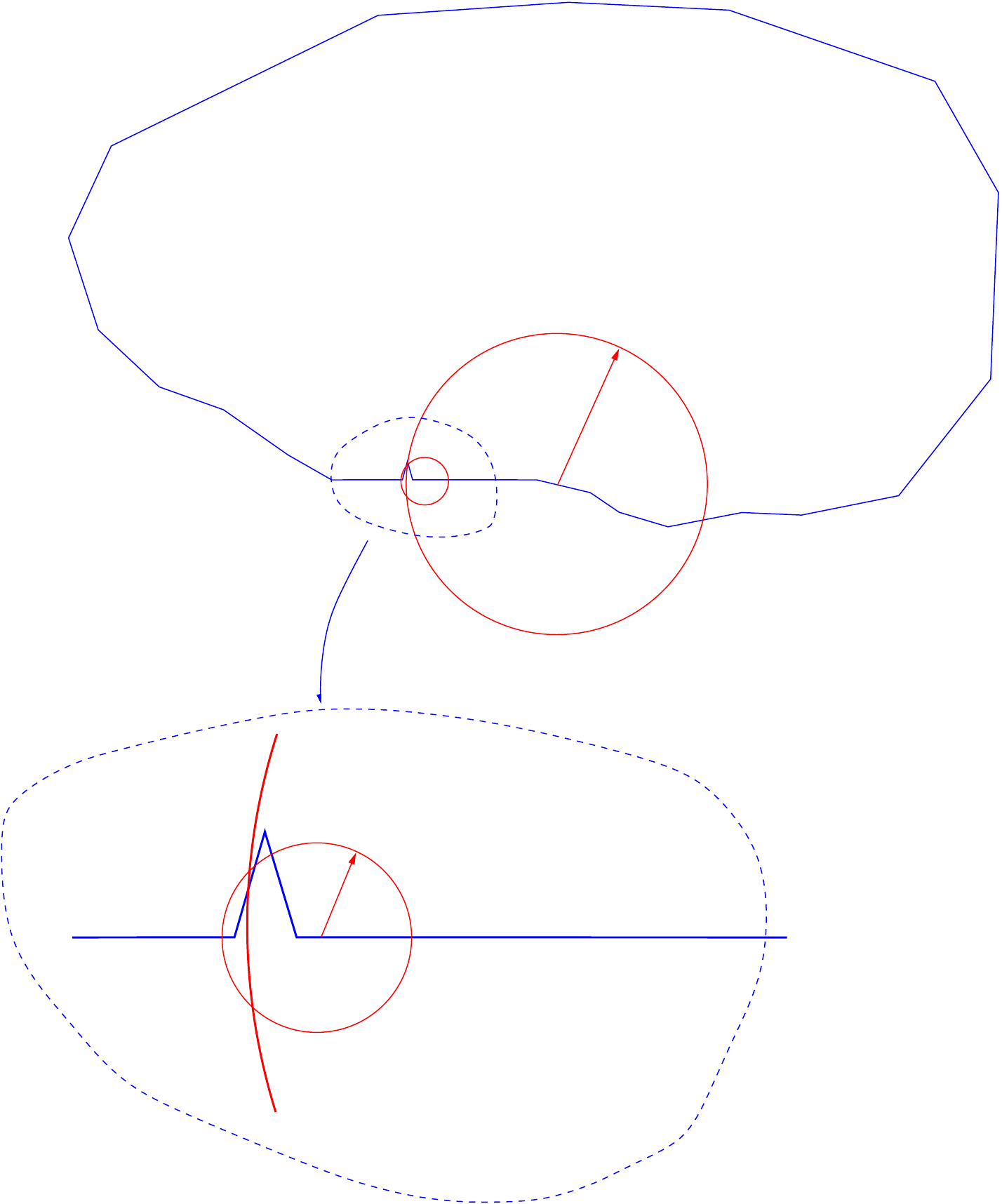_t}
  \caption{The two-arc property for $r = \hat{r}$
    does not imply that it holds for all $r < \hat{r}$}
 \label{fig:tgl_counter_ex1}
\end{figure}
\begin{rem}
\label{rem:2arc}
  While the assumption of the two-arc property for disks of radius $r =
  \hat{r}$ does not imply the two-arc property for all $r < \hat{r}$
  (see Figure~\ref{fig:tgl_counter_ex1}), it is the case that TGL for
  $r = \hat{r}$ does imply that $\gamma$ is TGL for all $0 < r <
  \hat{r}$. The fact that $\gamma$ is TGL for all $0 < r < \hat{r}$ follows
  easily from the definition of TGL and the fact that $\disk{p}{r} \subsetneq \disk{p}{\hat{r}}$.
\end{rem}

\subsection{Notation}
\begin{figure}[htp!]
  \centering
  \input{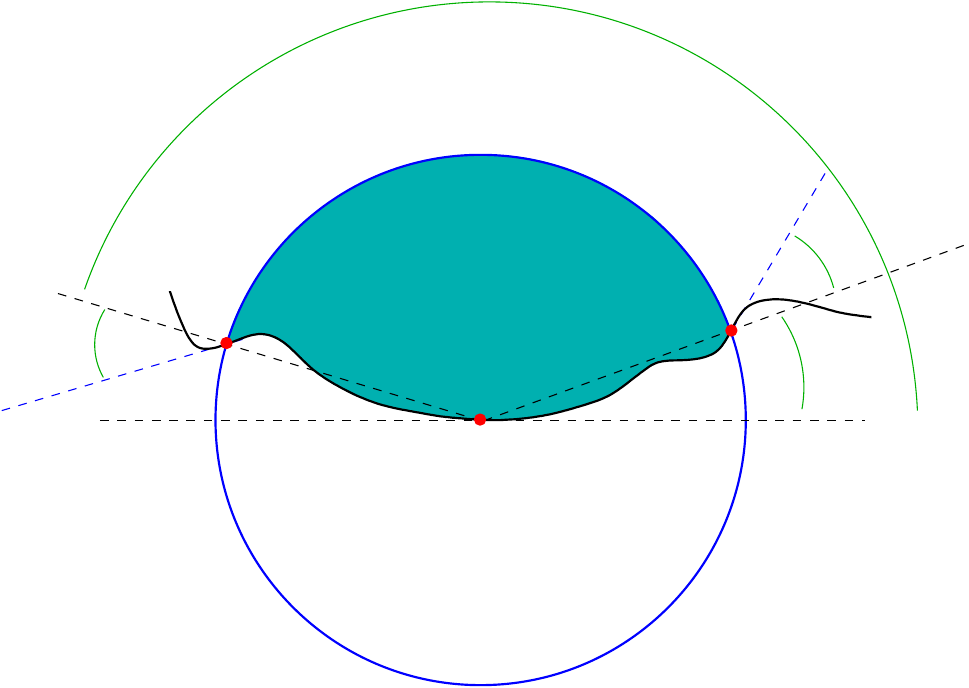_t}
  \caption{Notation and conventions}
\label{fig:step0}
\label{fig:notation}
\end{figure}

Suppose that $\bd$ is \tcgl{} with radius $r$ and we have some $s \in [0, L)$ such that $\bd$ is \tgl{} at $\gamma(s)$ with radius $r$.
Since $\bd$ is TGL at $\gamma(s)$, it has two points of intersection with $\cir{\gamma(s)}{r}$ by theorem \ref{thm:two-intersections}.
In the orientation forced by the TGL condition, one of these points of intersection must be on the right side of the circle and one must be on the left side.

With reference to figure \ref{fig:step0} we define $s^{+}(s)$ and $s^{-}(s) \in [0,L)$ so that $\gamma(s^{+}(s))$ is the point of intersection on the right and $\gamma(s^{-}(s))$ is the point of intersection on the left.
The notation is motivated by the fact that $0 < s^{-}(s) < s < s^{+}(s) < L$ in general due to our convention that $\gamma$ traverses $\bd$ counterclockwise.
The only case where this is not true is when $\gamma(L) = \gamma(0)$ is in the disk but even then it will hold for a suitably shifted $\hat{\gamma}$ that starts at some point outside the current disk.

The quantities $\theta_1(s)$ and $\theta_2(s)$ are the angles that the rays from the origin to the right and left points of intersection, respectively, make with the positive $x$ axis.
We can assume $\theta_1(s) \in (-\frac{\pi}{2},\frac{\pi}{2})$ and $\theta_2(s) \in (\frac{\pi}{2},\frac{3\pi}{2})$.

We define $\nu_1(s)$ as the angle between the vector $\gamma(s^{+}(s))-\gamma(s)$ and the vector $\lim_{t\downarrow s^{+}(s)}\gamma'(t)$, the one-sided tangent to $\bd$ at the point of intersection on the right.
That is, we are measuring the angle between the outward normal to the disk at the point of intersection and the actual direction $\gamma$ is going as it exits the disk.
We define $\nu_2(s)$ similarly.
We have $\nu_1, \nu_2 \in (-\frac{\pi}{2},\frac{\pi}{2})$ due to the fact that all circle crossings are transverse by theorem \ref{thm:two-intersections}.

When the proper $s$ to use is implied by context, we will often simply write $s^{+}$, $s^{-}$, $\theta_1$, $\theta_2$, $\nu_1$ and $\nu_2$ in place of $s^{+}(s)$, $s^{-}(s)$, and so forth.
\subsection{Calculus on Tangent Cones}

The following result is a version of the intermediate value theorem for elements of the tangent cones.
\begin{lem}
\label{lem:intvalue}
Suppose $\bd$ is \tcgl{} on $\disk{\gamma(s)}{r}$ and $s_1 < s_2$ such that $\gamma(s_1), \gamma(s_2) \in \disk{\gamma(s)}{r}$.
Further suppose that $w_1 \in \tc{\gamma}{s_1}$, $w_2 \in \tc{\gamma}{s_2}$, $\alpha \in (0, 1)$, and let $w' = \alpha w_1 + (1-\alpha)w_2$.
Then, there exists $s' \in [s_1, s_2]$ such that either $w'$ or $-w'$ is in $\tc{\gamma}{s'}$.
\begin{proof}
Let $n$ be a unit vector in $\mathbb{R}^2$ with $n \perp (\alpha w_1 + (1-\alpha)w_2)$.
We have $\alpha\langle n, w_1 \rangle = -(1-\alpha)\langle n, w_2 \rangle$.
It suffices to consider only $\langle n, w_1 \rangle \leq 0 \leq \langle n, w_2 \rangle$ as the argument is identical in the other case.
Note that since $0 \leq \langle n, w_2 \rangle = c_1\langle n, \lim_{t\uparrow s_2}\gamma'(t)\rangle + c_2\langle n, \lim_{t\downarrow s_2}\gamma'(t)\rangle$ for some nonnegative constants $c_1, c_2$ not both zero, at least one of the inner products on the right is nonnegative.
Using the notation of definition \ref{def:tcone}, we define $M_2 = \argmax_{\Gamma \in \{\Gamma^+, \Gamma^-\}} \langle n, \Gamma\rangle$ and have $\langle n, M_2 \rangle \geq 0$.
We similarly define $M_1$ with respect to $w_1$ such that $\langle n, M_1 \rangle \leq 0$.%

Define 
\[
v(t) = \begin{cases}
M_i, & t = s_i, i = 1, 2\\
\lim_{t\uparrow t} \gamma'(s)
\end{cases}
\]
and $\phi(t) = \langle n, v(t) \rangle$.
Since $\phi(s_1) \leq 0 \leq \phi(s_2)$, the argument proceeds as in theorem \ref{thm:two-intersections} to yield $\bar{s} \in [s_1, s_2]$ and $\bar{w} \in \tc{\gamma}{\bar{s}}$ such that $\langle n, \bar{w} \rangle = 0$.
Thus $\bar{w} = kw'$ for some $k \neq 0$.
In particular, $w' = \frac{1}{k}\bar{w}$ so either $w' \in \tc{\gamma}{\bar{s}}$ or $-w' \in \tc{\gamma}{\bar{s}}$ (depending on the sign of $k$).
\end{proof}
\end{lem}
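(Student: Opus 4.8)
The plan is to reduce the statement to a one-dimensional intermediate value argument, testing against the normal direction of $w'$, in the same spirit as the proof of Theorem~\ref{thm:two-intersections}. Fix a unit vector $n\in\mathbb{R}^2$ with $n\perp w'$. Since $w' = \alpha w_1 + (1-\alpha)w_2$ with $\alpha\in(0,1)$, we get $\alpha\langle n,w_1\rangle = -(1-\alpha)\langle n,w_2\rangle$, so $\langle n,w_1\rangle$ and $\langle n,w_2\rangle$ have opposite weak signs; without loss of generality assume $\langle n,w_1\rangle\le 0\le\langle n,w_2\rangle$. Because $w_1$ is a nonzero nonnegative combination of the two one-sided derivatives $\Gamma^-,\Gamma^+$ of $\gamma$ at $s_1$, at least one of $\langle n,\Gamma^-\rangle,\langle n,\Gamma^+\rangle$ is $\le 0$; pick such a one and call it $M_1\in\tc{\gamma}{s_1}$. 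Symmetrically pick $M_2\in\tc{\gamma}{s_2}$ with $\langle n,M_2\rangle\ge 0$.

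Next I would build a scalar test function. Define $v$ on $[s_1,s_2]$ by $v(s_i)=M_i$ for $i=1,2$ and, for every other $t$, by a fixed one-sided limit of $\gamma'$ at $t$ (these exist and lie in $\tc{\gamma}{t}$ because $\gamma$ is piecewise $C^1$; at points of differentiability this is simply $\gamma'(t)$). Set $\phi(t)=\langle n,v(t)\rangle$. Then $\phi$ is piecewise continuous with only finitely many jumps — one per corner of $\gamma$ in $[s_1,s_2]$ — and $\phi(s_1)\le 0\le\phi(s_2)$. Invoking the piecewise intermediate value theorem used in Theorem~\ref{thm:two-intersections}, there is $\bar s\in[s_1,s_2]$ with
\[
\lim_{t\to\bar s^-}\phi(t)\ \le\ 0\ \le\ \lim_{t\to\bar s^+}\phi(t)
\]
(the reversed inequalities being handled identically); by continuity of the inner product and of $\gamma$ these one-sided limits equal $\langle n,\lim_{t\to\bar s^-}\gamma'(t)\rangle$ and $\langle n,\lim_{t\to\bar s^+}\gamma'(t)\rangle$.

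A short case analysis at $\bar s$ then finishes. If $\gamma$ is differentiable at $\bar s$, both one-sided limits equal $\langle n,\gamma'(\bar s)\rangle$, forcing it to be $0$; take $\bar w=\gamma'(\bar s)$. Otherwise, with $w_1^\star=\lim_{t\to\bar s^-}\gamma'(t)$ and $w_2^\star=\lim_{t\to\bar s^+}\gamma'(t)$ — both in the convex cone $\tc{\gamma}{\bar s}$ and satisfying $\langle n,w_1^\star\rangle\le 0\le\langle n,w_2^\star\rangle$ — the continuous function $\lambda\mapsto\langle n,\lambda w_1^\star+(1-\lambda)w_2^\star\rangle$ vanishes at some $\bar\lambda\in[0,1]$, so $\bar w=\bar\lambda w_1^\star+(1-\bar\lambda)w_2^\star\in\tc{\gamma}{\bar s}$ is nonzero and orthogonal to $n$. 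In $\mathbb{R}^2$ this means $\bar w=kw'$ for some $k\ne 0$; as tangent cones are closed under multiplication by positive scalars, $w'\in\tc{\gamma}{\bar s}$ when $k>0$ and $-w'\in\tc{\gamma}{\bar s}$ when $k<0$, which is the claim with $s'=\bar s$.

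I expect the main obstacle to be bookkeeping rather than ideas: making the piecewise intermediate value theorem precise (it uses the finiteness of the corner set guaranteed by piecewise $C^1$, plus care that the vectors appearing at the crossing parameter $\bar s$ are genuine one-sided limits of $\gamma'$ and therefore belong to $\tc{\gamma}{\bar s}$), and tracking weak versus strict inequalities so that the degenerate cases — $\bar s$ equal to $s_1$ or $s_2$, or one of $\langle n,w_i\rangle$ being exactly $0$ — are all covered. Note also that we can only recover $\pm w'$, not $w'$ itself: testing against $n$ is insensitive to orientation, which is why the conclusion is phrased with ``$w'$ or $-w'$''; and the argument uses only that $\gamma$ is piecewise $C^1$ with convex planar tangent cones, so the \tcgl{} hypothesis, though natural for how the lemma is later applied, is not actually invoked in this proof.
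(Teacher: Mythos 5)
Your proposal is correct and follows essentially the same route as the paper's own proof: choose a unit normal $n$ to $w'$, pick one-sided derivatives $M_1,M_2$ with opposite-signed inner products, run the piecewise intermediate value argument from Theorem~\ref{thm:two-intersections} on $\phi(t)=\langle n,v(t)\rangle$, and use convexity of the tangent cone at $\bar s$ to produce $\bar w=kw'$ with $k\neq 0$. The only quibble is your closing remark that the \tcgl{} hypothesis is never used: to know $\bar w\neq 0$ at a corner (so that $k\neq 0$) one must exclude a cusp, where $\lim_{t\uparrow\bar s}\gamma'(t)=-\lim_{t\downarrow\bar s}\gamma'(t)$ and the convex combination could vanish, and in the paper cusps are ruled out precisely by the \tcgl{} assumption.
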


In addition to the intermediate value theorem, we have an analogous mean value theorem for tangent cone elements.

\begin{lem}
\label{lem:meanvalue}
Suppose $\gamma:[a,b] \rightarrow \mathbb{R}^2$ is a simple, arc-length parameterized curve with piecewise continuous derivative defined on $(a,b)$ except possibly on finitely many points.
Further suppose that the image of $\gamma$ has no cusps.
Then there exists $c$ in $(a,b)$ such that either $\gamma(b)-\gamma(a)$ or $-(\gamma(b)-\gamma(a))$ is in $T_\gamma(c)$.
\begin{proof}
Let $n$ be a unit vector with $\langle \gamma(b)-\gamma(a), n \rangle = 0$.
Consider $\psi(t) = \langle \gamma(t) , n \rangle$ and note that $\psi'(t) = \langle \gamma'(t), n \rangle$ is defined wherever $\gamma(t)$ is differentiable.
We have $\int_a^b \psi'(t) = \psi(b) - \psi(a) = \langle \gamma(b) - \gamma(a), n \rangle = 0$.
Thus, either $\psi'(t) = 0$ everywhere it is defined or it takes on both positive and negative values.
In particular, there exists a point $c \in (a,b)$ such that either $\psi'(c) = 0$ or $\lim_{t\uparrow c} \psi'(t) \leq 0 \leq \lim_{t\downarrow c} \psi'(t)$.

If $\psi'(c) = 0$, then we have $\langle \gamma'(c), n \rangle = 0$ so that $\gamma'(c) = k(\phi(b) - \phi(a))$ for some $k \neq 0$.
As $\gamma'(c) \in \tc{\gamma}{c}$, we have $\frac{k}{|k|}(\phi(b) - \phi(a)) \in \tc{\gamma}{c}$ which gives us our conclusion.

If $\lim_{t\uparrow c} \psi'(t) \leq 0 \leq \lim_{t\downarrow c} \psi'(t)$, there exists $\alpha \in (0, 1)$ such that $0 = \alpha \lim_{t\uparrow c} \psi'(t) + (1-\alpha)\lim_{t\downarrow c} \psi'(t)$.
Note that $\lim_{t\uparrow c} \psi'(t) = \langle w_1, n \rangle$ and $\lim_{t\downarrow c} \psi'(t) = \langle w_2, n \rangle$ for some $w_1, w_2 \in \tc{\gamma}{c}$ and let $w' = \alpha{w_1} + (1-\alpha){w_2}$.

By the convexity of $\tc{\gamma}{c}$, we have $w' \in \tc{\gamma}{c}$ with $\langle w', n\rangle = 0$ which follows as in the previous case.
\end{proof}
\end{lem}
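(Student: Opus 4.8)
The plan is to adapt Rolle's theorem by projecting $\gamma$ onto the direction orthogonal to the chord $\gamma(b)-\gamma(a)$. Concretely, I would pick a unit vector $n$ with $\langle \gamma(b)-\gamma(a), n\rangle = 0$ (this is well defined up to sign since $\gamma$ is simple on $[a,b]$, so $\gamma(a)\neq\gamma(b)$) and set $\psi(t) = \langle \gamma(t), n\rangle$. Then $\psi$ is continuous on $[a,b]$, piecewise $C^1$, with $\psi(b)=\psi(a)$, and the conclusion ``$\pm(\gamma(b)-\gamma(a)) \in \tc{\gamma}{c}$'' is equivalent to finding $c$ at which some element of $\tc{\gamma}{c}$ is orthogonal to $n$ (and nonzero). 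So the whole problem reduces to a mean-value/Rolle statement for the scalar function $\psi$.

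The key steps would be as follows. Since $\psi'$ is bounded and defined off a finite subset of $(a,b)$, we have $\int_a^b \psi'(t)\,dt = \psi(b)-\psi(a) = 0$, so either $\psi'$ vanishes everywhere it is defined, or it assumes both a strictly positive and a strictly negative value. In the first case, choose any $c\in(a,b)$ where $\gamma$ is differentiable; then $\langle\gamma'(c),n\rangle=0$, and since we are in $\mathbb{R}^2$ this forces $\gamma'(c)$ to be a scalar multiple of $\gamma(b)-\gamma(a)$, with $|\gamma'(c)|=1\neq 0$ by arc-length parameterization, so $\tfrac{1}{|k|}\gamma'(c) = \tfrac{k}{|k|}(\gamma(b)-\gamma(a)) \in \tc{\gamma}{c}$. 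In the second case I would invoke the piecewise-continuous intermediate value argument already used in the proof of Theorem \ref{thm:two-intersections}: ordering the finitely many non-differentiability points and using that $\psi'$ is continuous on each piece, a sign change of $\psi'$ occurs either inside a smooth piece (ordinary IVT yields a zero of $\psi'$, handled as in the first case) or across one of those points, producing $c\in(a,b)$ whose one-sided limits $\lim_{t\uparrow c}\psi'(t)$ and $\lim_{t\downarrow c}\psi'(t)$ straddle $0$. Writing $w_1=\lim_{t\uparrow c}\gamma'(t)$, $w_2=\lim_{t\downarrow c}\gamma'(t)\in\tc{\gamma}{c}$, convexity of the tangent cone lets me pick $\alpha\in(0,1)$ with $w' = \alpha w_1 + (1-\alpha)w_2 \in \tc{\gamma}{c}$ orthogonal to $n$, hence parallel to the chord; normalizing $w'$ finishes it.

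The step I expect to be the main obstacle, and the one that forces the ``no cusps'' hypothesis, is guaranteeing $w'\neq 0$ in this last case. If $c$ were a cusp, then $w_2 = -w_1$, and the only nonnegative combination of $w_1,w_2$ orthogonal to $n$ would be $w' = \tfrac12 w_1 + \tfrac12 w_2 = 0$, which carries no directional information and fails the conclusion; ruling out cusps means $w_1$ and $w_2$ are not antipodal, so every nontrivial nonnegative combination of them — in particular $w'$ — is nonzero. The only other thing requiring care is stating the ``piecewise intermediate value theorem'' for $\psi'$ precisely (a finite case analysis over the non-differentiability points), but this is routine and parallels what was already done for Theorem \ref{thm:two-intersections}.
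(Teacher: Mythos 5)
Your proposal is correct and follows essentially the same route as the paper's proof: project onto a unit vector $n$ orthogonal to the chord, use $\int_a^b \psi' = 0$ to find either a point where $\psi'$ vanishes or a point where the one-sided limits of $\psi'$ straddle zero, and then use convexity of $\tc{\gamma}{c}$ to produce an element orthogonal to $n$, hence parallel to $\gamma(b)-\gamma(a)$. You are in fact slightly more careful than the paper, which leaves implicit the point you single out --- that the no-cusp hypothesis is precisely what guarantees the convex combination $w'$ is nonzero in the second case.
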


The following lemma tells us that the \tcgl{} condition is sufficient to apply lemma \ref{lem:meanvalue}.

\begin{lem}
If $\bd$ is \tcgl{} for some radius $r$, then $\bd$ has no cusps.
\begin{proof}
Suppose $\bd$ has a cusp at $\gamma(s)$.
Then, using the terminology of definition \ref{def:tcone} and the fact that $\gamma$ is arc length parameterized, we have $\Gamma^+ = -\Gamma^-$.
We let $w = 0$ and note that $w = \Gamma^+ + \Gamma^- \in \tc{\gamma}{s}$.
Letting $u,v\in\bd\cap\disk{\gamma(s)}{r}$ with $u \neq v$, we have $\langle w, u-v\rangle = 0$, contradicting the fact that $\bd$ is \tcgl{}.
Therefore, $\bd$ has no cusps.
\end{proof}
\end{lem}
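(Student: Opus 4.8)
The plan is to argue by contradiction, exploiting the fact that an arc-length parametrization turns a cusp into an \emph{antipodal} pair of one-sided tangents, which in turn puts the zero vector into the tangent cone --- and the \tcgl{} condition explicitly forbids the zero vector from being there.

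First I would unwind what a cusp means for our $\gamma$. Since $\gamma$ is arc-length parametrized, the one-sided limits $\Gamma^- = \lim_{t\uparrow s}\gamma'(t)$ and $\Gamma^+ = \lim_{t\downarrow s}\gamma'(t)$ from Definition \ref{def:tcone} are unit vectors, and at a cusp point $\gamma(s)$ the curve reverses direction, so $\Gamma^+ = -\Gamma^-$. Next I would observe that the choice $\alpha = \beta = 1$ is legitimate in Definition \ref{def:tcone} (the only excluded case is $\alpha = \beta = 0$), and hence $\Gamma^- + \Gamma^+ = \Gamma^- + (-\Gamma^-) = 0$ lies in $\tc{\gamma}{s}$. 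So the tangent cone at a cusp contains $0$.

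Then I would supply two distinct test points: by Lemma \ref{2intlemma}, the circle $\cir{\gamma(s)}{r}$ meets $\bd$ in at least two points, so $\bd \cap \disk{\gamma(s)}{r}$ contains distinct points $u \neq v$. Applying the \tcgl{} condition at $\gamma(s)$ to the tangent-cone vector $w = 0$ would demand $\langle w, u - v\rangle \neq 0$, yet $\langle 0, u - v\rangle = 0$ --- a contradiction. Therefore $\bd$ has no cusps.

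There is essentially no real obstacle here; the proof is a one-line observation once the right framing is in place. The only two points that deserve care are (i) confirming that the zero vector genuinely qualifies as an element of $\tc{\gamma}{s}$ under Definition \ref{def:tcone}, so that the \tcgl{} inequality really does apply to it, and (ii) making sure we actually have two distinct points of $\bd$ inside the disk to witness the contradiction, which is exactly what Lemma \ref{2intlemma} guarantees.
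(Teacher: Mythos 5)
Your proof is correct and follows essentially the same route as the paper's: at a cusp the arc-length parametrization forces $\Gamma^+ = -\Gamma^-$, so $0 = \Gamma^- + \Gamma^+ \in \tc{\gamma}{s}$, and pairing the zero vector with any two distinct points of $\bd \cap \disk{\gamma(s)}{r}$ violates the \tcgl{} inequality. Your explicit appeal to Lemma \ref{2intlemma} to guarantee two distinct points is a small extra justification the paper leaves implicit, but the argument is the same.
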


\subsection{TCGL Boundary Properties}

The following technical lemmas allow us to bound various distances and areas encountered in \tcgl{} boundaries.

\begin{lem}
Suppose that $\bd$ is \tcgl{} with radius $r$ and points $p_1, p_2 \in \bd$ with $d(p_1, p_2) < r$.
Then one of the arcs (call it $P$) along $\bd$ between $p_1$ and $p_2$ is such that, for any two points $q_1, q_2 \in P$, we have $d(q_1, q_2) < r$.
\begin{proof}
Note that $p_2 \in \disk{p_1}{r}$ so that there is an arc along $\bd$ from $p_1$ to $p_2$ which is fully contained in the interior of $\disk{p_1}{r}$ by theorem \ref{thm:two-intersections}.
We will call this arc $P$.

For all $x$ on $P$, let $P_x$ denote the subpath of $P$ from $p_1$ to $x$ (so $P = P_{p_2}$).
We claim that $P_x$ is contained in $\disk{x}{r}$ for all $x$ on $P$ (thus, $P$ is contained in $\disk{p_2}{r}$).
Indeed, if this were not the case, then there must be some $\hat{x}$ on $P$ such that $P_{\hat{x}}$ is contained in $\disk{\hat{x}}{r}$ but $\cir{\hat{x}}{r} \cap P_{\hat{x}}$ is nonempty (i.e., we can move the disk along $P$ until some part of the subpath hits the boundary).
That is, the subpath $P_{\hat{x}}$ has a tangency with the disk $\disk{\hat{x}}{r}$ which is impossible because of theorem \ref{thm:two-intersections}.

Let $q_1 \in P$ and note that since $P_x$ is contained in $\disk{x}{r}$ for all $x$ on $P$, we have that $P$ is contained in $\disk{q_1}{r}$.
Therefore, $d(q_1, q_2) < r$ for all $q_1, q_2 \in P$ as desired.
\end{proof}
\end{lem}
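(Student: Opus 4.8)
The plan is to construct the arc $P$ from Theorem~\ref{thm:two-intersections} and then control all pairwise distances along it with a ``sliding disk'' continuity argument.

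First I would observe that $d(p_1,p_2)<r$ puts $p_2$ in the interior of $\disk{p_1}{r}$, so by Theorem~\ref{thm:two-intersections} there is a unique arc of $\bd$ joining $p_1$ to $p_2$ inside $\disk{p_1}{r}$; call it $P$. That theorem further tells us $\bd$ meets $\cir{p_1}{r}$ only at two transverse crossings, both distinct from $p_1$ (the centre) and $p_2$ (interior); since $P$ is a subarc of $\bd$ that stays in $\disk{p_1}{r}$ and has both endpoints strictly inside, it cannot reach $\cir{p_1}{r}$ at all. Hence $P$ lies in the \emph{open} disk, so $d(p_1,x)<r$ for every $x\in P$. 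I will use this fact below.

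Second, for $x\in P$ let $P_x$ be the subarc of $P$ from $p_1$ to $x$, so $P=P_{p_2}$. The core claim is that $P_x\subseteq\operatorname{int}\disk{x}{r}$ for every $x\in P$, which I would prove by contradiction. The function $x\mapsto\max_{y\in P_x}d(x,y)$ is continuous (because $P_x$ varies continuously in the Hausdorff metric as $x$ moves along $P$, and $d$ is continuous) and vanishes at $x=p_1$; if the claim failed it would attain the value $r$, so let $\hat x$ be the first point of $P$ where it does, and pick $y_0\in P_{\hat x}$ with $d(\hat x,y_0)=r$. Then $P_{\hat x}\subseteq\overline{\disk{\hat x}{r}}$ and $y_0\in\bd\cap\cir{\hat x}{r}$. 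Now $y_0\neq\hat x$ (the distance is $r>0$) and $y_0\neq p_1$ (otherwise $d(\hat x,p_1)=r$, contradicting $d(p_1,x)<r$ on $P$), so $y_0$ is an interior point of the subarc $P_{\hat x}$; hence $\bd$ stays inside $\overline{\disk{\hat x}{r}}$ near $y_0$ while touching $\cir{\hat x}{r}$ there, a tangency. Since $\bd$ is \tcgl{} with radius $r$, Theorem~\ref{thm:two-intersections} applies at $\hat x$ and forbids this, so the claim holds.

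The lemma then follows: given $q_1,q_2\in P$, order them along $P$; if $q_1$ precedes $q_2$ then $q_1\in P_{q_2}\subseteq\operatorname{int}\disk{q_2}{r}$, i.e.\ $d(q_1,q_2)<r$. The step I expect to be the main obstacle is making the sliding-disk argument rigorous --- in particular ruling out a ``first tangency'' occurring at the endpoint $p_1$ of the subarc rather than at an interior point where transversality gives the contradiction. Extracting beforehand that $P$ lies in the \emph{open} disk $\disk{p_1}{r}$ is exactly what disposes of this case; the remaining ingredients (continuity of the maximum-distance function, and the observation that a subarc of $\bd$ lying in a closed disk and meeting its circle must do so tangentially) are routine.
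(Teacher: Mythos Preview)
Your proposal is correct and follows essentially the same approach as the paper: define $P$ as the arc inside $\disk{p_1}{r}$ via Theorem~\ref{thm:two-intersections}, then use a sliding-disk argument to show $P_x\subseteq\disk{x}{r}$ for all $x\in P$ by deriving a forbidden tangency at the first failure. You are in fact somewhat more careful than the paper in two places --- explicitly ruling out the endpoint case $y_0=p_1$ via the open-disk observation, and drawing the final conclusion by ordering $q_1,q_2$ rather than the paper's slightly imprecise claim that all of $P$ lies in $\disk{q_1}{r}$.
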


\begin{lem}
\label{lem:arcbound}
If $q_1 = \gamma(s_1), q_2 = \gamma(s_2) \in P$ where $P$ is as in the previous lemma, then the arc length between $q_1$ and $q_2$ along $P$ is at most $\sqrt{2}d(q_1, q_2)$.
\begin{proof}
Since $\shape$ is \tgl{}, for any $w_1 \in \tc{\gamma}{s_1}, w_2 \in \tc{\gamma}{s_2}$, the angle between $w_1$ and $w_2$ is at most $\frac{\pi}{2}$.
Since this is true for all $q \in P$, there is a point $q' = \gamma(s') \in P$ and $w' \in \tc{\gamma}{s'}$ such that the angle between $w'$ and tangent vectors for any other point $q \in P$ is at most $\frac{\pi}{4}$.

This means that $P$ is the graph of a Lipschitz function $g$ of rank 1 in the orientation defined by $w'$.
This does not necessarily imply that $\disk{q'}{r} \cap \bd$, $\disk{p_1}{r}\cap \bd$ or $\disk{p_2}{r}\cap \bd$ is the graph of a Lipschitz function; we explore a Lipschitz condition for the disks in section \ref{sec:tgl-ncgl}.
Let $x_1, x_2 \in [-r, r]$ with $p_1 = (x_1, g(x_1))$, $p_2 = (x_2, g(x_2))$.
Then the arclength from $p_1$ to $p_2$ is given by 
\[
\int_{x_1}^{x_2} \sqrt{1 + g'(x)^2}\,dx \leq \int_{x_1}^{x_2} \sqrt{2}\,dx = \sqrt{2}(x_2-x_1) \leq \sqrt{2}d(p_1, p_2).\qedhere
\]
\end{proof}
\end{lem}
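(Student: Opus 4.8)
The plan is to find one Cartesian orientation in which all of $P$ is the graph of a $1$-Lipschitz function, after which the bound is immediate from the arclength formula. First, by the previous lemma $P \subseteq \disk{q}{r}$ for every $q \in P$, so for each $\gamma(s)$ on $P$ we have $P \subseteq \bd \cap \disk{\gamma(s)}{r}$; hence by the \tcgl{} hypothesis (Definition~\ref{def:tcgl}), $P$ is graph-like with respect to every direction $w \in \tc{\gamma}{s}$. Being a connected arc that is a graph over the line spanned by $w$, the map $t \mapsto \langle \gamma(t), w\rangle$ is strictly monotone along $P$; replacing $w$ by $-w$ if necessary we may assume it is strictly increasing, and this choice of ``forward'' orientation is consistent across all such $w$ because it is dictated by the direction in which $\gamma$ traverses $P$.

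Next I would show that the set $V$ of all these forward-oriented tangent-cone directions (over all points of $P$) subtends an angle of at most $\pi/2$. If $w \in V$ and $\gamma(t)$ is any point of $P$, then strict monotonicity of $t\mapsto\langle\gamma(t),w\rangle$ forces its one-sided derivatives to be nonnegative, so $\langle w', w\rangle \geq 0$ for every $w'\in\tc{\gamma}{t}$, hence for every $w' \in V$. Thus any two elements of $V$ make an angle $\leq \pi/2$; fixing one $u_0\in V$ puts $V$ in the closed semicircle $\{v : \langle u_0,v\rangle\geq 0\}$, and within that semicircle no two elements can be more than $\pi/2$ apart without their inner product going negative, so $V$ lies in a closed arc of angular width at most $\pi/2$. (This is where the \tcgl{} hypothesis is really used, via the graph-like property; alternatively it can be obtained from Lemma~\ref{lem:intvalue} by noting that two tangent directions along $P$ at angle $>\pi/2$ would have a convex combination equal, up to sign, to a tangent-cone direction at an intermediate point of $P$ that is orthogonal to a chord of $P$.)

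Let $w_0$ be the direction bisecting the arc containing $V$, so that every tangent-cone direction along $P$ makes an angle $\leq \pi/4$ with $w_0$. In the Cartesian frame whose positive $x$-axis is $w_0$, the arc $P$ is then the graph $y=g(x)$ of a function with $|g'(x)|\leq\tan(\pi/4)=1$ wherever $g'$ exists (which is all but finitely many $x$). Writing $q_1=(x_1,g(x_1))$ and $q_2=(x_2,g(x_2))$, the arclength along $P$ between $q_1$ and $q_2$ is
\[
\int_{x_1}^{x_2}\sqrt{1+g'(x)^2}\,dx \;\leq\; \sqrt{2}\,|x_2-x_1| \;\leq\; \sqrt{2}\,d(q_1,q_2),
\]
since $|x_2-x_1|=|\langle q_2-q_1,w_0\rangle|\leq|q_2-q_1|$. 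The main obstacle is the width-$\pi/2$ claim of the second paragraph: one must carry the ``forward'' orientation of the tangent cones coherently across corners, and the passage from ``graph-like in every tangent direction'' to the quantitative angular bound needs the elementary argument indicated above; the rest is routine.
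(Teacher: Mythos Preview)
Your proof is correct and follows essentially the same approach as the paper: establish that tangent-cone directions along $P$ vary by at most $\pi/2$, choose an orientation within $\pi/4$ of all of them so that $P$ becomes the graph of a $1$-Lipschitz function, and then apply the arclength formula. The only difference is cosmetic---you take $w_0$ to be the bisector of the arc of tangent directions, whereas the paper invokes an intermediate-value-type observation to locate an actual tangent direction $w'\in\tc{\gamma}{s'}$ at some point $q'\in P$ that serves as the ``middle'' direction; either choice yields the needed $1$-Lipschitz bound.
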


\begin{lem}
\label{lem:areabound}
If $\gamma$ is \tcgl{} with radius $r$ and $0 \leq s_1 \leq s_2 < L$ with $d(\gamma(s_1), \gamma(s_2)) = \delta < r$, then the image of $[s_1, s_2]$ together with the straight line from $\gamma(s_1)$ to $\gamma(s_2)$ enclose a region with $O(\delta^2)$ area.
\begin{proof}
By Lemma \ref{lem:arcbound}, we have that the image of $[s_1, s_2]$ under $\gamma$ has arc length $s_2 - s_1 \leq \sqrt{2}\delta$.
Therefore, the region of interest has perimeter at most $(\sqrt{2}+1)\delta$ so by the isoperimetric inequality has area at most $\frac{(\sqrt{2}+1)^2}{4\pi}\delta^2$ from which the conclusion follows.
\end{proof}
\end{lem}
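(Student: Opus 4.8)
The plan is to reduce the area estimate to a containment-in-a-small-disk argument, routing everything through Lemma \ref{lem:arcbound}. First I would apply the lemma preceding Lemma \ref{lem:arcbound} with $p_1 = \gamma(s_1)$ and $p_2 = \gamma(s_2)$; this is legitimate because $\delta < r$, and it produces the arc $P$ between them that is contained in $\disk{\gamma(s_1)}{r}$. In the setting in which the present lemma is applied, $\gamma([s_1,s_2])$ is exactly this arc $P$: I would either record that as a standing assumption or remark that the complementary ``long'' arc cannot be intended, since the region it cuts off would be essentially all of $\shape$ rather than something of size $\delta$. Having identified $\gamma([s_1,s_2])$ with $P$, Lemma \ref{lem:arcbound} gives that this arc has length $s_2 - s_1 \le \sqrt{2}\,\delta$.

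Next I would note that, since $\gamma$ is parameterized by arc length, every point $\gamma(t)$ with $t \in [s_1,s_2]$ is joined to $\gamma(s_1)$ by a sub-arc of length $t - s_1 \le \sqrt{2}\,\delta$, hence lies within Euclidean distance $\sqrt{2}\,\delta$ of $\gamma(s_1)$; and every point of the straight segment from $\gamma(s_1)$ to $\gamma(s_2)$ lies within distance $\delta$ of $\gamma(s_1)$. Therefore the closed curve $\Gamma$ formed by the arc together with the segment is contained in $\disk{\gamma(s_1)}{\sqrt{2}\,\delta}$. Since the complement of that disk is connected, unbounded, and disjoint from $\Gamma$, it lies in the unbounded complementary component of $\Gamma$, so the bounded region enclosed by the arc and the segment is contained in $\disk{\gamma(s_1)}{\sqrt{2}\,\delta}$; hence that region has area at most $2\pi\delta^2 = O(\delta^2)$. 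If one prefers the cleaner constant $\frac{(\sqrt{2}+1)^2}{4\pi}$ and is willing to verify that the arc and the segment meet only at their two endpoints (so that $\Gamma$ is a simple closed curve), one can instead bound the perimeter of the enclosed region by $(\sqrt{2}+1)\delta$ and apply the isoperimetric inequality; neither refinement is needed for the stated $O(\delta^2)$ conclusion.

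The only delicate point is the first step: ensuring that $\gamma([s_1,s_2])$ is the short arc $P$ and not its long complement, since only then does Lemma \ref{lem:arcbound} apply and only then is the enclosed area genuinely of order $\delta^2$. In the places where this lemma is invoked, $s_1$ and $s_2$ are close in arc length — typically consecutive parameter values at which $\gamma$ crosses a disk of radius $r$ — so this holds, but it is worth stating explicitly; the hypothesis $\delta < r$ is used only to make the preceding lemma and Lemma \ref{lem:arcbound} applicable, and everything after that is the one-line disk-containment estimate above.
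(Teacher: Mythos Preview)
Your argument is correct and shares the paper's first step: invoke Lemma~\ref{lem:arcbound} to bound the arc length by $\sqrt{2}\,\delta$. After that the two proofs diverge. The paper adds the chord length $\delta$ to get a perimeter bound of $(\sqrt{2}+1)\delta$ and applies the isoperimetric inequality directly, obtaining area at most $\frac{(\sqrt{2}+1)^2}{4\pi}\delta^2$. You instead observe that the arc and chord both lie in $\disk{\gamma(s_1)}{\sqrt{2}\,\delta}$ and conclude that the enclosed region sits inside that disk, giving area at most $2\pi\delta^2$. Your route yields a worse constant but has the advantage that it does not require the closed curve $\Gamma$ to be simple; the paper's appeal to the isoperimetric inequality tacitly assumes this (or at least that the arc and chord together bound a region in the usual sense), which is not verified. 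Your observation that one must know $\gamma([s_1,s_2])$ is the short arc $P$ rather than its complement is also well taken: the paper's proof silently assumes this when it invokes Lemma~\ref{lem:arcbound}, and as you note it is justified in the applications but not by the hypotheses as stated.
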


\section{TCGL polygonal approximations}
\label{sec:tgl-ncgl}

If $\shape$ is \tcgl{} with radius $r$, it can sometimes be nice to know that there is an approximating polygon to $\shape$ which is also \tcgl{}.
The following lemmas explore this idea.

\begin{lem}
If $\bd$ is TCGL with radius $r$ then for each $\epsilon \in (0, r)$, then there exists a polygonal approximation to $\bd$ that is TCGL with radius $r - \epsilon$ and such that every point on $\bd$ is within distance $\frac{\epsilon}{6}$ of the polygon.
\label{lem:tcgl-poly}
\begin{proof}
First, choose a finite number of points along the boundary such that the arc length along $\gamma$ between any two neighboring points is no more than $\frac{\epsilon}{3}$.
These will be the vertices of our polygon.
Similarly to $\gamma$, we let $\phi$ be an arclength parameterization of this polygon so that they both encounter their common points in the same order.

The fine spacing between vertices guarantees that we obtain the $\frac{\epsilon}{6}$ bound.
Indeed, given any point $p \in \bd$ and its neighboring vertices $v_1$ and $v_2$, the arc length along $\bd$ from $v_1$ to $p$ plus that from $p$ to $v_2$ is at most $\frac{\epsilon}{3}$ by assumption.
Since Euclidean distance is bounded above by arc length, we have $d(p, v_1) + d(p, v_2) \leq \frac{\epsilon}{3}$.
This bound in turn implies that at least one of $d(p,v_1)$ and $d(p,v_2)$ is bounded above by $\frac{\epsilon}{6}$.

Consider a point $p = \phi(t)$ on a side of the polygon (i.e., not a vertex) and its neighboring vertices $v_1 = \phi(t_1) = \gamma(s_1)$ and $v_2 = \phi(t_2) = \gamma(s_2)$ (chosen with $t_1 < t < t_2$ and $s_1 < s_2$).
By lemma \ref{lem:meanvalue}, there exists $s \in (s_1, s_2)$ such that $v_2 - v_1 \in T_\gamma(s)$.
Note that this is the only member of $T_\phi(t)$ up to positive scalar multiplication.

Combining the arcs along $\gamma$ and $\phi$ between $v_1$ and $v_2$, we obtain a closed curve with total length at most $\frac{2\epsilon}{3}$, so that the distance between any two points on the curve is at most $\frac{\epsilon}{3}$.
That is, for any $s' \in [s_1, s_2]$ and $t' \in [t_1, t_2]$, we have $d(\gamma(s'), \phi(t')) \leq \frac{\epsilon}{3}$.

Let $x \in \disk{\phi(t)}{r-\epsilon}$.
Then $d(x, \gamma(s)) \leq d(x, \phi(t)) + d(\phi(t), \gamma(s)) \leq r - \frac{2\epsilon}{3}$ so that $\disk{\phi(t)}{r-\epsilon}$ is contained in $\disk{\gamma(s)}{r - \frac{2\epsilon}{3}}$.

Let $a, b$ be distinct points on the polygon in $\disk{\phi(t)}{r-\epsilon}$ and consider the line connecting them.
This line also intersects $a', b'$ on $\gamma$ such that we have $a'\neq b'$, $d(a, a') \leq \frac{\epsilon}{3}$ and $d(b,b') \leq \frac{\epsilon}{3}$ so that $a', b' \in \bd \cap \disk{\gamma(s)}{r}$.
As $a - b = c(a'-b')$ for some scalar $c > 0$, we have
\[
\langle v_2 - v_1, a - b \rangle = c\langle v_2 - v_1, a' - b' \rangle \neq 0
\]
since $\gamma$ is TCGL at $\gamma(s)$ with radius $r$ and $v_2 - v_1 \in T_\gamma(s)$.
Thus $\phi$ is TCGL at $p$ with radius $r - \epsilon$.

The case where $p = \phi(t)$ is a vertex is similar but we must consider an arbitrary vector $w \in T_\phi(t)$ in the inner product.
We wish to show that, for every $w \in T_\phi(t)$, there is a $s'$ such that either $w$ or $-w \in T_\gamma(s')$ and $d(p, \gamma(s')) \leq \frac{\epsilon}{3}$, after which the proof follows as in the first case with $w$ (or $-w$) in place of $v_2-v_1$.
We let $\gamma(s) = \phi(t) = p$ and let $v_1 = \phi(t_1) = \gamma(s_1)$ and $v_2 = \phi(t_2) = \gamma(s_2)$ be the neighboring vertices (so $t_1 < t < t_2$ and $s_1 < s < s_2$).

As above, there exist $s_1', s_2'$ such that $s_1 \leq s_1' \leq s \leq s_2' \leq s_2$, $\gamma(s) - \gamma(s_1) \in \tc{\gamma}{s_1'}$ and $\gamma(s_2) - \gamma(s) \in \tc{\gamma}{s_2'}$.
Note that $\tc{\phi}{t}$ is exactly the set of positive linear combinations of these vectors.
By lemma \ref{lem:intvalue}, for every $w \in \tc{\phi}{t}$, there is a $s' \in [s_1', s_2']$ such that $w \in \tc{\gamma}{s'}$.
As $d(p, \gamma(s')) < \frac{\epsilon}{3}$, the proof is complete.
\end{proof}
\end{lem}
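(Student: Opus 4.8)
The plan is to manufacture the polygon by a sufficiently fine sampling of $\gamma$ and then to certify that it is \tcgl{} by transporting the \tcgl{} inequality of $\bd$ along the short $\gamma$-arcs that the polygon edges shadow, spending the gap between radius $r$ and radius $r-\epsilon$ to absorb the errors introduced by the approximation. First I would pick points $v_0,\dots,v_{m-1}$ in cyclic order along $\bd$ so that the arc length of $\gamma$ between consecutive ones is at most $\epsilon/3$, and let the polygon be the closed curve through these points in this order, with an arc-length parametrization $\phi$ meeting the $v_i$ in the same order as $\gamma$. The Hausdorff-type bound is then immediate: a point $p\in\bd$ between neighbours $v_1,v_2$ satisfies $d(p,v_1)+d(p,v_2)\le$ (arc length) $\le\epsilon/3$, whence $\min\{d(p,v_1),d(p,v_2)\}\le\epsilon/6$ and $p$ is within $\epsilon/6$ of the polygon. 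I would also record two companion facts that drive the rest of the argument: each polygon edge has length at most $\epsilon/3$ (Euclidean distance is dominated by arc length), and each edge together with the $\gamma$-arc it shadows is a closed curve of length at most $2\epsilon/3$, so any point of that edge lies within $\epsilon/3$ of every point of that arc (in particular within $\epsilon/3$ of $\bd$).

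\textbf{Certifying \tcgl{}-ness of the polygon.}
To check the condition at a point $p=\phi(t)$ for radius $r-\epsilon$, I must show $\langle w,a-b\rangle\neq0$ for every $w$ in the tangent cone $\tc{\phi}{t}$ and every pair of distinct points $a,b$ on the polygon inside $\disk{p}{r-\epsilon}$. The mechanism is the same in both cases of the dichotomy ``$p$ in the interior of an edge'' versus ``$p$ a vertex'': given a direction $w\in\tc{\phi}{t}$, I use the tangent-cone mean value theorem (Lemma \ref{lem:meanvalue}, applicable because a \tcgl{} boundary has no cusps) --- in the vertex case combined with the tangent-cone intermediate value theorem (Lemma \ref{lem:intvalue}) applied to the convex cone on the two incident edge directions --- to produce a parameter $s$ with $w\in\tc{\gamma}{s}$ (up to sign, which does not affect the inner product) and with $\gamma(s)$ within $\epsilon/3$ of $p$. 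Then $\disk{p}{r-\epsilon}\subset\disk{\gamma(s)}{r-2\epsilon/3}\subsetneq\disk{\gamma(s)}{r}$. Finally I promote the polygon secant through $a$ and $b$ to a secant of $\bd$: the line $\ell=\overline{ab}$ either coincides with the line carrying one of the edges on which $a,b$ sit --- in which case the two vertices of that edge are distinct points of $\bd\cap\disk{\gamma(s)}{r}$ in the direction $a-b$ --- or else it crosses transversally the Jordan curve formed by $a$'s edge and the arc it shadows (and likewise for $b$), producing distinct $a',b'\in\bd\cap\disk{\gamma(s)}{r}$ collinear with $a,b$, so that $a-b=c(a'-b')$ for some $c>0$. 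In either case the \tcgl{}-ness of $\bd$ at $\gamma(s)$ gives $\langle w,a-b\rangle=c\langle w,a'-b'\rangle\neq0$, which is what was needed.

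\textbf{Where the difficulty lies.}
The conceptual ingredients --- the sampling, the two tangent-cone ``calculus'' lemmas, and the disk-containment chain --- are essentially routine; the real work is the geometric bookkeeping in the last step. One must verify that the secant of the polygon can always be upgraded, in a collinearity-preserving way, to a secant of $\bd$ through two \emph{distinct} points both lying inside $\disk{\gamma(s)}{r}$, and this forces a careful treatment of the degenerate sub-cases: when $\ell$ runs along an edge, when the two edges carrying $a$ and $b$ are adjacent and share a vertex, and when $a,b$ lie on a common edge (where the claim is in fact trivial). It also requires checking that the $\epsilon$-budget genuinely covers every displacement incurred --- the $\le\epsilon/3$ move of the disk centre from $p$ to $\gamma(s)$ and the $\le\epsilon/3$ move from $a$ to $a'$ --- so that the chain $\disk{p}{r-\epsilon}\subset\disk{\gamma(s)}{r-2\epsilon/3}\subset\disk{\gamma(s)}{r-\epsilon/3}\subset\disk{\gamma(s)}{r}$ really does place $a',b'$ where the hypothesis on $\bd$ can be invoked.
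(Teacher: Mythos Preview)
Your proposal is correct and follows essentially the same route as the paper: the same $\epsilon/3$-sampling construction, the same use of Lemmas~\ref{lem:meanvalue} and~\ref{lem:intvalue} to find a nearby $\gamma(s)$ realizing the polygon direction, the same disk-containment chain, and the same ``promote the polygon secant to a $\bd$-secant'' step. If anything, you are more scrupulous than the paper about the last step---the paper simply asserts the existence of collinear $a',b'\in\bd$ with $d(a,a'),d(b,b')\le\epsilon/3$, whereas you correctly flag the Jordan-curve argument and its degenerate sub-cases as the place where the real work is.
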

\begin{dfn}
We say that $\shape$ is tangentially graph-like and Lipschitz (TGLL) with radius $r$ if $\shape$ is \tgl{} with radius $r$ and there is some constant $0 < K < \infty$ such that for every $p \in \bd$, the arc $\disk{p}{r} \cap \bd$ is the graph of a Lipschitz function (in the same orientation used by the \tgl{} definition) and that the Lipschitz constant is at most $K$.
\end{dfn}

\begin{rem}
Note that \tgl{} does not imply tangentially graph-like and Lipschitz: taking
$\gamma$ to be a square with side length $5$ whose corners are
replaced by quarter circles of radius $1$ and then considering disks
of radius $\sqrt{2}$ centered on $\gamma$ yields one example.
\end{rem}

Because $\gamma$ is arclength parameterized by $s$, $||\dgm{s}|| = 1$
for all $s$.  Since $\gamma$ is assumed $C^1$ on its compact domain
$[0,L]$, $\dgmm$ is uniformly continuous: for any $\epsilon > 0$,
there is a $\delep$ such that if $|s_2-s_1| < \delep$ then
$||\dgm{s_2} - \dgm{s_1}||< \epsilon$.

We will use the fact that $\gamma$ always crosses $\partial D$ transversely to prove that $\gamma$ is in fact TGLL on slightly
bigger disks of radius $r+\delta$ as long as one takes a somewhat
bigger Lipschitz constant $\hat{K}$.
It is then an immediate result of lemma \ref{lem:tcgl-poly} that we can find an approximating polygon that is TCGL with radius $r$.

\begin{figure}[htp!]
  \centering
  \input{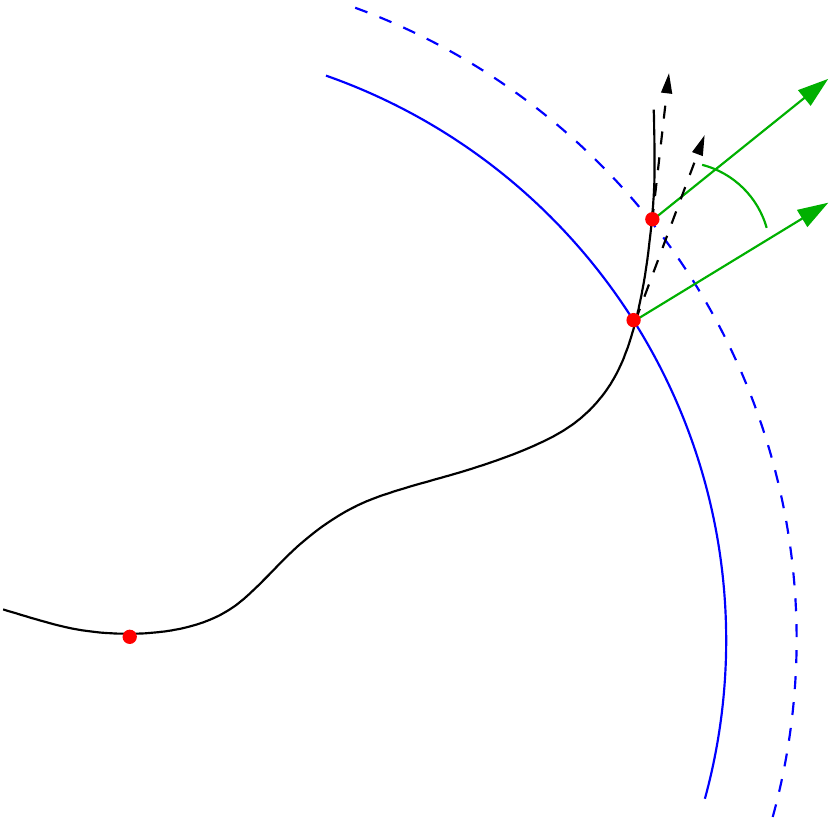_t}
  \caption{TGLL implies TCGL: Step one}
 \label{fig:step1}
\end{figure}

\begin{lem}
  If $\gamma$ is TGLL with radius $r$, then it is TGLL with radius $r+\delta$ for some $\delta > 0$ and there is an 
  approximating polygon $P_{\gamma}$ which is TCGL with radius $r$.
\end{lem}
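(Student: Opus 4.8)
The second assertion is immediate from the first: once $\gamma$ is shown to be TGLL (hence TGL, hence --- by the remark following Definition~\ref{def:tcgl} --- TCGL) with radius $r+\delta$, Lemma~\ref{lem:tcgl-poly} applied with $\epsilon=\delta$ produces a polygon approximating $\bd$ that is TCGL with radius $(r+\delta)-\delta=r$. So the plan is to prove the first assertion: TGLL with radius $r$ forces TGLL with radius $r+\delta$ for some $\delta>0$ and some enlarged uniform Lipschitz constant $\hat{K}$.

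First I would extract a \emph{uniform} transversality constant. Since $\gamma$ is TGL (hence TCGL) with radius $r$, Theorem~\ref{thm:two-intersections} gives that $\bd$ crosses every circle $\cir{\gamma(s_0)}{r}$ transversely. The set $A=\{(s_0,s)\in[0,L]^2 : |\gamma(s)-\gamma(s_0)|=r\}$ is compact and the continuous function $(s_0,s)\mapsto\bigl|\bigl\langle \widehat{\gamma(s)-\gamma(s_0)},\,\dgm{s}\bigr\rangle\bigr|$ is strictly positive on it (by transversality), hence bounded below by some $\eta>0$. Now fix $p=\gamma(s_0)$ and coordinates with $p$ at the origin and $\dgm{s_0}$ horizontal, so $\bd\cap\disk{p}{r}$ is the graph of a $K$-Lipschitz $f$ with $f(0)=0$ and endpoints $\gamma(s^\pm)\in\cir{p}{r}$. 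At $\gamma(s^+)$ the tangent $\dgm{s^+}$ is tangent to this graph --- angle at most $\arctan K$ from the horizontal, positive first coordinate --- and $\gamma$ leaves $\disk{p}{r}$ there, so $\langle\gamma(s^+)-p,\dgm{s^+}\rangle=r\langle\widehat{\gamma(s^+)-p},\dgm{s^+}\rangle\ge r\eta>0$. By the uniform continuity of $\dgmm$ on $[0,L]$ established above there is a $\delta_1>0$ \emph{independent of $s_0$} so that, on $[s^+,s^++\delta_1]$, the tangent stays within a fixed angle $<\pi/2$ of the horizontal with positive first coordinate (so this sub-arc is the graph of a $\hat{K}_1$-Lipschitz function of $x$ with strictly increasing $x$-coordinate, attaching to $f$ at $x=x^+$) and $\langle\gamma(s)-p,\dgm{s}\rangle\ge r\eta/2>0$; consequently $|\gamma(s)-p|$ increases strictly from $r$ at $s=s^+$ to at least some $r+c_1$ with $c_1>0$ uniform, and the mirror statement holds on $[s^--\delta_1,s^-]$.

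Next I would rule out interference from distant parts of $\bd$: I claim there is a uniform $\delta\in(0,c_1)$ with $\bd\cap\disk{p}{r+\delta}\subseteq\gamma\bigl([s^--\delta_1,\,s^++\delta_1]\bigr)$ for every $p$ (after the usual reparameterization putting $\gamma(0)$ outside $\disk{p}{r}$, so the complementary arc carries no wrap-around). The endpoint parameters $s^\pm(s_0)$ vary continuously with $s_0$ (transversality plus the implicit function theorem, together with the fact that the two intersection points stay distinct by Theorem~\ref{thm:two-intersections}), so a failure of the claim would yield counterexamples $(s_0^{(n)},s^{(n)})$ with $|\gamma(s^{(n)})-\gamma(s_0^{(n)})|\to r$ and $s^{(n)}$ outside $[s^-(s_0^{(n)})-\delta_1,\,s^+(s_0^{(n)})+\delta_1]$; passing to a convergent subsequence gives a limit $(\bar s_0,\bar s)$ with $|\gamma(\bar s)-\gamma(\bar s_0)|\le r$ --- forcing $\bar s\in[s^-(\bar s_0),s^+(\bar s_0)]$ --- yet with $\bar s\le s^-(\bar s_0)-\delta_1$ or $\bar s\ge s^+(\bar s_0)+\delta_1$, a contradiction. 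With this $\delta$, the strict monotonicity from the previous step shows $\bd\cap\disk{p}{r+\delta}$ is exactly the single arc $\gamma([s^-_\delta,s^+_\delta])$ extending $\bd\cap\disk{p}{r}$ a little past each endpoint; gluing the $K$-Lipschitz graph of $f$ to the two $\hat{K}_1$-Lipschitz graphs on the extensions (all over the same horizontal axis, with matching values where they meet) presents $\bd\cap\disk{p}{r+\delta}$ as the graph of a single $\hat{K}:=\max\{K,\hat{K}_1\}$-Lipschitz function vanishing with zero derivative at $p$. Since $\delta$, $\hat{K}_1$, $\eta$ are uniform in $p$, this is precisely TGLL with radius $r+\delta$ and constant $\hat{K}$, and the polygon follows as in the first paragraph.

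The step I expect to be the main obstacle is ruling out the distant interference: TGLL at radius $r$ constrains $\bd$ only inside $\disk{p}{r}$, so a priori some portion of $\bd$ far away in arc length could wander within $r+\delta$ of $p$ and spoil the graph property, and excluding this \emph{uniformly in $p$} is exactly what forces the compactness argument --- including the care needed for the continuity of $s^\pm(s_0)$ and for the $\gamma(0)=\gamma(L)$ wrap-around. By contrast, the uniform transversality bound and the short graph-like extension past each exit point are routine consequences of compactness and the uniform continuity of $\dgmm$.
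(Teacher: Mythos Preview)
Your proposal is correct and follows essentially the same strategy as the paper's proof: obtain a uniform transversality bound at the circle crossings, use the uniform continuity of $\dgmm$ to push the graph a fixed amount past the exit points while keeping the distance to the center monotone, exclude interference from far-away pieces of $\bd$ by a compactness argument, and then invoke Lemma~\ref{lem:tcgl-poly}. The only noteworthy differences are in packaging: the paper extracts transversality by showing the angles $\nu_1,\nu_2$ are continuous in $s$ (via the implicit function theorem) and then bounding them away from $\pi/2$ on the compact $[0,L]$, whereas you go straight to a positive lower bound for $|\langle\widehat{\gamma(s)-\gamma(s_0)},\dgm{s}\rangle|$ on the compact set $A$; and for the ``no distant interference'' step the paper uses the explicit continuous function $d(t)=\operatorname{dist}\bigl(D(\gamma(t),r),\,\gamma\setminus\gamma([s_*-\delta_s,s^*+\delta_s])\bigr)>0$ and takes its minimum, while you run the equivalent sequential-compactness contradiction (which, as you correctly note, requires the continuity of $s^\pm(s_0)$ and care with the wrap-around).
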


\begin{proof}
\noindent{\bf Step 1: Show that the quantities $\nu_1$ and $\nu_2$
are continuous as a function of $s \in [0,L]$.(see Fig.~\ref{fig:step0})}

 Define $R^2(s,t)
\equiv ||\gm{s} - \gm{t}||^2$.  Taking the derivative, we get
\[DR = \left[ \left\langle \frac{\gm{s} - \gm{t}}{R(s,t)} ,
\dgm{s}\right\rangle , \left\langle \frac{\gm{t}-\gm{s}}{R(s,t)},
\dgm{t}\right\rangle \right]. \] Because $\nu_1$ and $\nu_2$ are both less than
$\pi/2$ and $\gamma$ is graph-like in the disk, we have that both
elements of this derivative are nowhere zero. By the implicit function
theorem, we get that $s^{-}(s)$ and $s^{+}(s)$ are continuous
functions of $s$. From this it follows that $\nu_1$ and $\nu_2$ are
continuous on $[0,L]$.

\noindent{\bf Step 2:} From the previous step and the compactness of $[0,L]$ we
get that $\nu_1(s)$ and $\nu_2(s)$ are both bounded by $M_\nu <
\pi/2$.
We define $\enu \equiv \pi/2 - M_{\nu} > 0$.
Fix a $t\in[0,L]$. Define $\rhh(s)$ by $\rhh^2(s) = R^2(s,t) =
||\gm{s} - \gm{t}||^2$. Then $\dot{\rhh}(s) = \langle \frac{\gm{s}
  -\gm{t}}{\rhh}, \dgm{s}\rangle = \langle n_t(s), \dgm{s}\rangle $
where $n_t(s)= \frac{\gm{s}-\gm{t}}{||\gm{s}-\gm{t} ||} =
\frac{\gm{s}-\gm{t}}{\rhh}$, the external normal to $\partial
D(\gamma(t),\rhh)$ at $\gm{s}$ (see Figure \ref{fig:step1}).  On any interval in $s$ where
$\dot{\rhh}(s) > 0$ we have that $\rhh(s)$ is one to one and strictly increasing.
Define $s^{*} \equiv s^{+}(t)$ and $s_{*} \equiv s^{-}(t)$. We showed
above that $\dot{\rhh}(s^*) = \langle n_t(s^*), \dgm{s^*}\rangle \geq
\cos(M_{\nu}) > 0$.

For $\langle n_t(s),\dgm{s}\rangle = 0$, $n_t(s)$ and $\dgmm$ will
have to have together turned by at least $\pi/2 - M_{\nu}$ radians.
And until they have turned this far, $\langle n_t(s),\dgm{s}\rangle >
0$. But $\dot{n}_t(s) \leq \frac{1}{\rho} \leq \frac{1}{r_{min}}$ for
some $r_{min} > 0$. (Choosing $r_{min} = \frac{r}{2}$ works.) And
$\dgmm$ is uniformly continuous on $[0,L]$. Therefore, there is a
$\delta_s$ such that on $[s^* , s^* + \delta_s]$, $n_t(s)$ and $\dgmm$
both turn by less than $\enu/3$.  Therefore, for $s \in [s^* , s^* +
\delta_s]$, we have that $\langle n_t(s), \dgm{s}\rangle > \cos(\pi/2
- \enu/3)$ and $\gamma([s^* , s^* + \delta_s))$ intersects $C =
\partial D(\gm{t},\rho)$ once for each $\rho \in [r,r+\delta_r]$, where
$\delta_r \equiv \delta_s\cos(\pi/2 - \enu/3)$.

A completely analogous argument works to show that $\gamma([s_*
-\delta_s, s_*])$ intersects $C = \partial D(\gm{t},\rho)$ once for
each $\rho \in [r,r+\delta_r]$. 

Define $d(t)$ to be the distance from $D(\gm{t},r)$ to
$\gamma\setminus\gamma([s_*-\delta_s,s^*+\delta_s])$. Since $\gamma$
is TGL, d(t) is greater than zero for all $t$ and is continuous in
$t$. Therefore, there is a smallest distance $\delta_d$ such that
$d(t) \geq \delta_d$ for all $t$. Define $\delta_{\gamma_o} =
\min(\delta_d/2,\delta_r/2)$.

Therefore, $\partial D(\gm{t},\rho)$ intersects $\gamma$ exactly
twice for $\rho \in [r,r+\delta_{\gamma_o}]$ for any $t\in[0,L]$. 

A similar argument shows that $\partial D(\gm{t},\rho)$ intersects
$\gamma$ exactly twice for $\rho \in [r-\delta_{\gamma_i},r]$ for any
$t\in[0,L]$. Defining $\delta_\gamma \equiv \min(\delta_{\gamma_i},
\delta_{\gamma_o})$ we get that $\partial D(\gm{t},\rho)$ intersects
$\gamma$ exactly twice for $\rho \in [
r-\delta_{\gamma},r+\delta_{\gamma} ]$, with the additional fact that
$\langle n_t(s), \dgm{s}\rangle > \cos(\pi/2
- \enu/3)$ at all those intersections.

\noindent{\bf Step 3:} TGLL implies that there is a constant $K <\infty$
such that $\gamma\cap\D(\gm{t},r)$ is the graph of a function whose
x-axis direction is parallel to $\dgm{t}$ and this function is
Lipschitz with Lipschitz constant $K$.

Since $\dgmm$ is uniformly continuous, there will be a $\delta_1$ such
that if $|u-v| < \delta_1$, then $\angle(\dgm{u}, \dgm{v}) <
\arctan{2K} - \arctan{K}$. Define $\delta_{K,s} = \min(\delta_s,
\delta_1)$. Define $\delta_{K,r} = \min(\delta_\gamma,
\delta_{K,s}\cos(\pi/2-\enu/3))$. Then
$\gamma\cap\D(\gm{t},r+\delta_{K,r})$ is the graph of a Lipschitz
function with Lipschitz constant at most $2K$ when $\dgm{t}$ is used
as the x-axis direction. That is, for all $t$, $\gamma$ is TGLL with
Lipschitz constant $2K$ for disks of radius $r+\delta_{K,r}$.
The result follows by lemma \ref{lem:tcgl-poly}.
\end{proof}

\section{Derivatives of $g(s,r)$}
\label{sec:derivatives}

\begin{figure}[htp!]
  \begin{center}
    \input{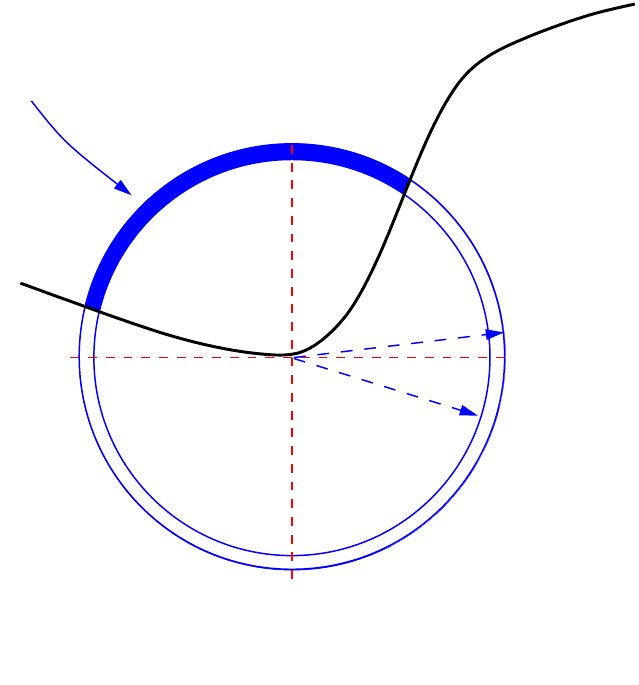_t}
  \end{center}
\caption{Deriving $\pd{g}{r}$ as the arclength of the circular segment.}
\label{fig:dgdr}
\end{figure}

\begin{lem}
\label{lem:dr}
Using the notation of figure \ref{fig:step0}, we have $\pd{}{r}g(s, r) = (\theta_2 - \theta_1)r$.
That is, the derivative exists and equals the length of the curve $\cir{\gamma(s)}{r} \cap \shape$.
\begin{proof}
We have (see figure~\ref{fig:dgdr})
\[
\pd{}{r}g(s,r) 
= \lim_{\Delta r \rightarrow 0}\frac{\area (\shape \cap \disk{\gamma(s)}{r+\Delta r})  - \area (\shape \cap \disk{\gamma(s)}{r})}{\Delta r}.
\]

This difference of areas can be modeled by the difference in the circular sectors of $\disk{\gamma(s)}{r+\Delta r}$ and $\disk{\gamma(s)}{r}$ with angle $\theta_1 - \theta_2$.
The actual area depends on the image of $\gamma$ outside of $\disk{\gamma(s)}{r}$, but this correction will be a subset of the circular segment of $\disk{\gamma(s)}{r+\Delta r}$ which is tangent to $\disk{\gamma(s)}{r}$ at the point $\gamma$ exits.
This has area $O(\Delta r^2)$ by lemma \ref{lem:areabound}.

Thus we have
\[
\pd{}{r}g(s,r) = \lim_{\Delta r \rightarrow 0} \frac{(\theta_1 - \theta_2)r\Delta r + \frac{1}{2}(\theta_1 - \theta_2)\Delta r^2 + O(\Delta r^2)}{\Delta r} = (\theta_1 - \theta_2)r.\qedhere
\]
\end{proof}
\end{lem}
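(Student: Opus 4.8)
The plan is to evaluate $\pd{}{r}g(s,r)$ directly from the difference quotient, showing that the incremental area $\area\!\bigl(\shape\cap(\disk{\gamma(s)}{r+\Delta r}\setminus\disk{\gamma(s)}{r})\bigr)$ equals the area of a circular annular sector of angular width $\theta_2-\theta_1$ up to an error of order $\Delta r^2$. First I would fix $s$ with $\bd$ \tgl{} at $\gamma(s)$, so that the notation $\theta_1,\theta_2,\nu_1,\nu_2$ of Figure~\ref{fig:step0} is in force, and write
\[
\frac{g(s,r+\Delta r)-g(s,r)}{\Delta r}
=\frac{1}{\Delta r}\,\area\!\left(\shape\cap\bigl(\disk{\gamma(s)}{r+\Delta r}\setminus\disk{\gamma(s)}{r}\bigr)\right).
\]
By Theorem~\ref{thm:two-intersections} the boundary meets $\cir{\gamma(s)}{r}$ in exactly the two transverse points $\gamma(s^-)$ and $\gamma(s^+)$, and (with the interior ``up'') the arc of $\cir{\gamma(s)}{r}$ lying inside $\shape$ runs counterclockwise from angle $\theta_1$ to angle $\theta_2$. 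The ``model'' contribution to the incremental area is therefore the annular sector $\{(\rho,\theta): r\le\rho\le r+\Delta r,\ \theta_1\le\theta\le\theta_2\}$, whose area is $(\theta_2-\theta_1)r\,\Delta r+\tfrac12(\theta_2-\theta_1)\,\Delta r^2$.

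Next I would bound the discrepancy between the true region and this model sector. The two share the inner arc of $\cir{\gamma(s)}{r}$ from $\theta_1$ to $\theta_2$ and disagree only in two small regions, one near each crossing point $\gamma(s^\pm)$, where $\bd$ leaves $\disk{\gamma(s)}{r}$ and passes through the thin annulus. Since the crossings are transverse, i.e. $|\nu_1|,|\nu_2|<\tfrac\pi2$, a short computation with the (one-sided) tangent direction at $\gamma(s^\pm)$ shows that for $\Delta r$ small the sub-arc of $\bd$ crossing the annulus near $\gamma(s^\pm)$ has endpoints at Euclidean distance $O(\Delta r)$; by Lemma~\ref{lem:arcbound} this sub-arc has arc length $O(\Delta r)$ and by Lemma~\ref{lem:areabound} it, together with its chord, encloses a region of area $O(\Delta r^2)$. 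The remainder of the discrepancy near each crossing (the sliver bounded by that chord, the two circles, and the radial segment at angle $\theta_1$ or $\theta_2$) sits inside a set of diameter $O(\Delta r)$ and so also has area $O(\Delta r^2)$. Equivalently, as Figure~\ref{fig:dgdr} suggests, each correction is contained in the circular segment of $\disk{\gamma(s)}{r+\Delta r}$ cut off by the tangent line to $\cir{\gamma(s)}{r}$ at the exit point, with the arc-length bound keeping the relevant piece of $\bd$ inside a neighborhood of diameter $O(\Delta r)$ of that point.

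Putting the two pieces together gives $g(s,r+\Delta r)-g(s,r)=(\theta_2-\theta_1)r\,\Delta r+\tfrac12(\theta_2-\theta_1)\,\Delta r^2+O(\Delta r^2)$; dividing by $\Delta r$ and letting $\Delta r\downarrow0$ yields the right-hand derivative $(\theta_2-\theta_1)r$. Running the same argument for $\Delta r<0$ (the two crossings persist, transversely, for radii slightly below $r$, so the same decomposition applies) gives the matching left-hand derivative, whence $\pd{}{r}g(s,r)$ exists and equals $(\theta_2-\theta_1)r$, which is precisely the length of the arc $\cir{\gamma(s)}{r}\cap\shape$. The hard part is the error estimate of the preceding paragraph: one must argue carefully that the portion of $\bd$ traversing the annulus has diameter $O(\Delta r)$ — this is exactly where transversality ($|\nu_i|$ bounded away from $\pi/2$) and the piecewise $C^1$ regularity of $\bd$ near $\gamma(s^\pm)$ enter — after which Lemma~\ref{lem:areabound} does the rest; the sector geometry is routine.
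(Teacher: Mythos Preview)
Your proposal is correct and follows essentially the same approach as the paper: model the incremental area by the annular sector of angular width $\theta_2-\theta_1$ and show the correction near each crossing point has area $O(\Delta r^2)$ via Lemma~\ref{lem:areabound}. You are in fact somewhat more explicit than the paper about why the boundary arc traversing the annulus has diameter $O(\Delta r)$ (invoking transversality and Lemma~\ref{lem:arcbound}) and about the two-sided limit, but the skeleton of the argument is identical.
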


\begin{figure}[htp!]
  \begin{center}
    \input{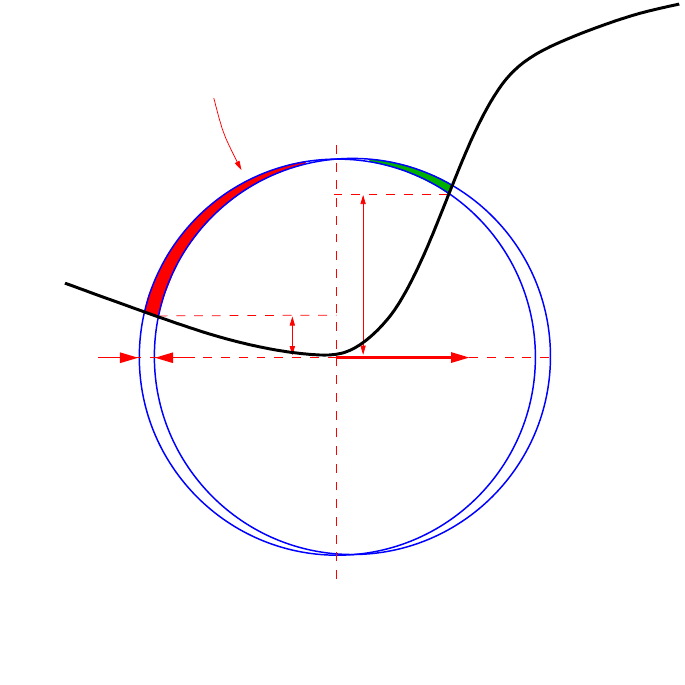_t}
  \end{center}
\caption{Deriving $\pd{g}{s}$ as the difference in heights of the entry and exit points}
\label{fig:dgds}
\end{figure}
\begin{lem}
Using the notation of figures \ref{fig:step0} and \ref{fig:dgds}, we have $\pd{}{s}g(s,r) = h_2 - h_1 = r\sin(\theta_2) - r\sin(\theta_1)$.
\label{lem:ds}
\begin{proof}
We have
\[
\pd{}{s} g(s,r) = \lim_{\Delta s \rightarrow 0} 
\frac{\area (\shape \cap \disk{\gamma(s+\Delta s)}{r})  - \area (\shape \cap \disk{\gamma(s)}{r})}{\Delta s}.
\]
The situation is illustrated in figure~\ref{fig:dgds} where we can see that the area being added as we go from $s$ to $s + \Delta s$ is the shaded region on the right with height $r - h_1$ and, considering first-order terms only, uniform width $\Delta s$ so has area $(r-h_1)\Delta s$.
Similarly, we are subtracting the area $(r-h_2)\Delta s$ on the left.
Therefore, we have
\[
\pd{}{s}g(s,r) = \lim_{\Delta s\rightarrow 0} \frac{(r-h_1)\Delta s - (r-h_2)\Delta s}{\Delta s} = h_2 - h_1.
\]
\end{proof}
\end{lem}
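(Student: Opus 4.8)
The plan is to compute the difference quotient for $\partial g/\partial s$ directly, working in the Cartesian frame furnished by the TGL condition at $\gamma(s)$: place $\gamma(s)$ at the origin, so that (arclength parameterization together with the horizontal tangent) $\gamma'(s) = (1,0)$ and the interior of $\shape$ lies locally above $\bd$. In this frame $\gamma(s^{+}) = (r\cos\theta_1, r\sin\theta_1)$ and $\gamma(s^{-}) = (r\cos\theta_2, r\sin\theta_2)$, so $h_1 = r\sin\theta_1$ and $h_2 = r\sin\theta_2$. By Theorem~\ref{thm:two-intersections}, $\bd\cap\disk{\gamma(s)}{r}$ is a single arc joining $\gamma(s^{-})$ to $\gamma(s^{+})$, and $\shape\cap\disk{\gamma(s)}{r}$ is the region bounded below by that arc and above by the circular arc $\{(r\cos\phi, r\sin\phi) : \theta_1 \le \phi \le \theta_2\}$, whose interior lies in the interior of $\shape$. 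This is the structural fact that lets us localize the area change near the two endpoints.

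Next I would expand $g(s+\Delta s, r) - g(s,r)$ for small $\Delta s$. Since $\gamma(s+\Delta s) = (\Delta s, 0) + o(\Delta s)$ and the map $p\mapsto\area(\shape\cap\disk{p}{r})$ changes by $O(|p-p'|)$ (the symmetric difference of two radius-$r$ disks with centers within $\epsilon$ has area $O(\epsilon)$), it suffices to compare $\shape\cap\disk{\gamma(s)}{r}$ with $\shape\cap\disk{(\Delta s,0)}{r}$ up to an $o(\Delta s)$ error. The symmetric difference of these two disks is a thin crescent on the right and one on the left; a point $(r\cos\phi, r\sin\phi)$ of the old circle lies at distance $r - \Delta s\cos\phi + O(\Delta s^2)$ from the new center, so near angle $\phi$ the crescent has signed radial width $\Delta s\cos\phi$. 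Intersecting with $\shape$: on the right the crescent abuts the upper circular arc precisely for $\phi\in[\theta_1,\pi/2]$ and there lies in the interior of $\shape$, contributing gained area $\Delta s\int_{\theta_1}^{\pi/2} r\cos\phi\,d\phi = (r - h_1)\Delta s$; symmetrically the lost area is $\Delta s\int_{\pi/2}^{\theta_2} r(-\cos\phi)\,d\phi = (r - h_2)\Delta s$. Hence $g(s+\Delta s, r) - g(s,r) = (h_2 - h_1)\Delta s + (\text{error})$, and dividing by $\Delta s$ and sending $\Delta s\to 0$ yields $\partial g/\partial s = h_2 - h_1 = r\sin\theta_2 - r\sin\theta_1$.

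The step I expect to require the most care is the error estimate. Near each endpoint $\gamma(s^{\pm})$ the crescent is neither cleanly inside nor outside $\shape$ because $\bd$ threads through it; but the discrepant piece is enclosed by an arc of $\bd$ whose endpoints are $O(\Delta s)$ apart (using that $s^{\pm}(\cdot)$ is continuous, indeed locally Lipschitz, by the implicit-function-theorem argument used earlier for $\nu_1,\nu_2$) together with two chords of length $O(\Delta s)$, so Lemma~\ref{lem:areabound} bounds it by $O(\Delta s^2)$; the $o(\Delta s)$ mismatch between $\gamma(s+\Delta s)$ and $(\Delta s,0)$ contributes $o(\Delta s)$ by the Lipschitz remark. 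Both are $o(\Delta s)$ and drop out in the limit. An equivalent but slicker route is the first variation of area: the part of $\partial(\shape\cap\disk{\gamma(s)}{r})$ lying on $\bd$ is stationary as we translate the disk, so
\[
\frac{\partial}{\partial s}g(s,r) = \int_{\cir{\gamma(s)}{r}\cap\shape}\langle\gamma'(s),n\rangle\,d\mathcal{H}^1 = \int_{\theta_1}^{\theta_2} r\cos\phi\,d\phi = r\sin\theta_2 - r\sin\theta_1,
\]
with $n$ the outward unit normal to the disk, the transversality and two-arc conclusions of Theorem~\ref{thm:two-intersections} being exactly what make this rigorous.
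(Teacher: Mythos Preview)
Your proposal is correct and follows essentially the same approach as the paper: both compute the difference quotient by identifying the gained and lost slivers of area $(r-h_1)\Delta s$ and $(r-h_2)\Delta s$ as the disk translates, though you supply considerably more detail (the explicit integration $\int r\cos\phi\,d\phi$ over the crescent, the replacement of $\gamma(s+\Delta s)$ by its first-order approximation, and the $O(\Delta s^2)$ error control via Lemma~\ref{lem:areabound}) where the paper simply appeals to the figure. Your closing first-variation formula is a clean alternative not in the paper but amounts to the same computation.
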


\section{Reconstructing shapes from T-like data}
\label{sec:reconstruct-T}

\begin{figure}[htp!]
  \centering
  \input{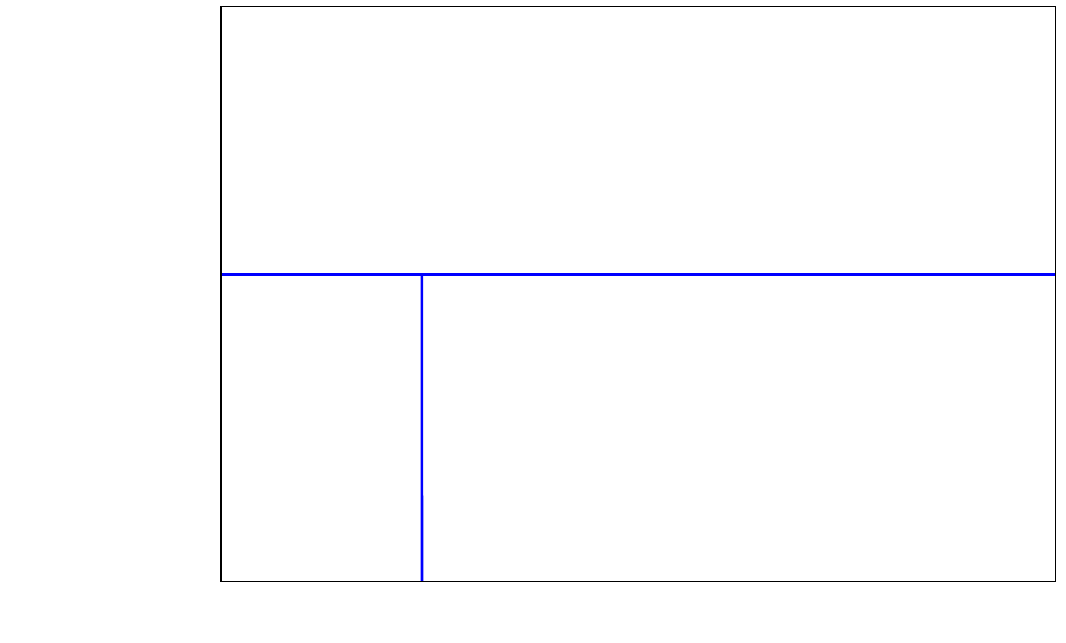_t}
  \caption{T-like data: we restrict the domain of $g(s,r)$ to a fixed radius
    $\hat{r}$ plus any vertical segment from $r=0$ to $r = \hat{r}$}
 \label{fig:t_like_data1}
\end{figure}

In this section, we consider the case where nonasymptotic densities and first derivatives are
known along a T-shaped set (i.e., for all $s$ with a fixed radius $\hat{r}$ and for all $r \leq \hat{r}$ with a fixed $\hat{s}$).
We show that this information is sufficient to guarantee reconstructability modulo reparametrizations, translations, and rotations.

\begin{lem}
  Assume that $\gamma$ is TGL for $\hat{r}$ (and thus all $r \leq \hat{r}$). Then if we know
  $g(s,r)$, $g_s(s,r) = \pd{g(s,r)}{s}$, and $g_r(s,r) =
  \pd{g(s,r)}{r}$ for $(s,r)\in([0,L]\times\{\hat{r}\}) \cup
  (\{\hat{s}\}\times (0,\hat{r}])$, we can reconstruct
  $\gamma(s)\in\Bbb{R}^2$ for all $s\in[0,L]$ modulo reparametrizations, translation, and rotations. (See
  figure~\ref{fig:t_like_data1}.)
\end{lem}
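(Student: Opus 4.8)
The plan is to use the first-order data in two complementary ways: the strip $r=\hat r$ will recover, for every $s$, the two angular positions at which $\cir{\gm{s}}{\hat r}$ crosses $\bd$ in the frame forced by the TGL condition, and the strip $s=\hat s$ will bootstrap an honest arc of $\gamma$ of arc length at least $2\hat r$ through $\gm{\hat s}$, after which the $r=\hat r$ data propagates that arc once around $\bd$. Throughout I identify $\mathbb R^2$ with $\mathbb C$, write $\psi(s)=\arg\dgm{s}$ for the tangent angle (defined and continuous since TGL forces $\gamma\in C^1$), and write $s^{+}(s;r),s^{-}(s;r)$ for the arc-length parameters of the forward and backward points of $\bd\cap\cir{\gm s}{r}$; by Theorem~\ref{thm:two-intersections} these are well defined, depend continuously on $(s,r)$ (transversality plus the implicit function theorem), and $\gamma$ meets $\disk{\gm s}{r}$ in the single arc $\gamma([s^{-}(s;r),s^{+}(s;r)])$. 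For \textbf{Step 1} I would invoke Lemmas~\ref{lem:dr} and~\ref{lem:ds}: $g_r(s,\hat r)=(\theta_2(s)-\theta_1(s))\hat r$ and $g_s(s,\hat r)=\hat r(\sin\theta_2(s)-\sin\theta_1(s))$, so the data give $a:=\theta_2-\theta_1$ and $b:=\sin\theta_2-\sin\theta_1$ at each $s$. Using $\sin\theta_2-\sin\theta_1=2\cos\!\big(\tfrac{\theta_1+\theta_2}{2}\big)\sin\!\big(\tfrac{\theta_2-\theta_1}{2}\big)$ together with the conventions $\theta_1\in(-\tfrac\pi2,\tfrac\pi2)$, $\theta_2\in(\tfrac\pi2,\tfrac{3\pi}2)$ (whence $\tfrac a2\in(0,\pi)$, so $\sin\tfrac a2>0$, and $\tfrac{\theta_1+\theta_2}{2}\in(0,\pi)$, on which $\cos$ is injective), the equation $\cos\big(\tfrac{\theta_1+\theta_2}{2}\big)=\tfrac{b}{2\sin(a/2)}$ has a unique solution, so $\theta_1(s)$ and $\theta_2(s)$ are each recovered; the identical computation on the strip $s=\hat s$ yields $\theta_1(\hat s;r),\theta_2(\hat s;r)$ for every $r\in(0,\hat r]$. (The values $g(s,r)$ themselves will not be needed.)

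For \textbf{Step 2} I would normalize, modulo translation and rotation, so that $\gm{\hat s}=0$ and $\dgm{\hat s}=(1,0)$. Then for each $r\in(0,\hat r]$ the forward crossing of $\cir{\gm{\hat s}}{r}$ sits at local polar angle $\theta_1(\hat s;r)$, i.e.\ $\gm{s^{+}(\hat s;r)}=r\,e^{i\theta_1(\hat s;r)}$. By the two-arc property every such point lies on $\gamma([s^{-}(\hat s;\hat r),s^{+}(\hat s;\hat r)])$, and $s^{+}(\hat s;r)-\hat s\in[r,\sqrt2\,r]$ (arc $\ge$ chord $=r$, and arc $\le\sqrt2\,r$ by Lemma~\ref{lem:arcbound}, applied at radii below $\hat r$ and passed to the limit), so $s^{+}(\hat s;r)\downarrow\hat s$ as $r\downarrow0$. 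Continuity of $s^{+}(\hat s;\cdot)$ then forces $\{r\,e^{i\theta_1(\hat s;r)}:r\in(0,\hat r]\}=\gamma\big((\hat s,\hat s+\ell_{+}]\big)$ with $\ell_{+}:=s^{+}(\hat s;\hat r)-\hat s\ge\hat r$; taking the closure and parametrizing this arc by arc length from $\gm{\hat s}=0$ with initial velocity $(1,0)$ recovers $\gm{}$ on $[\hat s,\hat s+\ell_{+}]$. The mirror argument with $\theta_2(\hat s;r)$ recovers $\gm{}$ on $[\hat s-\ell_{-},\hat s]$, $\ell_{-}\ge\hat r$. So after Step 2 we know $\gm{}$ (in arc-length parametrization, under the normalization) on an interval $I_0\ni\hat s$ with $|I_0|\ge2\hat r$.

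\textbf{Step 3} is the propagation. Suppose $\gm{}$ is known on $[a,b]$ with $b-a\ge\sqrt2\,\hat r$; then $\psi(s)=\arg\dgm s$ is known there too. For $s\in[a,b]$ I form $Q(s):=\gm s+\hat r\,e^{i(\psi(s)+\theta_1(s;\hat r))}$, which by the coordinate description of the forward crossing equals $\gm{s^{+}(s;\hat r)}$. Since $s^{+}(\cdot;\hat r)$ is continuous with $s^{+}(s;\hat r)-s\in[\hat r,\sqrt2\,\hat r]$, the set $\{Q(s):s\in[a,b]\}=\gamma\big(s^{+}([a,b];\hat r)\big)$ is exactly $\gamma([\underline s,\overline s])$ with $\underline s\in[a+\hat r,a+\sqrt2\,\hat r]\subseteq(a,b]$ and $\overline s\ge b+\hat r$; its endpoint $\gm{\underline s}$ already lies in $\gamma([a,b])$, so $\underline s$ is identified (injectivity of $\gamma$), $\overline s=\underline s+\mathrm{length}(\gamma([\underline s,\overline s]))$ is then known, and parametrizing $\gamma([\underline s,\overline s])$ by arc length from $\gm{\underline s}$ in the direction $\dgm{\underline s}$ extends our knowledge of $\gm{}$ to $[a,\overline s]\supseteq[a,b+\hat r]$. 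Iterating forward (wrapping around $\bd$, which is closed) finitely often covers all of $[0,L]$, and since we fixed only a translation and rotation this is the claimed reconstruction — in fact in arc-length parametrization, so a fortiori modulo reparametrization.

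The one genuinely delicate point is \textbf{Step 1}: that the two first derivatives at the single radius $\hat r$ pin down $\theta_1$ and $\theta_2$ \emph{separately}, not merely their difference; everything afterwards is transversality and continuity bookkeeping resting on Theorem~\ref{thm:two-intersections} and the arc-versus-chord bound of Lemma~\ref{lem:arcbound}. I also need to keep the known interval's length above the overlap threshold $\sqrt2\,\hat r$ throughout Step 3, which is automatic because it starts at $\ge2\hat r$ and grows by $\ge\hat r$ at each step; and the vertical strip is essential, since without it the relations $\gm{s^{\pm}(s;\hat r)}=\gm s+\hat r\,e^{i(\psi(s)+\theta_{1,2}(s;\hat r))}$ are ``jump-by-$\hat r$'' recursions with no initial condition — $\psi$ would be nowhere known.
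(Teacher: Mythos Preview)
Your proof is correct and follows essentially the same three-step strategy as the paper's: invert $(g_r,g_s)$ to recover $\theta_1,\theta_2$, use the vertical strip $s=\hat s$ to bootstrap the arc $\gamma\cap\disk{\gamma(\hat s)}{\hat r}$, and then slide the center along the known arc using the $r=\hat r$ data to propagate. Your treatment is considerably more explicit than the paper's---the trigonometric inversion in Step~1, the arc-versus-chord bounds and overlap bookkeeping in Steps~2--3---and your propagation uses both $\theta_1$ and $\theta_2$ together with the known tangent $\psi(s)$, whereas the paper locates the forward exit from the known backward exit using $g_r$ alone; but these are cosmetic variants of the same argument.
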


\noindent \emph{Proof:} As was shown in section~\ref{sec:derivatives},
$g_r$ gives us the length of the arc $\partial D(s,\hat{r})\cap\Omega$
and $g_s$ tells us precisely what position this arc is along $\partial
D(s,\hat{r})$ with respect to the direction $\dgm{s}$. The assumption
of TGL for $r=\hat{r}$ implies TGL for $0<r<\hat{r}$ (see
remark~\ref{rem:2arc}) and this implies that $\gamma$ has the 2 arc
property and transverse intersections with $\partial \D(s,r)$ for all
disks corresponding to $(s,r)\in([0,L]\times\{\hat{r}\}) \cup
(\{\hat{s}\}\times [0,\hat{r}])$. Since we care only about
reconstructing a curve $\gamma$ isometric to the original curve, we
choose $\gm{\hat{s}} = (0,0)\in\Bbb{R}^2$ and $\dgm{\hat{s}} = (1,0)$.
Taken together, $g_s(\hat{s},r)$ and $g_r(\hat{s},r)$ locate both
points in $\partial D(\hat{s},r) \cap \gamma$ for all $r\in
[0,\hat{r}]$. This yields $\gamma\cap\D(\gm{\hat{s}},\hat{r})$. Now,
simply increase $s$, sliding the center of a disk of radius $\hat{r}$
along $\gamma\cap\D(\gm{\hat{s}},\hat{r})$, using $g_r(s,\hat{r})$ to
find the element of $\gamma\cap\D(\gm{s},\hat{r})$ outside
$D(\gm{\hat{s}},\hat{r})$, using the fact that the other element of
$\gamma\cap\D(\gm{s},\hat{r})$ is inside $\D(\gm{\hat{s}},\hat{r})$
and known. This process can be continued until the entire curve is
traced out in $\Bbb{R}^2$.

\section{TCGL Polygon Is Reconstructible from $g_r$ and $g_s$ without
  tail}
\label{sec:polygon-no-tail}

\begin{thm}
\label{thm:tcgl-poly-reconstruct}
For a \tcgl{} polygon $\shape$, knowing $g(s,r)$, $g_r(s,r)$ and $g_s(s,r)$ for all $s \in [0,L)$ and a particular $r$ for which $\bd$ is \tcgl{} is sufficient to completely determine $\shape$ up to translation and rotation; that is, we can recover the side lengths and angles of $\shape$.
\begin{proof}
For a given $s$ and $r$ where $g_r$ and $g_s$ exist, we can use them to obtain $r(\theta_2-\theta_1)$ as the length of the circular arc between the entry and exit points by Lemma \ref{lem:dr} and $r(\sin\theta_2-\sin\theta_1)$ as the difference in heights of the entry and exit points by Lemma \ref{lem:ds}.

We wish to recover $\theta_1$ and $\theta_2$ from these quantities.
Note that if $(\theta_1, \theta_2) = (\phi_1, \phi_2)$ is one possible solution, then so is $(\theta_1, \theta_2) = (2\pi - \phi_2, 2\pi - \phi_1)$ so solutions always come in pairs.

We can imagine placing a circular arc with angle $\frac{g_r}{r}$ on our circle and sliding it around until the endpoints have the appropriate height difference, yielding our $\theta_1$ and $\theta_2$.
Note that since $\shape$ is \tcgl{}, one endpoint must be on the left side of the circle and the other must be on the right and we cannot slide either endpoint to or beyond the vertical line through the center of the circle.

Therefore, as we slide the right endpoint down, the left endpoint slides up so that the height difference as a function of the slide is strictly monotonic.
Therefore, the slide that gives us $\theta_1$ and $\theta_2$ is unique for a given starting arc placement.
However, there are two starting arc placements: the first calls the angle for the right endpoint $\theta_1$ and the left endpoint $\theta_2$ (so the interior of $\shape$ is ``up'' in the circle) and the second swaps these (so the interior of $\shape$ is ``down'').
Since we have adopted the convention that $\bd$ is traversed in a counterclockwise direction (so the interior of $\shape$ is up in the circles) we therefore pick the first option; this gives us a unique solution for $\theta_1$ and $\theta_2$.

This procedure works whenever $g_r$ and $g_s$ exist which is certainly true whenever the density disk does not touch a vertex of $\shape$ either at its center or on its boundary because if we avoid these cases, then there is only one graph-like orientation to deal with and $\bd$ is $C^\infty$ for all the points that enter into the computation.
In fact, with a moment's thought, we can make a stronger statement than this: $g_r$ always exists and $g_s$ exists as long as the center of the density disk is not a vertex of the polygon.

We can identify the $s$ values at which $g_s(s,r)$ does not exist to obtain the arc length positions of the vertices (and therefore obtain side lengths).
For a given $s$ corresponding to a vertex, we can find $g_r$ and the one-sided derivatives $g_{s-}$ and $g_{s+}$.
These correspond to the graph-like orientations required by the polygon sides adjacent to the current vertex.

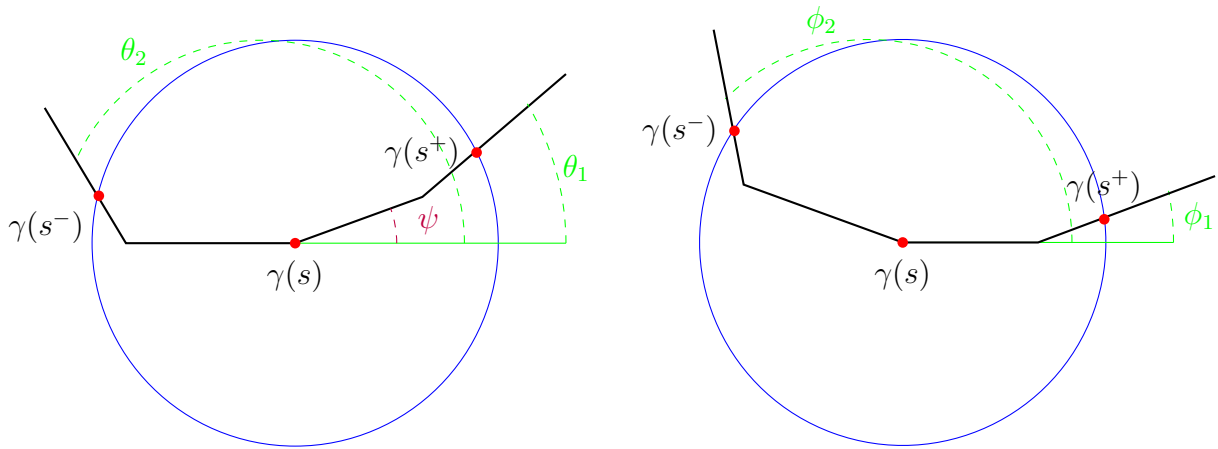
\begin{figure}
\makebox[\textwidth][c]{
\begin{tabular}{cc}
\begin{tikzpicture}[scale=0.9]
\draw[color=blue] (0,0) circle (3);

\draw[thick=2pt] (-3.7, 2) -- (-0:-2.5) -- (0,0) -- (20:2) -- (4,2.5);
\node[label=below left:{$\gamma(s^{-})$},style=circle,fill=red,minimum size=4pt,inner sep=0pt] (nenter) at (-2.9,.7) {};
\node[label=left:{$\gamma(s^{+})$},style=circle,fill=red,minimum size=4pt,inner sep=0pt] (nexit) at (2.68,1.34) {};
\node (nright) at (3,0) {};
\node (nleft) at (-3,0) {};

\draw[color=green] (0,0) -- (4,0);
\draw[dashed,color=green] (0:4) arc (0:31:4);
\node[color=green] () at (15:4.3) {$\theta_1$};
\draw[dashed,color=green] (0:2.5) arc (0:155:3);
\node[color=green] () at (130:3.7) {$\theta_2$};
\draw[dashed,color=purple] (0:1.5) arc (0:20:1.5);
\node[color=purple] () at (10:2) {$\psi$};
\node () at (270:3.23) {};
\node[label=below:{$\gamma(s)$},style=circle,fill=red,minimum size=4pt,inner sep=0pt] () at (0,0) {};

\end{tikzpicture}
&
\begin{tikzpicture}[scale=0.9,rotate=-20]
\draw[color=blue] (0,0) circle (3);
\draw[color=green] (200:0) -- (20:4);
\draw[dashed,color=green] (20:4) arc (20:31:4);
\node[color=green] () at (25:4.4) {$\phi_1$};
\draw[dashed,color=green] (20:2.5) arc (20:155:3);
\node[color=green] () at (130:3.5) {$\phi_2$};

\draw[thick=2pt] (-3.7, 2) -- (-0:-2.5) -- (0,0) -- (20:2) -- (4,2.5);
\node[label=left:{$\gamma(s^{-})$},style=circle,fill=red,minimum size=4pt,inner sep=0pt] (nenter) at (-2.9,.7) {};
\node[label=above:{$\gamma(s^{+})$},style=circle,fill=red,minimum size=4pt,inner sep=0pt] (nexit) at (2.68,1.34) {};
\node (nright) at (3,0) {};
\node (nleft) at (-3,0) {};
\node[label=below:{$\gamma(s)$},style=circle,fill=red,minimum size=4pt,inner sep=0pt] () at (0,0) {};

\end{tikzpicture}
\end{tabular}
}
\caption{Using $g_{s-}$ and $g_{s+}$ to obtain the polygon angle at $s$.}
\label{fig:tcgl-poly}
\end{figure}

Referring to Figure \ref{fig:tcgl-poly}, the one-sided derivatives along with the argument at the beginning of the proof yield the angles $\theta_1$, $\theta_2$, $\phi_1$, and $\phi_2$.
Thus we can calculate $\psi = \theta_1 - \phi_1$ which means that the polygon vertex at $s$ has angle $\pi - \psi$.

Doing this for all $s$ corresponding to vertices, we can determine all of the angles of the polygon.
With the side lengths identified earlier, this completely determines the polygon $\shape$ up to translation and rotation.
\end{proof}
\end{thm}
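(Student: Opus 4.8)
The plan is to reconstruct $\shape$ in three stages. First, at every boundary point $\gamma(s)$ that is not a vertex I will invert Lemmas \ref{lem:dr} and \ref{lem:ds} to recover the pair of circle-crossing angles $(\theta_1(s),\theta_2(s))$ of $\disk{\gamma(s)}{r}$ from the known values $g_r(s,r)$ and $g_s(s,r)$. Second, I will locate the vertices as precisely the arc-length parameters at which $g_s(\cdot,r)$ fails to be continuous; the gaps between consecutive such parameters are the side lengths (arc length equals Euclidean length along an edge). Third, at each vertex I will feed the two one-sided derivatives $g_{s-}$ and $g_{s+}$ (together with $g_r$) through the same inversion, obtaining the same geometric configuration described in two reference frames whose angular offset is the exterior angle, and thereby recover the interior angle. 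Side lengths plus interior angles determine $\shape$ up to a rigid motion.

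For the inversion, fix $s$ with $\gamma(s)$ not a vertex. By Lemma \ref{lem:dr}, $\theta_2-\theta_1 = g_r/r =: \Delta$ is known, and by Lemma \ref{lem:ds}, $\sin\theta_2-\sin\theta_1 = g_s/r$ is known. Since $\shape$ is \tcgl{}, Theorem \ref{thm:two-intersections} puts exactly one crossing in the open right half of the disk and one in the open left half, forcing $\theta_1\in(-\tfrac{\pi}{2},\tfrac{\pi}{2})$ and $\theta_2\in(\tfrac{\pi}{2},\tfrac{3\pi}{2})$. Writing $\theta_1=t$ and $\theta_2=t+\Delta$, the map $t\mapsto\sin(t+\Delta)-\sin t$ has derivative $\cos(t+\Delta)-\cos t$, which is strictly negative on the admissible window because $\cos t>0$ while $\cos(t+\Delta)<0$ there; hence the map is strictly monotone, so the admissible $t$ realizing the data is unique (existence is automatic since the true polygon realizes it). The orientation convention — $\gamma$ counterclockwise, interior ``up'' — is exactly what selects this solution over the reflected pair $(2\pi-\theta_2,\,2\pi-\theta_1)$. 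This recovers $(\theta_1(s),\theta_2(s))$, i.e. the two points of $\bd\cap\cir{\gamma(s)}{r}$ in the frame whose $x$-axis points along $\gamma'(s)$.

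For vertex detection, note that $g(\cdot,r)$ is continuous (it is an area) and that on each open edge $g_s = r\sin\theta_2-r\sin\theta_1$ depends smoothly on $s$, so $g_s$ is continuous along edges. At a vertex the incoming and outgoing unit tangents differ by the nonzero exterior angle, so the one-sided limits of $g_s$ — computed in the frames along $\gamma'(s^{-})$ and along $\gamma'(s^{+})$ — differ, and one checks (using that $\bd$ is \tcgl{}, so the crossing configuration cannot be invariant under a nontrivial rotation of the reference frame) that this jump is genuinely nonzero; thus the discontinuity set of $g_s$ is exactly the vertex set. Finally, at a vertex with parameter $\bar s$: the inversion applied to $(g_r(\bar s,r),\,g_{s-}(\bar s,r))$ yields angles $\theta_1,\theta_2$ for the two crossings of $\disk{\gamma(\bar s)}{r}$ in the incoming-tangent frame, and applied to $(g_r(\bar s,r),\,g_{s+}(\bar s,r))$ yields $\phi_1,\phi_2$ for the same two crossings in the outgoing-tangent frame; since these frames differ by the exterior angle, $\psi := \theta_1-\phi_1$ is that exterior angle (cf.\ Figure~\ref{fig:tcgl-poly}), so the interior angle at $\bar s$ is $\pi-\psi$. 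Repeating over all vertices gives all angles, which with the side lengths completes the reconstruction.

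The step I expect to be the main obstacle is the bookkeeping at vertices: one must ensure that ``$\theta_1$'' and ``$\phi_1$'' name the angular position of the \emph{same} physical crossing point — say, the one through which the outgoing edge leaves the disk — rather than merely ``whichever crossing currently lies on the right,'' because right and left can swap as the frame rotates across the vertex. The \tcgl{} hypothesis is precisely what makes this safe: by Theorem \ref{thm:two-intersections} both crossings stay strictly off the vertical line through $\gamma(\bar s)$ for every orientation in the tangent cone, so the monotone inversion applies uniformly for both one-sided frames and the crossing-to-crossing identification — hence the extraction of the exterior angle — is unambiguous.
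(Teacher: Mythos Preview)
Your proposal is correct and follows essentially the same approach as the paper: invert $g_r$ and $g_s$ via Lemmas~\ref{lem:dr} and~\ref{lem:ds} to recover $(\theta_1,\theta_2)$, locate vertices as the points where $g_s$ fails to exist, and at each vertex use the two one-sided $s$-derivatives in the incoming and outgoing tangent frames to read off the exterior angle as $\psi=\theta_1-\phi_1$. Your monotonicity argument (computing $\frac{d}{dt}[\sin(t+\Delta)-\sin t]=\cos(t+\Delta)-\cos t<0$ on the admissible window) is just an analytic rendering of the paper's geometric ``sliding arc'' picture, and your closing paragraph on matching $\theta_1$ with $\phi_1$ via the \tcgl{} hypothesis makes explicit a bookkeeping point the paper leaves implicit.
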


\section{Simple closed curves are generically reconstructible using fixed radius data}
\label{sec:generic}

We will assume that $\gamma$ is TGL for the radius $\hat{r}$.  We will also
assume that we know the first, second, and third derivatives of
$g(s,r)$ for $r=\hat{r}$.  Under these assumptions, $\gamma$ is
generically reconstructible. By generic we mean the \emph{admittedly weak}
condition of density -- reconstructible curves are $C^1$ dense in
the space of $C^2$ simple closed curves.

\begin{thm}
\label{thm:tcgl-reconstruct}
  Define $\Bbb{G} \equiv \{ \gamma| \gamma$ is a $C^2$ simple closed
  curve and TGL for $r=\hat{r}\}$.  Suppose that, for $r=\hat{r}$, for
  all $s\in[0,L]$, and for each $\gamma\in\Bbb{G}$ we know the first-,
  second-, and third-order partial derivatives of $g_{\gamma}(s,r)$.
  Then the set of reconstructible $\gamma\in\Bbb{G}$ is
  $C^1$ dense in $\Bbb{G}$ where reconstructability is modulo reparametrization, translation, and rotation.
\end{thm}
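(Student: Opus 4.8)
The plan is to reconstruct $\gamma$ in four stages, following the outline sketched after the statement. First I would recover the intersection data: by Lemmas~\ref{lem:dr} and~\ref{lem:ds}, the first derivatives $g_r(s,\hat r)$ and $g_s(s,\hat r)$ determine $\theta_2(s)-\theta_1(s)$ and $\sin\theta_2(s)-\sin\theta_1(s)$, and together with the TGL normalization ($\theta_1\in(-\tfrac{\pi}{2},\tfrac{\pi}{2})$, $\theta_2\in(\tfrac{\pi}{2},\tfrac{3\pi}{2})$, interior ``up'') this pins down $\theta_1(s)$ and $\theta_2(s)$ exactly as in the strict-monotonicity argument of Theorem~\ref{thm:tcgl-poly-reconstruct}. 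Geometrically, in the moving frame $(\gm{s};\dgm{s},n(s))$ we then know the positions of the two points $\gm{s^{-}(s)}$ and $\gm{s^{+}(s)}$ where $\cir{\gm{s}}{\hat r}$ meets $\bd$.

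Second, I would use the higher derivatives to recover the crossing angles and curvatures \emph{at those two points}. Differentiating the stage-one relations in $s$ and substituting the second- and third-order derivatives of $g$, one obtains a nonsingular algebraic system (nonsingular because all crossings are transverse by Theorem~\ref{thm:two-intersections}) that yields the crossing angles $\nu_1(s),\nu_2(s)$ and the curvatures $\kappa^{+}(s):=\kappa(s^{+}(s))$, $\kappa^{-}(s):=\kappa(s^{-}(s))$ of $\bd$ at the exit and entry points. The crucial and awkward feature is that this is the curvature at $\gm{s^{\pm}(s)}$ rather than at the center $\gm{s}$, and it comes parametrized by the wrong clock: we learn $\kappa(s^{+}(s))$ as a function of $s$ while knowing only that $\|\gm{s^{+}(s)}-\gm{s}\|=\hat r$, so we cannot yet read $\kappa$ off as a function of arc length.

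Third, I would reduce everything to finding one reparametrization. Writing $f=s^{+}$, the counterclockwise convention forces $\gm{s}$ to be the \emph{left} intersection point seen from the center $\gm{s^{+}(s)}$, so one checks $s^{-}\circ s^{+}=\mathrm{id}$, i.e.\ $s^{-}=f^{-1}$; hence $\kappa^{+}=\kappa\circ f$ and $\kappa^{-}=\kappa\circ f^{-1}$. Once $f$ is known we recover $\kappa$ as a function of arc length and then $\bd$ up to a rigid motion by the fundamental theorem of planar curves (cf.\ Appendix~\ref{sec:appen}). So the problem becomes: determine the single increasing circle map $f$ from two reparametrizations $\kappa^{+},\kappa^{-}$ of a common unknown profile $\kappa$, the relation $\kappa^{+}=\kappa^{-}\circ f\circ f$, and the chord-length constraint $\|\gm{f(s)}-\gm{s}\|=\hat r$ (which, via stages one and two, can be written entirely in terms of the known data and $f$).

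Fourth, and this is the heart of the argument, I would pin down $f$ by matching features, after a generic perturbation. A strict local maximum of $\kappa$ at arc length $u_0$ appears as a strict local maximum of $\kappa^{+}$ at the $s$ with $f(s)=u_0$ and as one of $\kappa^{-}$ at the $s$ with $f(s)=f(u_0)$; matching the $j$-th largest critical value of $\kappa^{+}$ with that of $\kappa^{-}$ therefore anchors the graph of $f$ at finitely many points, and the chord constraint together with $s^{-}=f^{-1}$ propagates these anchors across $[0,L]$, determining $f$. This matching is legitimate only when $\kappa$ has finitely many critical points, all nondegenerate, with pairwise distinct critical values, which may fail for a general $\gamma\in\Bbb{G}$. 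To obtain the density statement I would show that any $\gamma\in\Bbb{G}$ admits arbitrarily $C^1$-small (in fact $C^2$-small) perturbations that remain in $\Bbb{G}$ (TGL for $\hat r$ is preserved under sufficiently small $C^2$ perturbations) and whose curvature has only finitely many nondegenerate critical points with distinct values; for such curves the procedure above reconstructs the curve, which establishes $C^1$ density. I expect the genuine obstacle to be precisely this last stage: verifying that feature-matching plus chord-length propagation determines $f$ \emph{uniquely}, with no residual ambiguity (e.g.\ from near-symmetric arcs, or from confusing $f$ with $f^{-1}$), and making the perturbation/transversality argument rigorous while certifying that the perturbed curve still lies in $\Bbb{G}$.
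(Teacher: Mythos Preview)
Your first three stages line up with the paper almost exactly: recovery of $\theta_1,\theta_2$ from $g_r,g_s$, recovery of $\nu_1,\nu_2$ and then of $\kappa(s^{\pm}(s))$ from the second- and third-order derivatives, and the reduction to finding the single reparametrization $f=s^{+}$. The paper organizes this the same way (their Claims~\ref{clm:2ndderiv} and~\ref{clm:3rdderiv}), and your identification of $s^{-}=f^{-1}$ is correct.

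The substantive divergence is in your fourth stage. The paper does \emph{not} match curvature profiles. Instead it observes the turning-angle identity
\[
\theta_1(s)+\nu_1(s)\;=\;\pi-\theta_2\bigl(s^{+}(s)\bigr)-\nu_2\bigl(s^{+}(s)\bigr),
\]
so that, setting $A(s)=\theta_1(s)+\nu_1(s)$ and $B(s)=\pi-\theta_2(s)-\nu_2(s)$, one has $A=B\circ f$ with \emph{both $A$ and $B$ already known} from stage two. A unique global maximum of these turning-angle curves then pins down one value of $f$ directly, and monotonicity between isolated critical points extends it. The genericity perturbation in the paper is accordingly aimed at the turning-angle curves (isolated extrema, unique global max), not at $\kappa$.

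Your scheme matches critical values of $\kappa^{+}=\kappa\circ f$ against those of $\kappa^{-}=\kappa\circ f^{-1}$. If $\kappa$ has a strict local maximum at $u_0$, then $\kappa^{+}$ has one at $a=f^{-1}(u_0)$ and $\kappa^{-}$ has one at $b=f(u_0)$; matching by value gives you the pair $(a,b)$ with $f(f(a))=b$. So curvature matching anchors $f\circ f$, not $f$, which is exactly the ``$f$ versus $f^{-1}$'' ambiguity you flagged but did not resolve. The ``chord-length constraint'' you invoke is definitional (it is identically $\hat r$) and adds nothing as stated; what would actually help is its differential consequence $f'(s)=\cos\theta_1(s)/\cos\nu_1(s)$, which \emph{is} computable from your stage-two data and reduces the remaining ambiguity to a single shift --- but you do not say this, and even then one must argue that a single $f^{2}$ anchor fixes that shift uniquely. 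The paper's turning-angle route sidesteps this entire difficulty, since $A=B\circ f$ determines $f$ outright once the matching of distinguished points is justified.
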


\begin{figure}[htp!]
  \centering
  \input{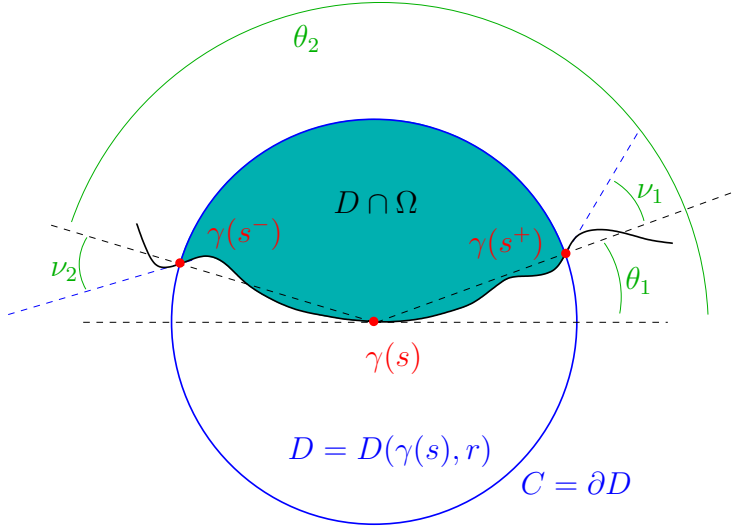}
  \caption{Figure~\ref{fig:step0} again as a reminder}
\label{fig:ref-fig}
\end{figure}

\prff In section \ref{sec:derivatives} we showed that $\frac{\partial
  g(s,r)}{\partial r} = r(\theta_2 - \theta_1)$ and $\frac{\partial
  g(s,r)}{\partial s} = r(\sin(\theta_2) - \sin(\theta_1))$, where the notation is as in Figure \ref{fig:ref-fig}.
Because $\gamma$ is TGL, we can solve for $\theta_1$ and $\theta_2$ from these two derivatives as in the proof of Theorem \ref{thm:tcgl-poly-reconstruct}.

\begin{clm}
\label{clm:2ndderiv}
The following equations hold:
  $\frac{\partial^2 g(s,r)}{\partial r^2} = \theta_2 - \theta_1 +
  r(\frac{\partial\theta_2}{\partial r} -
  \frac{\partial\theta_1}{\partial r})$ and $\frac{\partial^2
    g(s,r)}{\partial r\partial s} = \sin(\theta_2) - \sin(\theta_1) +
  r(\cos(\theta_2)\frac{\partial\theta_2}{\partial r} -
  \cos(\theta_1)\frac{\partial\theta_1}{\partial r} )$.
\end{clm}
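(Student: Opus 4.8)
The plan is to differentiate the first-order identities from Section \ref{sec:derivatives} directly, treating the $\theta_i$ as functions of $(s,r)$ via the implicit description of the entry/exit points, and to verify that the differentiation is legitimate (i.e. that $\theta_1,\theta_2$ are genuinely $C^1$ in $(s,r)$ near the point in question). Recall Lemma \ref{lem:dr} gives $g_r(s,r) = r(\theta_2 - \theta_1)$ and Lemma \ref{lem:ds} gives $g_s(s,r) = r(\sin\theta_2 - \sin\theta_1)$; since $\gamma$ is TGL for $\hat r$ it is TGL for all $0 < r \le \hat r$ (Remark \ref{rem:2arc}), so these formulas hold on a full neighborhood of $(s,\hat r)$ in the $(s,r)$-plane, not merely on the slice $r = \hat r$. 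That is what makes it meaningful to differentiate $g_r$ again in $r$ and in $s$.

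First I would establish the regularity of $\theta_1,\theta_2$. The exit point $\gamma(s^+)$ is characterized by $\|\gamma(s^+) - \gamma(s)\|^2 = r^2$ together with $s^+$ lying on the correct arc; the function $R^2(\sigma; s) = \|\gamma(\sigma) - \gamma(s)\|^2$ has $\partial_\sigma R^2 = 2\langle \gamma(\sigma)-\gamma(s), \gamma'(\sigma)\rangle$, which is nonzero at $\sigma = s^+$ precisely because the crossing is transverse (Theorem \ref{thm:two-intersections}), with the angle of non-tangency bounded away from $\pi/2$ — this is exactly the $\nu_1,\nu_2 \in (-\pi/2,\pi/2)$ observation, and the same implicit-function-theorem argument already used in Step 1 of the proof of the TGLL$\Rightarrow$TCGL lemma. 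Since $\gamma \in C^2$ here, the implicit function theorem gives $s^+(s,r)$ and $s^-(s,r)$ as $C^1$ functions of $(s,r)$, hence $\theta_1,\theta_2$ are $C^1$ in $(s,r)$ (they are smooth functions of $\gamma(s^\pm) - \gamma(s)$, which stays on the circle of radius $r$ and away from the center). This justifies interchanging the order of differentiation and applying the product and chain rules.

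Then the computation is routine: differentiating $g_r = r(\theta_2-\theta_1)$ with respect to $r$ gives $g_{rr} = (\theta_2 - \theta_1) + r(\partial_r\theta_2 - \partial_r\theta_1)$, and differentiating the same expression with respect to $s$ gives $g_{rs} = r(\partial_s\theta_2 - \partial_s\theta_1)$; alternatively, differentiating $g_s = r(\sin\theta_2 - \sin\theta_1)$ with respect to $r$ gives $g_{sr} = (\sin\theta_2 - \sin\theta_1) + r(\cos\theta_2\,\partial_r\theta_2 - \cos\theta_1\,\partial_r\theta_1)$, which is the stated second identity, and equality of mixed partials $g_{rs} = g_{sr}$ is guaranteed once we know $g$ is $C^2$ (which follows from the $C^1$-ness of the $\theta_i$ just established, since $g_r,g_s$ are then $C^1$). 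I would present the claim's two displayed equations as exactly these two computations.

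The main obstacle is not the calculus but the regularity bookkeeping: one must be careful that the formulas of Lemmas \ref{lem:dr} and \ref{lem:ds} hold on a two-dimensional neighborhood (so that $\partial_r$ of $g_r$ and $\partial_s$ of $g_r$ both make sense), and that the TGL hypothesis at the single radius $\hat r$ really does propagate to nearby radii — both larger and smaller — so that $\theta_1(s,r),\theta_2(s,r)$ are defined and $C^1$ there. For $r < \hat r$ this is Remark \ref{rem:2arc}; for $r$ slightly larger than $\hat r$ one invokes transversality of the crossings (Theorem \ref{thm:two-intersections}) plus uniform continuity of $\gamma'$, exactly as in Step 2 of the TGLL lemma, to see the two-arc property persists on $[\hat r, \hat r + \delta]$. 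Once that neighborhood is in hand, the rest is a one-line application of the product rule, and the claim follows.
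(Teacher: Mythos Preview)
Your proposal is correct and takes essentially the same approach as the paper: the paper's entire proof is the one-liner ``Simply differentiate the expressions we already have for $\frac{\partial g(s,r)}{\partial r}$ and $\frac{\partial g(s,r)}{\partial s}$,'' which is exactly the product-rule computation you spell out. Your added regularity bookkeeping (implicit function theorem for $s^\pm$, persistence of TGL to nearby radii) is more than the paper supplies, but it is consistent with arguments appearing elsewhere in the paper and does not change the approach.
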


\prfclm{clm:2ndderiv} Simply differentiate the expressions we already have for
$\frac{\partial g(s,r)}{\partial r}$ and $\frac{\partial
  g(s,r)}{\partial s}$. \epfclm 

We wish to express this in terms of $\nu_1$ and $\nu_2$.
Note that if we expand the circle radius by $\Delta r$, the right exit point $s_+(s)$ moves approximately (i.e., considering first-order terms only) a distance of $k \equiv \Delta r \sec(\nu_1)$ (so $\pd{k}{r} = \sec{\nu_1}$, a fact we will use later to compute curvature).
Therefore,
\[
\pd{\theta_1}{r} = \lim_{\Delta r \rightarrow 0} \frac{\arctan\left( \frac{r\sin\theta_1+k\sin(\theta_1+\nu_1)}{r\cos\theta_1 + k\cos(\theta_1 + \nu_1)}\right)-\theta_1}{\Delta r}.
\]
Straightforward techniques yield 
$
\pd{\theta_1}{r} %
= \frac{\tan{\nu_1}}{r}
$ and a similar calculation shows that $\pd{\theta_2}{r} = \frac{\tan\nu_2}{r}$.

Therefore, rewriting the second derivatives of $g(s,r)$ in terms of $\nu_1$ and $\nu_2$, we get:
\begin{eqnarray*}
  \frac{\partial^2 g(s,r)}{\partial r^2} & = & \theta_2 - \theta_1 +
  \tan(\nu_2) - \tan(\nu_1) \\
  \frac{\partial^2
    g(s,r)}{\partial r\partial s} & = & \sin(\theta_2) - \sin(\theta_1) +
  \cos(\theta_2)\tan(\nu_2) -
  \cos(\theta_1)\tan(\nu_1)
\end{eqnarray*}
Using these 2 derivatives, together with the previous two, we can
solve for $\nu_1 = \arctan(r \frac{\partial\theta_1}{\partial r})$ and
$\nu_2 = \arctan(r \frac{\partial\theta_1}{\partial r})$ whenever
$\cos(\theta_1) \neq \cos(\theta_2)$. Since we are assuming that the
curve is a simple closed curve, $\cos(\theta_1) \neq \cos(\theta_2)$ is
always true.

\begin{clm}
\label{clm:3rdderiv}
  Knowing $\frac{\partial^3 g(s,r)}{\partial r^3}$ and
  $\frac{\partial^3 g(s,r)}{\partial r^2\partial s}$ gives us
  $\kappa(s^+(s))$ and $\kappa(s^-(s))$, the curvatures of $\gamma$ at
  $s^+(s)$ and $s^-(s)$.
\end{clm}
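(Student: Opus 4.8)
The plan is to produce the two third derivatives by differentiating the formulas of Claim~\ref{clm:2ndderiv} once more in $r$, to read off from them a linear system for the unknowns $\frac{\partial\nu_1}{\partial r}$ and $\frac{\partial\nu_2}{\partial r}$, and then to convert these two quantities into the curvatures $\kappa(s^+(s))$ and $\kappa(s^-(s))$ by a short geometric identity.

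First I would differentiate
\[
\frac{\partial^2 g(s,r)}{\partial r^2}=\theta_2-\theta_1+\tan\nu_2-\tan\nu_1,\qquad
\frac{\partial^2 g(s,r)}{\partial r\,\partial s}=\sin\theta_2-\sin\theta_1+\cos\theta_2\tan\nu_2-\cos\theta_1\tan\nu_1
\]
with respect to $r$, substituting the already-known relation $\frac{\partial\theta_i}{\partial r}=\frac{\tan\nu_i}{r}$. Every resulting term is a known function of $\theta_1,\theta_2,\nu_1,\nu_2$ (all of which were recovered from the first and second derivatives of $g$) except the two unknowns $\frac{\partial\nu_1}{\partial r}$ and $\frac{\partial\nu_2}{\partial r}$, which enter linearly: $\frac{\partial\nu_1}{\partial r}$ appears with coefficient $-\sec^2\nu_1$ in $\frac{\partial^3 g}{\partial r^3}$ and $-\cos\theta_1\sec^2\nu_1$ in $\frac{\partial^3 g}{\partial r^2\partial s}$, and symmetrically $\frac{\partial\nu_2}{\partial r}$ with $\sec^2\nu_2$ and $\cos\theta_2\sec^2\nu_2$. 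The determinant of this $2\times 2$ system is $\sec^2\nu_1\,\sec^2\nu_2\,(\cos\theta_1-\cos\theta_2)$, which is nonzero: since $\theta_1\in(-\tfrac\pi2,\tfrac\pi2)$ and $\theta_2\in(\tfrac\pi2,\tfrac{3\pi}2)$ we always have $\cos\theta_1>0>\cos\theta_2$ for a simple closed curve, exactly the nondegeneracy already invoked after Claim~\ref{clm:2ndderiv}. Hence we solve uniquely for $\frac{\partial\nu_1}{\partial r}$ and $\frac{\partial\nu_2}{\partial r}$.

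The final step is the passage from $\frac{\partial\nu_i}{\partial r}$ to curvature. Fixing $s$ and regarding the right intersection as $\gamma(s^+(r))$, the arc-length parametrization together with the fact computed just before Claim~\ref{clm:3rdderiv} that $\frac{\partial k}{\partial r}=\sec\nu_1$ (where $k$ is the distance this point travels) gives $\frac{ds^+}{dr}=\sec\nu_1$, so the tangent angle $\phi_+$ at $s^+$ satisfies $\frac{d\phi_+}{dr}=\kappa(s^+)\sec\nu_1$. On the other hand $\gamma(s^+)-\gamma(s)$ is the outward normal to $\partial D$ at the exit point, with angle $\theta_1$, and by definition $\nu_1=\phi_+-\theta_1$. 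Combining and using $\frac{\partial\theta_1}{\partial r}=\frac{\tan\nu_1}{r}$ yields
\[
\frac{\partial\nu_1}{\partial r}=\kappa(s^+)\sec\nu_1-\frac{\tan\nu_1}{r},\qquad\text{hence}\qquad \kappa(s^+(s))=\cos\nu_1\Bigl(\tfrac{\partial\nu_1}{\partial r}+\tfrac{\tan\nu_1}{r}\Bigr),
\]
which is now a known quantity. The identical argument at the left intersection, with $\nu_2$ and $\theta_2$ in place of $\nu_1$ and $\theta_1$, gives $\kappa(s^-(s))$.

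I expect the main obstacle to be bookkeeping rather than anything conceptual: keeping the signs in $\nu_1=\phi_+-\theta_1$ (and its analogue at $s^-$) consistent with the counterclockwise, ``interior up'' conventions of Figure~\ref{fig:step0}, and confirming that $\frac{d\phi_+}{dr}=\kappa(s^+)\,\frac{ds^+}{dr}$ with the correctly signed curvature. The linear-algebra step is routine once the nondegeneracy $\cos\theta_1\neq\cos\theta_2$ is noted, and that is immediate from the angle ranges. A minor point to check along the way is that the exit point really moves (to first order) by arc length $\sec\nu_1\,\Delta r$, i.e.\ that the $C^2$ hypothesis lets us identify the Euclidean displacement rate with the arc-length rate $\frac{ds^+}{dr}$.
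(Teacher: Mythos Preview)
Your proposal is correct and follows essentially the same route as the paper: differentiate the second-derivative formulas of Claim~\ref{clm:2ndderiv} once more in $r$, solve the resulting $2\times 2$ linear system in $\partial\nu_1/\partial r$ and $\partial\nu_2/\partial r$ using the nondegeneracy $\cos\theta_1\neq\cos\theta_2$, and then recover curvature via the identity that the tangent angle at the exit point is $\theta_1+\nu_1$ together with $ds^+/dr=\sec\nu_1$. Your final expression $\kappa(s^+)=\cos\nu_1\bigl(\partial\nu_1/\partial r+\tfrac{\tan\nu_1}{r}\bigr)$ is in fact the correct one --- the paper's printed formula has $\sec\nu_1$ where it should divide by the arc-length rate rather than multiply --- so if anything your bookkeeping is cleaner.
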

\prfclm{clm:3rdderiv} Computing, we get
\begin{eqnarray*}
  \frac{\partial^3 g(s,r)}{\partial r^3} & = & \frac{\partial \theta_2}{\partial r} - \frac{\partial \theta_1}{\partial r} +
  \sec^2(\nu_2)\frac{\partial \nu_2}{\partial r} - \sec^2(\nu_1)\frac{\partial \nu_1}{\partial r} \\
  \frac{\partial^3
    g(s,r)}{\partial r^2\partial s} & = & \cos(\theta_2)\frac{\partial \theta_2}{\partial r} - \cos(\theta_1)\frac{\partial \theta_1}{\partial r} 
  - \sin(\theta_2)\frac{\partial \theta_2}{\partial r}\tan(\nu_2) \\ & + &
  \sin(\theta_1)\frac{\partial \theta_1}{\partial r}\tan(\nu_1) + \cos(\theta_2)\sec^2(\nu_2)\frac{\partial \nu_2}{\partial r} -
  \cos(\theta_1)\sec^2(\nu_1)\frac{\partial \nu_1}{\partial r}.
\end{eqnarray*} 
Since $\nu_2'\equiv\frac{\partial \nu_2}{\partial r}$ and
$\nu_1'\equiv\frac{\partial \nu_1}{\partial r}$ are the only unknowns,
we end up having to invert
\begin{equation*}
  \left[
    \begin{array}{cc}
      1 & -1 \\
     \cos(\theta_2) & \cos(\theta_1)
    \end{array}
\right]
\end{equation*} 
again and this is always nonsingular, giving us $\nu_1'$ and $\nu_2'$
as a function of s, the coordinate of the center of the disk.

Relative to the horizontal, the angle of the curve at $s^+(s)$ is $\theta_1 + \nu_1$ so the rate of change in angle as we expand the circle is $\pd{\theta_1}{r}+\nu_1'$.
Recalling that rate of movement of this exit point as we expand the circle is given by $\pd{k}{r} = \sec \nu_1$, we have that the curvature is given by $\kappa(s^+(s))= \pd{k}{r}(\pd{\theta_1}{r}+\nu_1') = \sec \nu_1(\pd{\theta_1}{r}+\nu_1')$.
Similarly, $\kappa(s^-(s)) = \sec(\nu_2)(\pd{\theta_2}{r}+\nu_2')$.
\epfclm
\begin{clm}
  Generically, we can deduce $s^+(s)$ from knowledge of $\nu_1(s)$,
  $\nu_2(s)$, $\theta_1(s)$ and $\theta_2(s)$.
\end{clm}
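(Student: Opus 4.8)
The plan is to exploit a reflection built into the setup: the chord from $\gm{s}$ to $\gm{s^{+}(s)}$ is literally the same chord whether it is read off the disk $\disk{\gm{s}}{\hat r}$, in which $\gm{s^{+}(s)}$ is the right-hand exit point, or off the disk $\disk{\gm{s^{+}(s)}}{\hat r}$, in which $\gm{s}$ is the left-hand entry point. (That $s^{-}(s^{+}(s))=s$ follows from the two-arc property of Theorem~\ref{thm:two-intersections} together with $\nu_1\in(-\tfrac{\pi}{2},\tfrac{\pi}{2})$, since $\langle\gm{s}-\gm{s^{+}(s)},\dgm{s^{+}(s)}\rangle=-\hat r\cos\nu_1(s)<0$ places $\gm{s}$ on the left side of the tangent frame at $\gm{s^{+}(s)}$.) First I would read off, from the conventions of Figure~\ref{fig:notation}, the geometric identity
\[
\theta_2(s^{+}(s))=\pi-\nu_1(s):
\]
rotating into the tangent frame at $\gm{s^{+}(s)}$, the entry point $\gm{s}$ sits at angle $\theta_2(s^{+}(s))$ by the definition of $\theta_2$, while $\nu_1(s)$ is precisely the angle the reversed chord makes with $\dgm{s^{+}(s)}$; a circle of radius $R$, where $\theta_1=\nu_1=\arcsin(\hat r/2R)$ and $\theta_2=\pi-\theta_1$, checks it. Symmetrically $\theta_1(s^{-}(s))=-\nu_2(s)$, and since $s^{-}=(s^{+})^{-1}$ it is enough to recover $s^{+}$, with $\theta_1,\nu_2$ then available as a consistency check.

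Reading the identity as $\nu_1=(\pi-\theta_2)\circ s^{+}$ shows that $s^{+}$ is exactly the orientation-preserving homeomorphism of the parameter circle that conjugates the known function $\pi-\theta_2$ to the known function $\nu_1$. To pin it down I would first differentiate the constraint $\|\gm{s^{+}(s)}-\gm{s}\|^{2}=\hat r^{2}$; using $\langle\gm{s^{+}(s)}-\gm{s},\dgm{s}\rangle=\hat r\cos\theta_1(s)$ and $\langle\gm{s^{+}(s)}-\gm{s},\dgm{s^{+}(s)}\rangle=\hat r\cos\nu_1(s)$, this gives
\[
\frac{d\,s^{+}}{ds}=\frac{\cos\theta_1(s)}{\cos\nu_1(s)}>0,
\]
so $s^{+}$ is determined up to the single additive constant $s^{+}(0)$. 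I would fix that constant from the displayed identity at a parameter $s^{\ast}$ where $\nu_1$ is maximal: since $s^{+}$ is a bijection, $\nu_1(s^{\ast})=\max_u(\pi-\theta_2)(u)=\pi-\min_u\theta_2(u)$, hence $\theta_2(s^{+}(s^{\ast}))=\min_u\theta_2(u)$, and if $\theta_2$ attains its minimum at a unique parameter $u^{\ast}$ this forces $s^{+}(s^{\ast})=u^{\ast}$, whence $s^{+}(0)=u^{\ast}-\int_{0}^{s^{\ast}}\frac{\cos\theta_1}{\cos\nu_1}\,dt$ (mod $L$). Equivalently, without the ODE: if $\nu_1$, hence also $\pi-\theta_2$, is Morse with pairwise distinct critical values, one matches critical points by their common ordered list of critical values and recovers $s^{+}$ from strict monotonicity on each arc between consecutive critical points.

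So the only genericity actually invoked is the mild one that $\nu_1$ (equivalently $\theta_2$) attain its extremum at a unique parameter --- this is the ``unique maxima and minima in the exit angle sequences'' mentioned in the introduction, and it is exactly why the eventual theorem is only a density statement. The hard part will therefore be the transversality step: showing that the curves for which this holds are $C^{1}$ dense in $\Bbb{G}$. The delicate point is that $\nu_1(s)$ depends on $\gamma$ partly nonlocally, through the position of $\gm{s^{+}(s)}$; but a $C^{2}$ perturbation supported in a small neighborhood of a chosen boundary point $\gm{u}$ alters $\gm{u}$, $\dgm{u}$ and the curvature at $\gm{u}$, and hence moves $\nu_1$ and its $s$-derivatives, at the unique $s$ with $s^{+}(s)=u$, in independent directions; a parametric transversality argument should then yield that for generic $\gamma$ the function $\nu_1$ is Morse with distinct critical values, in particular with a uniquely attained maximum. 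One also has to keep the perturbations inside $\Bbb{G}$, which is handled by noting that a curve TGL for $\hat r$ becomes TGL for $\hat r$ with a positive margin after rescaling by a factor slightly larger than $1$ --- a $C^{1}$-small change that does not affect reconstructibility --- so we may assume from the start that $\gamma$ is strictly TGL for $\hat r$, after which all sufficiently small $C^{2}$ perturbations remain in $\Bbb{G}$.
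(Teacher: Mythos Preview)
Your argument is correct and takes a genuinely different route from the paper's. The paper matches the \emph{turning angle} $\theta_1(s)+\nu_1(s)$ against $\pi-\theta_2(s^{+}(s))-\nu_2(s^{+}(s))$ (the identity that the tangent rotation between $\gm{s}$ and $\gm{s^{+}(s)}$ is the same read from either disk), whereas you match the simpler \emph{chord angle} identity $\theta_2\circ s^{+}=\pi-\nu_1$. Both identities are valid; yours is sharper in that it involves fewer of the four known functions. More importantly, you bring in the ODE $\tfrac{ds^{+}}{ds}=\cos\theta_1/\cos\nu_1$, which reduces the unknown homeomorphism to a single constant; the paper instead reconstructs $s^{+}$ purely by matching critical points of the two $\theta+\nu$ curves, which forces it to assume the stronger generic condition that the turning-angle function be monotone between isolated critical points in addition to having a unique global extremum. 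Your genericity step is also handled differently: the paper gives an explicit $C^{1}$-small perturbation scheme (alternating oscillations of controlled second and third derivative on arcs of chord length $\hat r$, followed by a local ``twist'' at a nonunique maximum), while you sketch a parametric transversality argument after observing that TGL for $\hat r$ can be made strict by a $C^{1}$-small rescaling. The paper's construction is more hands-on and makes the density statement concrete; your approach is cleaner and requires less of the perturbed curve, at the cost of leaving the transversality verification as an outline.
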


\prff We outline the proof without some of the explicit
constructions that follow without much trouble from the outline. We
have that $\theta_1(s^-(s)) + \nu_1(s^-(s))= \pi - \theta_2(s) -
\nu_2(s)$ and $\theta_1(s) + \nu_1(s) = \pi - \theta_2(s^+(s)) -
\nu_2(s^+(s))$. All four of these quantities (the left- and right-hand sides of each of
the 2 equations) are the turning angles between the tangent to the
curve at the center of the disk and the tangent to the curve at a
point $r$ away from the center of the disk.

Now we use this correspondence between the $\theta+\nu$ curves to
solve for $s^-(s)$ and $s^+(s)$. But these curves can differ by a
homeomorphism of the domain. Thus, we can only find the correspondence
if there is a distinguished point on those curves as well as no places
where the values attained are constant. The turning angle curves
having isolated critical points and a unique maximum or minimum is
sufficient for our purposes.

To get isolated extrema, start by approximating the curve $\gamma$
with another one, $\hat{\gamma}$, that agrees in $C^1$ at a large but
finite number of points $\{s_i\}_{i=1}^N$ (i.e. agrees in tangent
direction as well as position) and has isolated critical points in
the derivative of the tangent direction. Now perturb $\hat{\gamma}$ to
one that is $C^1$ close (but not $C^2$ close) by using oscillations
about the curve so that the 2nd and 3rd derivatives are never
simultaneously below the bounds on the 2nd and 3rd derivatives of the
curve we started with. We do this in a way that alternates around the
curve. See Figure~\ref{fig:corre_perturb}.  In a bit more detail,
suppose that $\max\{d^2\hat{\gamma}/ds^2, d^3\hat{\gamma}/ds^3\} <
L_1$. Choose a starting point on the curve; $s=0$ works. Now begin
perturbing $\hat{\gamma}$ at the point $s_{\hat{r}}$ in the positive
$s$ direction such that $|\hat{\gamma}(s_{\hat{r}}) - \hat{\gamma}(0)|
= \hat{r}$. We name the newly perturbed curve $\hat{\hat{\gamma}}$ and
we keep $L_1 < \max\{ d^2\hat{\hat{\gamma}}/ds^2,
d^3\hat{\hat{\gamma}}/ds^3 \} < L_2$.  We continue perturbing until we
have reached $s_{2r}$ defined by $|\hat{\gamma}(s_{2\hat{r}}) -
\hat{\gamma}(s_{\hat{r}})| = \hat{r}$. We begin perturbing again when
we reach $s_{3\hat{r}}$. Continue in this fashion around
$\hat{\gamma}$. The last piece, shown in green in the figure, will
require a perturbation that is distinct in size due to the fact that
it will interact with the perturbation that starts at $s_{\hat{r}}$.
On this last piece, we enforce $L_2 < \max\{
d^2\hat{\hat{\gamma}}/ds^2, d^3\hat{\hat{\gamma}}/ds^3 \} < L_3$. All
these perturbations can be chosen with isolated singularities in
derivatives, thus giving us $\theta +\nu$ curves that are monotonic
between isolated singularities.  (In fact, we might as well choose all
perturbations to be piecewise polynomial perturbations. This
immediately gives us the isolated singularities and monotonicity that
we want.)

Finally, if there is not a distinct maximum, we can choose one of the
maxima and add a small twist to the curve at that point. See
Figure~\ref{fig:twist_perturb}. The idea is that a small twist,
applied to the leading edge of the tangents we are comparing to get the
turning angle, will increase the angle most at the center of the
twist. If this corresponds to a nonunique global maximum, we end up
with a unique global maximum.

\begin{figure}[htp!]
  \centering
  \input{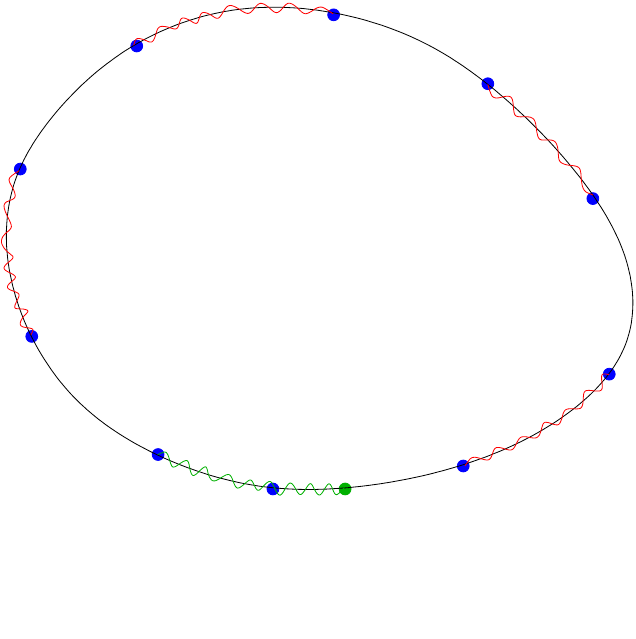_t}
  \caption{In this schematic figure, we illustrate the alternating
    perturbation around the curve, keeping the curve $C^1$ close to
    and messing with the second and third derivatives to eliminate any
    critical points other than isolated maxima and minima. Here the
    perturbation is of course greatly exaggerated. }
 \label{fig:corre_perturb}
\end{figure}

\begin{figure}[htp!]
  \centering
  \input{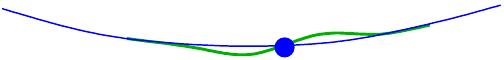_t}
  \caption{A twist perturbation. Notice that if the twist is applied
    precisely at a global max of the turning angle (as measured by the
    tangent here and the one lagging it in $s$), we will increase the
    turning angle there and will end up with a unique global maximum.
  }
 \label{fig:twist_perturb}
\end{figure}

Now the correspondence scheme works. That is, we know that the global
maximums must match, and because the turning angle curves are
monotonic between isolated critical points, we can find the
homeomorphisms in $s$ that move the turning angle curves into
correspondence.  
\epfclm

Taken together, the last two claims give us the curvature as a
function of arclength. This determines $\gamma$ up to translations and
rotations.  \epf

\section{Numerical experiments}
\label{sec:numerics}

In this section, we consider a numerical curve reconstruction for the situation in which $g(s,r)$ is known for a given radius $r$ but no derivative information is available.  This reconstruction is more strict than the scenarios of sections \ref{sec:reconstruct-T}--\ref{sec:generic}.  Our motivation is to explore whether any $\gamma$ can be uniquely and practically reconstructed with this limited information.  

We consider $\gamma_a(\bar{s})\in \mathcal{P}^N$, the set of \textit{simple} polygons of $N$ ordered vertices $\{(x_1,y_1), \dots, (x_{N},y_{N})\}$ parameterized by the set $\{\bar{s}_k\}_{k=1}^{N}$ with $\bar{s}_k=k/N$ as 
\begin{equation}
\begin{array}{l}
 \displaystyle x_k = \sum_{j=0}^{m-1}{a_{1,j} \cos(2\pi j \bar{s}_k/N)+a_{2,j} \sin(2\pi j \bar{s}_k/N)},\\
 \displaystyle y_k = \sum_{j=0}^{m-1}{a_{3,j} \cos(2\pi j \bar{s}_k/N)+a_{4,j} \sin(2\pi j \bar{s}_k/N)},
\end{array}
\end{equation}
for some coefficients $a_{i,j}\in\mathbb{R}$.  In this way, the polygon $\gamma$ is a discrete approximation of a $C^{\infty}$ curve.  The sides of $\gamma_a(\bar{s})$ are not necessarily of equal length.

We take the vector signature $g_a(\bar{s},r)\in\mathbb{R}^N$ to be the discrete area densities of $\gamma_a(\bar{s})$ computed at each vertex.  Given such a signature for fixed radius $r$ and fixed partition $\bar{s}$, we seek $a^*$ satisfying
\begin{equation}
\label{equ:numopt}
\begin{array}{rl}
 \displaystyle  a^* \in &  \displaystyle \arg\min_{b\in\mathbb{R}^{4m}}\|g_b(\bar{s},r)-g_a(\bar{s},r)\|_2^2 \\
  & \\
  &  \displaystyle \text{s.t.  } \gamma_b\in\mathcal{P}^N
\end{array}
\end{equation}
Equation (\ref{equ:numopt}) represents a nonlinearly constrained optimization problem with continuous nonsmooth objective.  The constraint ensures that polygons are simple though any optimal reconstruction $\gamma_{a^*}$ is not expected to lie on the feasible region boundary except in cases of noisy signatures.  This approach to reconstructing curves seeks a polygon that matches a given discrete signature, rather than an analytic sequential point construction procedure.

We use the direct search \textsc{OrthoMads} algorithm\cite{AbAuDeLe2009} to solve this problem.  \textsc{Mads} class algorithms do not require objective derivative information\cite{AbAuDeLe2009,AuDe2006} and converge to second-order stationary points under reasonable conditions on nonsmooth functions\cite{AbAu2006}.  We implement our constraint using the extreme barrier method\cite{AuDeLe2009} in which the objective value is set to infinity whenever constraints are not satisfied. 
We utilize the standard implementation with partial polling and minimal spanning sets of $4m+1$ directions.

\begin{figure}[h!]
\centering
\includegraphics[width=.6\textwidth]{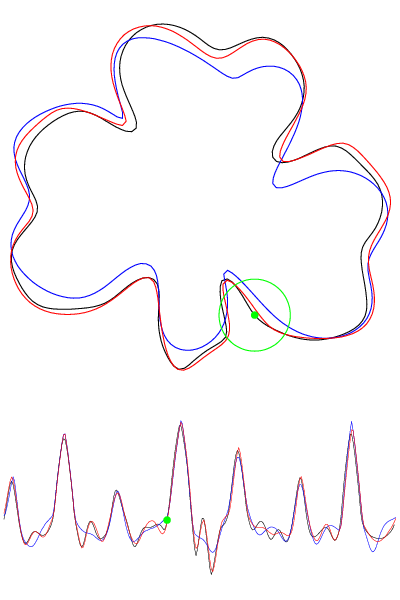}
\caption{Shamrock reconstruction: comparing the original curve with those found for {\color{blue}$m = 12$} and {\color{red}$m=18$}.  Curves for $m \geq 20$ are visually indistinguishable from the original curve.  The shape signatures are given at the bottom.}
\label{fig:shamrock}
\end{figure}

We performed a series of numerical tests using the synthetic shamrock curve shown in black in the upper portion of Figure~\ref{fig:shamrock}.  This curve is given as a polygon in $\mathcal{P}^{256}$ with discretization coefficients $a\in\mathbb{R}^{4\times 20}$ ($m=20$).  A sequence of reconstructions was performed with all integer values $8\leq m \leq 20$.  The $m=8$ reconstruction begins with initial coefficients, $a_{i,j}$, which determine a regular $256$-gon with approximately the same interior area as the shamrock (as determined by the signature $g_a(\bar{s},r)$.
In particular, the value(s) $a_{i,j}$ supplied initially are those which define the best fit circle ($m=1$),
which can be computed directly.
That is, only $a_{1,0}$ and $a_{4,0}$ are nonzero.
Subsequent reconstructions begin with initial coefficients optimal to the previous relatively coarse reconstruction.  Curve reconstructions for $m=12$ (blue) and $m=18$ (red) are compared to the shamrock in the upper portion of Figure~\ref{fig:shamrock}.   Reconstructions for $m\geq20$ are visually indistinguishable from the actual curve and are not shown.  Corresponding area density signatures are shown in the lower portion of Figure~\ref{fig:shamrock}.  A representative disk of radius $r$ is shown in green along with corresponding location in the signature; note that the shamrock is not \tcgl{} with this radius.

When comparing and interpreting the shamrock curves, it is important to note that the scale of the curves is determined entirely by the fit parameters $a_{i,j}$.  On the other hand, as the density signature is independent of curve rotation, the rotation is eyeball adjusted for easy visual comparison.  
Also note that the two-arc property does not hold for this example so our reconstructability results do not apply.
The accuracies of both the curve reconstruction and area density signature fit suggest that somewhat more general reconstructability results hold.
In particular, we speculate that general simple polygons may be reconstructible from $g(s,r)$ for fixed $r$ and no derivative information.


\section{Conclusions}
We have studied the integral area invariant with particular emphasis on the \tcgl{} condition.
In particular, we have shown that all TCGL polygons and a $C^1$-dense set of $C^2$ TGL curves are reconstructible using only the integral area invariant for a fixed radius along the boundary and its derivatives.

We also showed that TCGL boundaries can be approximated by TCGL polygons, determined what the derivatives represented, and commented on other sets of data sufficient for reconstruction (namely, both T-like and all radii in a neighborhood of 0).

These reconstructions are all modulo translations, rotations, and reparametrizations.
The arc length parameterization plays a special role here since any two such parameterizations of a boundary will differ only by a shift and can easily be placed into correspondence.
The situation becomes more complicated in higher dimensions as boundaries are no longer canonically parameterized by a single variable which is a fundamental assumption of our results and methods.
It is not immediately obvious how to resolve the issues created by higher dimensions except that it may be possible to modify some of the machinery to work with star convex regions which restore some semblance of canonical representation.

Another space which is open for further development is that of reconstruction algorithms.
This is doubly true since our theoretical reconstructions are unstable and the numerical examples in the present work do not have guaranteed reconstruction.
However, even without these guarantees, the numerical examples hint at more expansive reconstructability results.

\section{Acknowledgments}
The authors would like to thank David Caraballo for introducing us to this topic as well as Simon Morgan and William Meyerson for initial discussions and work on related topics that are not in this paper.
This research was supported in part by National Science Foundation grant DMS-0914809.

\appendix
\section{Appendix: Easy Reconstructability}
\label{sec:appen}

For completeness, we include a short proof of the fact that knowing
$g(s,r)$ for all $s$ and $r$ very easily gives us reconstructability.
This follows from the fact that knowing the asymptotic behavior of
$g(s,r)$ as $r\rightarrow 0$ for any $s$ gives us $\kappa(s)$. That in
turn implies that knowing $g(s,r)$ in any neighborhood of the set
$(s,r)\in [0,L] \times \{r=0\}$ also gives us $\kappa(s)$ and
therefore the curve.

\begin{figure}[htp!]
\centering
\subfigure[]{
\label{fig:pos-osculating-circle}
\begin{tikzpicture}[scale=0.7]
\fill[color=teal!40] (0,3) circle (3);
\draw[color=black] (0,3) circle (3);
\draw[color=blue] (0,0) circle (2);
\draw[ultra thick] plot[smooth] coordinates {(195:3) (190:2.5) (185:2) (180:1.5) (175:1) (175:0.5) (0,0) (5:0.5) (10:1) (15:1.8) (20:2.5) (25:3) (30:3.5) (35:4)};
\node () at (25:4) {$\bd$};
\draw[<->] (0:0) -- (90:3) node[pos=0.5,left] {$R$};
\draw[color=blue,<->] (0:0) -- (45:2) node[pos=0.5,left] {$r$};
\fill[color=red] (0,0) circle (2pt);
\node[color=red] () at (0,-.5) {$\gamma(s)$};
\end{tikzpicture}}
\subfigure[]{
\label{fig:neg-osculating-circle}
\begin{tikzpicture}[scale=0.7,rotate=180]
\fill[color=teal!40] (0,0) circle (2);
\fill[color=white] (0,3) circle (3);
\draw[color=black] (0,3) circle (3);
\draw[color=blue] (0,0) circle (2);
\draw[ultra thick] plot[smooth] coordinates {(195:3) (190:2.5) (185:2) (180:1.5) (175:1) (175:0.5) (0,0) (5:0.5) (10:1) (15:1.8) (20:2.5) (25:3) (30:3.5) (35:4)};
\node () at (25:4) {$\bd$};
\draw[<->] (0:0) -- (90:3) node[pos=0.5,left] {$R$};
\draw[color=blue,<->] (0:0) -- (45:2) node[pos=0.5,left] {$r$};
\fill[color=red] (0,0) circle (2pt);
\node[color=red] () at (0,-.5) {$\gamma(s)$};
\end{tikzpicture}
}
\caption{Using the osculating circle as a surrogate for $\bd$ in the \subref{fig:pos-osculating-circle} positive and \subref{fig:neg-osculating-circle} negative curvature cases.}
\label{fig:osculating-circle}
\end{figure}
\begin{thm}
Suppose $\bd$ is $C^2$ and there exists $\epsilon > 0$ such that we know $g(s,r)$ for all $(s,r) \in [0,L) \times (0,\epsilon)$.
This information is enough to determine the curvature of every point on $\bd$.
In particular, if $\gamma : [0,L) \rightarrow \bd$ is a counterclockwise arclength parameterization of $\bd$, then $\kappa(\gamma(s)) = -3\pi \lim_{r\rightarrow 0} \pd{}{r} \frac{g(s,r)}{\pi r^2}$.
\begin{proof}
Fix $s \in [0,L)$.
If the curvature of $\gamma$ at $s$ is positive, we consider what happens if we replace $\shape$ with the disk whose boundary is the osculating circle of $\bd$ at $\gamma(s)$ (call its radius $R$).
We have the following expression for the new normalized nonasymptotic density (see Figure \ref{fig:pos-osculating-circle}):
\[
\frac{g(s,r)}{\pi r^2} = \frac{1}{\pi r^2}\int_{-p}^{p} \sqrt{r^2-x^2}-(R-\sqrt{R^2-x^2})\, dx.
\]
where $x = p$ is the positive solution to $\sqrt{r^2-x^2} = R-\sqrt{R^2-x^2}$.
Differentiating with respect to $r$ and then taking the limit as $r$ goes to 0 gives us $-\frac{1}{3\pi R}$.
That is, for the case where $\shape$ is locally a disk, the curvature at $\gamma(s)$ is given by $-3\pi\lim_{r\rightarrow 0}\pd{}{r}\frac{g(s,r)}{\pi r^2}$.

If the curvature of $\bd$ at $\gamma(s)$ is negative, we can set up a similar surrogate (see figure \ref{fig:neg-osculating-circle}) and again obtain that $\kappa(\gamma(s)) = -3\pi \lim_{r\rightarrow 0} \pd{}{r} \frac{g(s,r)}{\pi r^2}$.

Lastly, this calculation gives the right result in the curvature 0 case when $\bd$ is locally a straight line (so $\frac{g(s,r)}{\pi r^2} = \frac{1}{\pi r^2}\int_{-r}^{r}\sqrt{r^2-x^2}\,dx = \frac{1}{2}$ for sufficiently small $r$ and $-3\pi\lim_{r\rightarrow 0} \pd{}{r} \frac{g(s,r)}{\pi r^2} = 0$).

For the case where $\bd$ is not locally a circle or straight line, the corrections to the integrals are of order $O(x^3)$ as $r$ goes to 0 and have no impact on the final answer so the curvature at $\gamma(s)$ is always given by $-3\pi\lim_{r\rightarrow 0}\pd{}{r}\frac{g(s,r)}{\pi r^2}$.
The available data (the values $g(s,r)$ for all $s \in [0,L)$ and all $r \in (0, \epsilon)$) are sufficient to compute the relevant derivative and limit so we can use this process to determine the curvature of every point on the $C^2$ curve $\bd$.
\end{proof}
\end{thm}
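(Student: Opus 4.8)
The strategy is to reduce the recovery of $\kappa$ at a point to a one-variable integral in which $\bd$ is replaced near that point by its osculating circle, to evaluate the model integral exactly, and then to show that the replacement error is of high enough order not to survive the limit.

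Fix $s$ and normalize coordinates so that $\gamma(s)$ is the origin, the tangent to $\bd$ there is the $x$-axis, and $\shape$ lies locally in the upper half-plane. Since $\bd$ is $C^2$, there is a neighborhood of the origin in which $\bd$ is the graph $y=f(x)$ of a $C^2$ function with $f(0)=0$, $f'(0)=0$, and (the counterclockwise convention fixing the sign) $f''(0)=\kappa(\gamma(s))$; moreover the rest of $\bd$ is at positive distance from $\gamma(s)$. Hence for all sufficiently small $r$ the disk $\disk{\gamma(s)}{r}$ meets $\bd$ in exactly this single graph arc, crossing $\cir{\gamma(s)}{r}$ transversely at two points $(\pm p(r),f(\pm p(r)))$ with $p(r)$ depending $C^1$ on $r$ and $p(r)\to 0$; in the convex case $\disk{\gamma(s)}{r}\cap\shape=\{(x,y):|x|\le p(r),\ f(x)\le y\le\sqrt{r^2-x^2}\}$ and
\[
\frac{g(s,r)}{\pi r^2}=\frac{1}{\pi r^2}\int_{-p(r)}^{p(r)}\Bigl(\sqrt{r^2-x^2}-f(x)\Bigr)\,dx .
\]
(For $\kappa(\gamma(s))<0$ the lower boundary of the region alternates between the graph and the lower circular arc; I would sidestep that bookkeeping by reducing to the convex case via complementation, as below.)

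Next I would evaluate the model. Replacing $f$ by the lower arc $x\mapsto R-\sqrt{R^2-x^2}$ of the osculating circle of radius $R=1/\kappa(\gamma(s))$ centered at $(0,R)$ turns the display into an elementary integral (Figure~\ref{fig:pos-osculating-circle}); computing that integral, the root $p(r)$ of $\sqrt{r^2-x^2}=R-\sqrt{R^2-x^2}$, and then $\pd{}{r}\frac{g(s,r)}{\pi r^2}$ yields the value $-\frac{1}{3\pi R}$ in the limit $r\downarrow 0$, i.e. $\kappa(\gamma(s))=-3\pi\lim_{r\downarrow0}\pd{}{r}\frac{g(s,r)}{\pi r^2}$ when $\shape$ is locally a disk. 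The flat case $\kappa(\gamma(s))=0$ is immediate, since then $\frac{g(s,r)}{\pi r^2}\equiv\tfrac12$ for small $r$. The negative-curvature case follows from the positive one by the observation that replacing $\shape$ with the closure of its complement sends $g(s,r)\mapsto\pi r^2-g(s,r)$ and $\kappa(\gamma(s))\mapsto-\kappa(\gamma(s))$, and the claimed formula is preserved by this involution.

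The substance of the argument is the error estimate showing that the \emph{true} $C^2$ curve gives the same limit as its osculating circle. By the Peano form of Taylor's theorem, $f(x)=\tfrac12\kappa(\gamma(s))x^2+R_f(x)$ with $R_f(x)=o(x^2)$ uniformly on shrinking intervals, while the osculating arc has the same quadratic part and differs from it by $O(x^4)$; substituting into the display gives $\frac{g(s,r)}{\pi r^2}=\tfrac12-\frac{\kappa(\gamma(s))}{3\pi}r+E(r)$ with $E(r)=o(r)$. The delicate point is that an $o(r)$ bound on $E$ does not control $E'$, so to justify $\lim_{r\downarrow0}\pd{}{r}\frac{g(s,r)}{\pi r^2}=-\frac{\kappa(\gamma(s))}{3\pi}$ I would differentiate the integral for $g(s,r)$ directly by the Leibniz rule — the endpoint terms vanish because the integrand is $0$ at $x=\pm p(r)$ — obtaining $g_r(s,r)=2r\arcsin\!\bigl(p(r)/r\bigr)$, and then feed the estimates $\sqrt{r^2-p(r)^2}=f(p(r))=\tfrac12\kappa(\gamma(s))r^2+o(r^2)$ (hence $1-p(r)/r=\tfrac18\kappa(\gamma(s))^2r^2+o(r^2)$) and $\int_{-p(r)}^{p(r)} f=\tfrac13\kappa(\gamma(s))r^3+o(r^3)$ into $\pd{}{r}\frac{g(s,r)}{\pi r^2}=\frac{g_r(s,r)}{\pi r^2}-\frac{2g(s,r)}{\pi r^3}$; the two singular $\tfrac1r$ contributions cancel and what remains tends to $-\frac{\kappa(\gamma(s))}{3\pi}$. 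I expect this asymptotic bookkeeping — in particular checking that every $o(\cdot)$ remainder survives division by the appropriate power of $r$ as a genuine $o(1)$, which rests on the uniformity of the Taylor remainder afforded by continuity of $f''$ on a compact neighborhood of $0$ — to be the main obstacle; the rest is routine. Finally, since the formula expresses $\kappa(\gamma(s))$ purely in terms of the known function $r\mapsto g(s,r)/\pi r^2$ on $(0,\epsilon)$ and this holds for every $s\in[0,L)$, the curvature of $\bd$ is determined everywhere, which is the assertion.
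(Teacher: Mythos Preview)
Your proof is correct and follows the same line as the paper's: replace $\bd$ locally by its osculating circle, compute the model integral, and argue that the higher-order remainder does not affect the limit. Your treatment is in fact more careful in two places where the paper is terse---you handle negative curvature by the complementation $g\mapsto\pi r^2-g$, $\kappa\mapsto-\kappa$ rather than a separate surrogate picture, and you explicitly confront the issue that an $o(r)$ bound on $E$ does not automatically control $E'$ by differentiating $g$ directly via the Leibniz rule (recovering $g_r=2r\arcsin(p(r)/r)$, which is the arclength formula of Lemma~\ref{lem:dr}) and tracking the cancellation of the $1/r$ singularities.
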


\bibliographystyle{plain}
\bibliography{geo}

\end{document}